\theoremstyle{definition}
\newtheorem{theorem}{Theorem}[section]
\newtheorem{lemma}[theorem]{Lemma}
\newtheorem{corollary}[theorem]{Corollary}
\newtheorem{question}[theorem]{Question}
\newtheorem{conjecture}[theorem]{Conjecture}
\newtheorem{proposition}[theorem]{Proposition}
\newtheorem{definition}[theorem]{Definition}
\newtheorem*{theorem*}{Theorem}
\newtheorem*{conjecture*}{Conjecture}
\newtheorem*{question*}{Question}
\newtheorem*{lemma*}{Lemma}
\newtheorem*{corollary*}{Corollary}
\newtheorem*{definition*}{Definition}
\newtheorem*{example*}{Example}
\newcommand{\lcm}{\text{lcm}}
\title{Connecting Zeros in Pisano Periods to Prime Factors of $K$-Fibonacci Numbers}
\author[Brennan Benfield]{Brennan Benfield}
\address{Brennan Benfield: Department of Mathematics and Statistics, University of North Carolina at Charlotte, 9201 University City Blvd., Charlotte, NC 28223, USA}
\email{bbenfie3@charlotte.edu}
\author[Oliver Lippard]{Oliver Lippard}
\address{Oliver Lippard: Department of Mathematics and Statistics, University of North Carolina at Charlotte, 9201 University City Blvd., Charlotte, NC 28223, USA}
\email{hlippard@charlotte.edu}
\subjclass[2020]{11B39, 11B50}
\keywords{Fibonacci numbers, Recurrence sequences}
\pgfplotsset{compat=1.18}
\begin{document}

\begin{abstract}
The Fibonacci sequence is periodic modulo every positive integer $m>1$, and perhaps more surprisingly, each period has exactly 1, 2, or 4 zeros that are evenly spaced, which also holds true for more general $K$-Fibonacci sequences. This paper proves several conjectures connecting the zeros in the Pisano period to the prime factors of $K$-Fibonacci numbers.  The congruence classes of indices for $K$-Fibonacci numbers that are multiples of the prime factors of $m$ completely determine the number of zeros in the Pisano period modulo $m$.
\end{abstract}

\maketitle

\section{Introduction}
The Fibonacci sequence is defined by $F_0=0$, $F_1=1$, and $F_n=F_{n-1}+F_{n-2}$. A curious property was first recognized in 1877 by Lagrange \cite{Lagrange}: the terms in the Fibonacci sequence modulo $10$ (i.e. the one's place digits) repeat every $60$ terms. Inquiry regarding the periodicity of the sequence modulo a positive integer has continued and in 2004 it was proven by Everest \& Shparlinski \cite{[Everest]} that \textit{every} binary recurrence sequence is periodic modulo a positive integer $m>1$. 

Define the \textit{Pisano period} as the length of one (shortest) period of the Fibonacci sequence modulo $m$, denoted $\pi(m)$. It is well established \cite{Gupta, Renault, Wall} that one period of the Fibonacci sequence modulo any $m>1$ has exactly 1, 2, or 4 zeros, equally spaced in the cycle. The number of zeros in a Pisano period is the \textit{order} of $m$, denoted $\omega(m)$. The \textit{rank} of $m$ is the index of the Fibonacci number of the first zero in a Pisano period. In other words, the rank, denoted $\alpha(m)$, is the index of the first Fibonacci number divisible by $m$. There are two conjectures in the OEIS: A053029 \cite{[A053029]} and A053031 \cite{[A053031]} regarding exactly which $m$ has order 1, 2, or 4. Note that the Fibonacci sequence mod 1 is traditionally not considered and $\omega(1)$ is defined to equal 1.
\begin{conjecture}[A053029 \cite{[A053029]}]\label{conj:oeis4}
    An integer $m$ has four zeros in its Pisano period if and only if $m$ is an odd number, all of whose factors have four zeros in their Pisano period, or if $m$ is twice such a number.
\end{conjecture}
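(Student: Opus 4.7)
My approach converts the four--zero condition $\omega(m)=4$ into a parity statement about the rank $\alpha(m)$, and then propagates that parity through the prime factorization of $m$.

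\emph{Step 1: $\omega(m)=4$ iff $\alpha(m)$ is odd, for $m>2$.} From the Cassini--Catalan identity $F_{n+1}F_{n-1}-F_n^{\,2}=(-1)^n$ combined with $F_{\alpha(m)+1}\equiv F_{\alpha(m)-1}\pmod{m}$ (which follows from $F_{\alpha(m)}\equiv 0$), I derive
\[
F_{\alpha(m)+1}^{\,2}\equiv(-1)^{\alpha(m)}\pmod{m}.
\]
The convolution identity $F_{n+r}=F_{r+1}F_n+F_rF_{n-1}$ at $r=\alpha(m)$ shows that shifting by $\alpha(m)$ multiplies the Fibonacci sequence modulo $m$ by the unit $F_{\alpha(m)+1}$, so $\omega(m)$ equals the multiplicative order of $F_{\alpha(m)+1}$ in $(\ZZ/m\ZZ)^\times$. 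For $m>2$ one has $-1\not\equiv 1\pmod{m}$, so $\alpha(m)$ odd forces that order to equal $4$, while $\alpha(m)$ even forces it to divide $2$.

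\emph{Step 2: Parity of $\alpha(m)$ from the prime factorization.} The classical equivalence $m\mid F_n\iff\alpha(m)\mid n$ implies $\alpha(\lcm(a,b))=\lcm(\alpha(a),\alpha(b))$, so for $m=\prod p_i^{e_i}$ I obtain $\alpha(m)=\lcm_i\alpha(p_i^{e_i})$, which is odd iff each $\alpha(p_i^{e_i})$ is odd. The prime-power rank formula $\alpha(p^e)=p^{\max(0,e-s)}\alpha(p)$, where $s=v_p(F_{\alpha(p)})$, shows that for odd $p$ the parity of $\alpha(p^e)$ matches that of $\alpha(p)$; for $p=2$ a direct computation gives $\alpha(2)=3$, $\alpha(4)=6$, and $\alpha(2^e)=3\cdot 2^{e-1}$ for $e\ge 2$, so $\alpha(2^e)$ is odd iff $e=1$.

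\emph{Step 3: Assembly.} Putting Steps 1 and 2 together, $\omega(m)=4$ is equivalent to $4\nmid m$ together with $\alpha(p)$ odd for every odd prime $p\mid m$, which by Step 1 at primes is exactly $\omega(p)=4$. Thus $\omega(m)=4$ iff $m$ is either odd or twice an odd number and every odd prime factor has order four. The conjecture's wording refers to ``all factors''; the lcm argument from Step 2 propagates the parity condition to every odd divisor of $m$, so the upgrade from ``prime factors'' to ``factors'' is immediate and introduces no circularity.

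The main obstacle is establishing the prime-power rank formula $\alpha(p^e)=p^{\max(0,e-s)}\alpha(p)$. A careful $p$-adic lifting-the-exponent argument applied to $F_{\alpha(p)\,n}$ handles this, but the case $p=2$ is delicate because $s(2)\ne 1$, forcing the small exponents $e=1,2$ to be treated separately by inspection. Fortunately, only the parity-inheritance corollary is needed for odd $p$, which sidesteps any dependence on the open Wall--Sun--Sun prime question of whether $s(p)>1$ can occur.
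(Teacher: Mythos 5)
Your proof is correct in substance but takes a genuinely different route from the paper's. The paper settles this conjecture (after reformulating it as Theorems \ref{new_conj_1} and \ref{new_conj_2}) essentially by citation: it reads the answer off Renault's multiplication table for $\omega(\lcm[m,n])$ (Theorem \ref{thm:Renault}) together with Vinson's values $\omega(2)=\omega(4)=1$, $\omega(2^x)=2$ for $x\geq3$ (Theorem \ref{omega(2)^x}) and Renault's $\omega(p^e)=\omega(p)$. You instead rederive the relevant cells of that table from first principles: Cassini plus the shift identity gives $\omega(m)=\text{ord}_m(F_{\alpha(m)+1})$ and hence the parity criterion $\omega(m)=4$ if and only if $\alpha(m)$ is odd for $m>2$ (this is the paper's Corollary \ref{alpha_and_omega}, which it uses for Theorem \ref{main_theorem} but not for the conjectures), and the rank identity $\alpha(\lcm[a,b])=\lcm[\alpha(a),\alpha(b)]$ reduces everything to the parity of $\alpha$ at prime powers. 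What this buys you is a self-contained argument independent of the Vinson--Renault table; it closely parallels the machinery the paper builds later for the $K$-Fibonacci generalization (Theorems \ref{omega-alpha}, \ref{K-wyler}, \ref{K-multiplication}), so your route is the one that actually generalizes.

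Two repairs are needed, neither fatal. First, your formula $\alpha(2^e)=3\cdot2^{e-1}$ for $e\geq2$ is wrong: $F_6=8$, so $\alpha(8)=6$, not $12$, and in general $\alpha(2^e)=3\cdot2^{e-2}$ for $e\geq3$; likewise $s(2)=v_2(F_3)=1$, so your stated reason for excluding $p=2$ from the lifting formula is not the right one (the failure comes from $4\mid L_3$, so $F_6=F_3L_3$ jumps two powers of $2$ at once). All you actually use is that $\alpha(2^e)$ is even for $e\geq2$, which is true, but the formula as written should be corrected. Second, your claim that upgrading from ``prime factors'' to ``all factors'' is immediate and ``introduces no circularity'' glosses over the degenerate divisors $1$ and $2$ (both have odd rank but order $1$) and the vacuous case of prime $m$; the paper explicitly flags these as defects in the literal OEIS wording and excludes them, and your write-up should state the same caveats.
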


\begin{conjecture}[A053031 \cite{[A053031]}]\label{conj:oeis1}
    An integer $m$ has one zero in its Pisano period if and only if $m$ is an odd number, all of whose factors have one zero in their Pisano period, or if $m$ is twice or four times such a number.
\end{conjecture}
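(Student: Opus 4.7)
The plan is to reduce to odd prime-power factors via the coprime multiplicativity $\pi(m_1m_2)=\lcm(\pi(m_1),\pi(m_2))$ and $\alpha(m_1m_2)=\lcm(\alpha(m_1),\alpha(m_2))$, together with the identity $\omega(m)=\pi(m)/\alpha(m)\in\{1,2,4\}$. The $2$-part is handled by the classical $\pi(2^a)=3\cdot 2^{a-1}$, $\alpha(2^a)=3\cdot 2^{a-2}$ for $a\ge 3$ (so $\omega(2^a)=2$), with $\omega(1)=\omega(2)=\omega(4)=1$. The ``if'' direction then follows quickly: if $m=2^an$ with $a\le 2$ and $n$ odd having $\omega(n)=1$, the lcm assembly gives $\pi(m)=\alpha(m)$. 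The real content is the ``only if'' direction.

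The heart is a $2$-adic trichotomy for odd prime powers: \emph{for an odd prime power $p^e$, $\omega(p^e)=4$ iff $v_2(\alpha(p^e))=0$; $\omega(p^e)=1$ iff $v_2(\alpha(p^e))=1$; and $\omega(p^e)=2$ iff $v_2(\alpha(p^e))\ge 2$.} I would prove this from the Cassini identity $F_{\alpha+1}^2\equiv(-1)^\alpha\pmod{p^e}$ (setting $\alpha=\alpha(p^e)$): the element $c:=F_{\alpha+1}$ lies in $(\ZZ/p^e)^\times$ with $c^2=(-1)^\alpha$, and its order equals $\omega(p^e)$. Odd $\alpha$ forces $c^2=-1$, giving order $4$. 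For $\alpha=2m$ even, the identity $F_{2m}=F_mL_m$ together with $p^e\mid F_{2m}$ and $p^e\nmid F_m$ (by minimality of $\alpha$) yields $p\mid L_m$; substituting into $L_\alpha=L_m^2-2(-1)^m$ and using $L_\alpha\equiv 2c\pmod{p^e}$ pins $c\equiv 1\pmod p$ when $m$ is odd and $c\equiv -1\pmod p$ when $m$ is even, which lifts uniquely to $(\ZZ/p^e)^\times$ since $\pm 1$ are distinguished mod $p$. The passage from $p$ to $p^e$ also uses that $\alpha(p^e)/\alpha(p)$ is a power of the odd prime $p$, so $v_2(\alpha(p^e))=v_2(\alpha(p))$.

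For the ``only if'' direction, assume $\omega(m)=1$ and write $m=2^a\prod_i p_i^{e_i}$ with $\omega_i:=\omega(p_i^{e_i})$ and $\alpha_i:=\alpha(p_i^{e_i})$. Since each $\omega_j$ is a power of $2$, the valuations $v_q(\pi(m))=v_q(\alpha(m))$ hold automatically at every odd prime $q$, so $\pi(m)=\alpha(m)$ reduces to the single $2$-adic equality $v_2(\pi(m))=v_2(\alpha(m))$. Plugging in the trichotomy, any $\omega_i=2$ contributes $v_2(\omega_i\alpha_i)=v_2(\alpha_i)+1$, so some $j$ must satisfy $v_2(\alpha_j)>v_2(\alpha_i)\ge 2$; the trichotomy again forces $\omega_j=2$, producing an infinite ascending chain of $2$-adic valuations among the finitely many $\alpha_i$, a contradiction. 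An $\omega_i=4$ forces $v_2(\alpha_j)\ge 2$ somewhere, hence $\omega_j=2$, triggering the same chain. So every $\omega_i=1$, every $v_2(\alpha_i)=1$, and $v_2(\alpha(n))=1$ for the odd part $n$. Then $a\ge 3$ would give $v_2(\pi(m))\ge a-1\ge 2>1=v_2(\alpha(m))$, contradicting $\omega(m)=1$; so $a\le 2$. A final application of multiplicativity extends $\omega=1$ from each $p_i^{e_i}$ to every divisor of $n$. The main obstacle is the trichotomy lemma for $\omega(p^e)$; everything else is $2$-adic bookkeeping on top of multiplicativity.
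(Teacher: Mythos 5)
Your argument is correct in substance but takes a genuinely different, more self-contained route than the paper. The paper disposes of this conjecture in two lines by citing Renault's ``multiplication table'' for $\omega(\lcm[m,n])$ (Theorem \ref{thm:Renault}), Vinson's computation of $\omega(2^x)$ (Theorem \ref{omega(2)^x}), and Renault's $\omega(p^e)=\omega(p)$: the only-if direction is literally the observation that $\omega(ab)=1$ forces $\omega(a)=\omega(b)=1$ in that table. You instead reprove the needed inputs. Your ``$2$-adic trichotomy'' for odd prime powers is exactly Wyler's theorem (Corollary \ref{alpha_and_omega}, which the paper only cites), and your derivation --- Cassini giving $F_{\alpha+1}^2\equiv(-1)^{\alpha}$, the order of $F_{\alpha+1}$ equalling $\omega$, and the Lucas identities $F_{2m}=F_mL_m$ and $L_{2m}=L_m^2-2(-1)^m$ pinning $F_{\alpha+1}\equiv\pm1$ according to the parity of $\alpha/2$ --- is a clean proof of it; your $2$-adic bookkeeping over the lcm decomposition then re-derives the relevant row and column of Renault's table. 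Your approach buys self-containedness and an explanation of \emph{why} the table holds; the paper's buys brevity. Two small repairs are needed: (i) you use without proof that $\mathrm{ord}_{p^e}(F_{\alpha+1})=\omega(p^e)$ --- this is standard (it is Proposition \ref{residue-order} here in the $K$-Fibonacci setting) but it is a genuine ingredient and should be isolated as a lemma; (ii) in ruling out $a\ge3$ you assert $v_2(\alpha(m))=1$, which is correct only for $a=3$; for $a\ge4$ one has $v_2(\alpha(m))=a-2$ and $v_2(\pi(m))=a-1$, so the conclusion $\omega(m)\neq1$ still follows, but from the difference of valuations being $1$ rather than from the equality you wrote.
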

It follows that all other positive integers have exactly two zeros in their Pisano period. This paper proves these conjectures and uses this curious property to show the relationship between the order of $m$ and the prime factors of $F_n$. Namely, the order of a positive integer depends on exactly when it is a prime factor of $F_n$ according to the congruence class of $n$.
\begin{theorem}\label{main_theorem} For a positive integer $m$,
    \begin{enumerate}[label=(\roman*)]
        \item For a positive integer $m$, $\omega(m) = 4$ for $m > 3$ if and only if m has prime factorization $m = 2^jp_1^{e_1} \cdots p_r^{e_r}$ where $j \in \{0,1\}$ and for each $1 \varleq i \varleq r$ there is an odd index $n_i$ such that $p_i \mid F_{n_i}$.\\
        \item For a positive integer $m$, $\omega(m) = 1$ for $m > 3$ if and only if m has prime factorization $m = 2^jp_1^{e_1} \cdots p_r^{e_r}$ where $j \in \{0,1,2\}$ and for each $1 \varleq i \varleq r$ there is an index $n_i \equiv 2 \pmod 4$ such that $p_i \mid F_{n_i}$, but $p_i \not \mid F_{\nu_i}$ for all odd $\nu_i$.\\
        \item $\omega(m)=2$ for all other positive integers $m$.
    \end{enumerate}
\end{theorem}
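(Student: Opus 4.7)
The plan is to reduce to prime-power moduli, classify $\omega$ there, and then recombine via $\lcm$-multiplicativity of $\pi$ and $\alpha$: $\pi(ab)=\lcm(\pi(a),\pi(b))$ and $\alpha(ab)=\lcm(\alpha(a),\alpha(b))$ for $\gcd(a,b)=1$, which makes $\omega(m)=\pi(m)/\alpha(m)$ a function of the prime-power invariants. The key quantitative simplification is that on every prime power $q$ one has $\pi(q)=\omega(q)\alpha(q)$ with $\omega(q)\in\{1,2,4\}$ a power of $2$, so $\pi(q)$ and $\alpha(q)$ share the same odd part; consequently $\pi(m)$ and $\alpha(m)$ share the same odd part, and the computation of $\omega(m)$ reduces to determining the 2-adic gap $v_2(\pi(m))-v_2(\alpha(m))\in\{0,1,2\}$.

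For the prime-level classification, the Catalan--Cassini identity $F_{n-1}F_{n+1}-F_n^2=(-1)^n$ evaluated at $n=\alpha(p)$ gives $F_{\alpha(p)+1}^2\equiv(-1)^{\alpha(p)}\pmod p$; since $\omega(p)$ equals the multiplicative order of $F_{\alpha(p)+1}\pmod p$, this yields the classical trichotomy for odd $p$: $\omega(p)=4\Leftrightarrow\alpha(p)$ odd; $\omega(p)=1\Leftrightarrow\alpha(p)\equiv 2\pmod 4$; $\omega(p)=2\Leftrightarrow\alpha(p)\equiv 0\pmod 4$. For $e\geq 1$, $\alpha(p^e)/\alpha(p)$ and $\pi(p^e)/\pi(p)$ are each powers of the odd prime $p$, and since $\omega(p^e)\in\{1,2,4\}$ these two powers must coincide, giving $\omega(p^e)=\omega(p)$ and $v_2(\alpha(p^e))=v_2(\alpha(p))$. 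Powers of $2$ are handled by direct computation: $\omega(2)=\omega(4)=1$ with $\alpha(2)=3,\alpha(4)=6$, while for $j\geq 3$, $\omega(2^j)=2$ with $v_2(\alpha(2^j))=j-2$ and $v_2(\pi(2^j))=j-1$. Finally, the strong divisibility property $p\mid F_n\Leftrightarrow\alpha(p)\mid n$ translates the hypotheses in (i) and (ii) to $\alpha(p_i)$ odd and $\alpha(p_i)\equiv 2\pmod 4$, respectively.

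With these ingredients in place, everything reduces to computing $v_2(\pi(m))-v_2(\alpha(m))$ as $j$ and the data $v_2(\alpha(p_i^{e_i}))\in\{0,1,\geq 2\}$ vary. In case (i) the odd part contributes $v_2(\alpha)=0,v_2(\pi)=2$; the 2-part preserves this gap for $j\leq 1$ (giving $\omega(m)=4$), but $v_2(\alpha(2^j))\geq 1$ for $j\geq 2$ shrinks it. In case (ii) the odd part contributes $v_2(\alpha)=v_2(\pi)=1$; the 2-part preserves this for $j\leq 2$ (giving $\omega(m)=1$), but $v_2(\pi(2^j))=v_2(\alpha(2^j))+1$ for $j\geq 3$ opens a gap. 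The main obstacle is case (iii): showing every remaining configuration produces a gap of exactly $1$. When some $\alpha(p_i)\equiv 0\pmod 4$, that factor already has $v_2(\pi)-v_2(\alpha)=1$, and this gap propagates through the $\lcm$; in mixed type-4/type-1 configurations, the $\lcm$ combines $v_2(\alpha)=0,v_2(\pi)=2$ with $v_2(\alpha)=v_2(\pi)=1$ to give $v_2(\alpha(m))=1,v_2(\pi(m))=2$; and out-of-range values of $j$ contribute similar single-unit gaps. Careful 2-adic bookkeeping in each subcase pins $\omega(m)=2$, completing (iii) by elimination.
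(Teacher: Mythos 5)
Your proposal is correct, and it follows the paper's skeleton for the first three steps: Daykin--Dresel to translate $p \mid F_{n_i}$ into a congruence on $\alpha(p)$ modulo $4$, the Cassini-based trichotomy $\omega(p)=4,1,2$ according as $\alpha(p)\equiv\pm1,2,0\pmod 4$ (this is exactly Wyler's theorem, which the paper cites here and re-proves by your Cassini-plus-order-of-residue argument only later, in the $K$-Fibonacci section), the lifting $\omega(p^e)=\omega(p)$, and the explicit values at powers of $2$. Where you genuinely diverge is the recombination step: the paper invokes the Vinson--Renault ``multiplication table'' for $\omega(\lcm[m,n])$ as a black box, whereas you inline its proof by observing that $\pi(q)$ and $\alpha(q)$ share the same odd part on every prime power, hence so do $\pi(m)$ and $\alpha(m)$ under $\lcm$-multiplicativity, reducing everything to $\omega(m)=2^{\max_q v_2(\pi(q))-\max_q v_2(\alpha(q))}$. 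This buys a self-contained and genuinely quantitative argument (the paper's table, taken on faith, hides exactly this computation), at the cost of the subcase bookkeeping in (iii). That bookkeeping does close: the one claim you state loosely, that a factor with $v_2(\pi)-v_2(\alpha)=1$ ``propagates through the lcm,'' is justified because every admissible prime-power pair with $v_2(\alpha(q))\geq 2$ automatically satisfies $v_2(\pi(q))=v_2(\alpha(q))+1$ (there are no pairs $(a,a)$ or $(a,a+2)$ with $a\geq 2$), so whichever factor realizes $\max_q v_2(\alpha(q))\geq 2$ forces $\max_q v_2(\pi(q))=\max_q v_2(\alpha(q))+1$; together with your analysis of the $\max\leq 1$ configurations this pins the gap at exactly $1$ outside cases (i) and (ii). It would strengthen the write-up to state that structural fact explicitly rather than deferring to ``careful bookkeeping.''
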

This list is exhaustive, in the sense that every natural number falls into exactly one category of Theorem \ref{main_theorem}. Note that every prime divides some Fibonacci number, and with the exceptions of $n=1, 2, 6, 12$, every Fibonacci number $F_n$ is divisible by a prime that is not a divisor of any smaller Fibonacci number.
\begin{theorem}[Williams \cite{Williams}]
    For a prime $p$, $p \mid F_{p-\left(\frac{5}{p}\right)}$ where $\left(\frac{5}{p}\right)$ is the Legendre symbol.
\end{theorem}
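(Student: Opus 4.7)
The plan is to prove Williams' theorem by combining Binet's formula with Fermat's little theorem in a suitable field extension of $\mathbb{F}_p$. Recall that $F_n = (\phi^n - \psi^n)/\sqrt{5}$, where $\phi, \psi$ are the roots of $x^2 - x - 1$, satisfying $\phi + \psi = 1$ and $\phi\psi = -1$. The exceptional primes are handled by inspection: for $p = 5$, $F_5 = 5$ (with the convention $(5/5) = 0$), and for $p = 2$, $F_3 = 2$ matches $(5/2) = -1$ under the usual Kronecker extension.

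For odd $p \neq 5$, I would split on the value of the Legendre symbol. If $(5/p) = 1$, then $5$ is a quadratic residue mod $p$, so $\sqrt{5}$ exists in $\mathbb{F}_p$ and $\phi, \psi$ lie in $\mathbb{F}_p^\times$. Fermat's little theorem gives $\phi^{p-1} = \psi^{p-1} = 1$ in $\mathbb{F}_p$, so $F_{p-1} \equiv 0 \pmod{p}$. If $(5/p) = -1$, then $\phi, \psi$ live in $\mathbb{F}_{p^2} = \mathbb{F}_p[\sqrt{5}]$; by Euler's criterion, $(\sqrt{5})^p = 5^{(p-1)/2}\sqrt{5} = -\sqrt{5}$, so the Frobenius automorphism $x \mapsto x^p$ swaps $\phi$ and $\psi$. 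Hence $\phi^{p+1} = \phi\psi = -1$ and $\psi^{p+1} = \psi\phi = -1$, giving $F_{p+1} \equiv 0 \pmod{p}$. In both cases the conclusion $p \mid F_{p-(5/p)}$ follows.

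The main technical point --- really the only obstacle --- is justifying division by $\sqrt{5}$, which is valid since $5 \not\equiv 0 \pmod{p}$ for $p \neq 5$. An elementary alternative that avoids passing to $\mathbb{F}_{p^2}$ would be to expand $2^{p-1} F_p$ via the Lucas-type identity $2^{n-1} F_n = \sum_k \binom{n}{2k+1} 5^k$: for $0 < 2k+1 < p$ the binomial coefficient $\binom{p}{2k+1}$ vanishes mod $p$, so $F_p \equiv 5^{(p-1)/2} \equiv (5/p) \pmod{p}$ by Euler's criterion. Combining this with the analogous congruence $L_p \equiv 1 \pmod{p}$ for the Lucas numbers and the identities $2F_{p\pm 1} = L_p \pm F_p$ yields the theorem directly, without invoking any field extensions at all.
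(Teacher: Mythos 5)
The paper does not prove this statement at all: it is quoted as a known result attributed to Williams, with no argument supplied. Your proposal therefore cannot match the paper's proof, but it is a correct and complete proof in its own right. The main argument is the standard one: for $\left(\frac{5}{p}\right)=1$ the roots $\phi,\psi$ of $x^2-x-1$ lie in $\mathbb{F}_p^\times$ (nonzero because $\phi\psi=-1$), so Fermat gives $\phi^{p-1}=\psi^{p-1}=1$ and $F_{p-1}\equiv 0$; for $\left(\frac{5}{p}\right)=-1$ the Frobenius swaps $\phi$ and $\psi$ since $(\sqrt{5})^p=-\sqrt{5}$, whence $\phi^{p+1}=\psi^{p+1}=\phi\psi=-1$ and $F_{p+1}\equiv 0$. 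The exceptional cases $p=5$ (where the symbol is $0$ and $F_5=5$) and $p=2$ (handled via the Kronecker symbol) are dispatched correctly, and the division by $\sqrt{5}$ and by $2$ is legitimate for odd $p\neq 5$. Your elementary alternative is also sound: $2^{p-1}F_p=\sum_k\binom{p}{2k+1}5^k$ collapses mod $p$ to $F_p\equiv 5^{(p-1)/2}\equiv\left(\frac{5}{p}\right)$, and $L_p\equiv 1$ combined with $2F_{p\pm 1}=L_p\pm F_p$ gives the result without leaving $\mathbb{Z}/p\mathbb{Z}$; this second route is arguably preferable in the context of this paper since it needs no field extensions. Either version would serve as a self-contained proof of the cited theorem.
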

\begin{theorem}[Carmichael \cite{Carmichael}]
    For every index $n\neq1,2,6,12$, there exists a prime $p$ such that $p \mid F_n$ and for all $m<n$, $p \not\mid F_m$.
\end{theorem}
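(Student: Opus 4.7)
The plan is to combine Binet's formula with a cyclotomic-style factorization of $F_n$. Let $\phi, \psi$ be the roots of $x^2 - x - 1$, so that $F_n = (\phi^n - \psi^n)/(\phi - \psi)$. Starting from the identity $x^n - y^n = \prod_{d \mid n} C_d(x,y)$ with
\[
C_d(x,y) = \prod_{\substack{1 \leq k < d \\ \gcd(k,d)=1}} \bigl(x - e^{2\pi i k/d}\, y\bigr),
\]
and absorbing the $d=1$ factor $\phi-\psi=\sqrt{5}$ against the denominator, I would first record the \emph{algebraic cyclotomic factorization} $F_n = \prod_{d \mid n,\, d > 1} C_d(\phi,\psi)$. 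Each $C_d(\phi, \psi)$ is a rational integer, being a symmetric polynomial in $\phi, \psi$ fixed by the Galois automorphism $\sqrt{5}\mapsto -\sqrt{5}$.

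The central lemma to establish next is a Birkhoff--Vandiver dichotomy: any prime $p$ dividing $C_n(\phi, \psi)$ either satisfies $\alpha(p) = n$, making $p$ a \emph{primitive} prime divisor of $F_n$, or else $p$ is the largest prime factor of $n$ and appears in $C_n(\phi, \psi)$ to the first power only. The inputs here are the strong divisibility property $\gcd(F_m, F_n) = F_{\gcd(m,n)}$, which yields the law of apparition $p \mid F_k \iff \alpha(p) \mid k$, together with a $p$-adic valuation computation relating $v_p(F_n)$ to $v_p(F_{\alpha(p)})$ and $v_p(n/\alpha(p))$ whenever $\alpha(p) \mid n$.

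The main quantitative step, and the principal obstacle, is proving the effective lower bound $|C_n(\phi, \psi)| > n$ for every $n \notin \{1, 2, 6, 12\}$. Using $|\phi| > 1 > |\psi|$ and bounding $|\phi - \zeta \psi|$ from below for roots of unity $\zeta$ gives an estimate of the form
\[
\log |C_n(\phi, \psi)| \geq \varphi(n)\log \phi - R(n),
\]
where $R(n)$ is a mild correction coming from the unit-circle roots nearest the real axis. Since $\varphi(n)/\log n \to \infty$, this exceeds $\log n$ for all sufficiently large $n$, and the finite middle range is dispatched by direct computation. Once $|C_n(\phi,\psi)| > n$, it must contain a prime factor not dividing $n$, and by the preceding lemma that prime is a primitive divisor of $F_n$.

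Finally, the four exceptions $n = 1, 2, 6, 12$ are exactly the values where the size bound fails: $F_1 = F_2 = 1$ trivially have no primitive divisors, $F_6 = 8$ contributes only $2 \mid F_3$, and $F_{12} = 144 = 2^4 \cdot 3^2$ contributes only $2 \mid F_3$ and $3 \mid F_4$. Pinning down the exception list therefore reduces to verifying that the cyclotomic lower bound is tight precisely at these four indices, which is a direct numerical check.
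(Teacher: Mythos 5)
The paper does not actually prove this statement---it is quoted from Carmichael as a known result---so there is no internal argument to compare against. Your outline follows the classical Birkhoff--Vandiver/Carmichael route (homogenized cyclotomic factorization of $\phi^n-\psi^n$, a primitive/intrinsic dichotomy for the prime divisors of $C_n(\phi,\psi)$, and a size estimate), which is the correct architecture. However, two of your steps are stated in forms that are false for the Fibonacci sequence, and the failures occur precisely at the indices that make the theorem interesting.

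First, the dichotomy ``any non-primitive prime divisor of $C_n(\phi,\psi)$ is the largest prime factor of $n$ and appears to the first power only'' breaks at $p=2$. The usual argument that a non-primitive $p\mid C_n$ must be the largest prime factor of $n$ uses $\alpha(p)\mid p-\left(\frac{5}{p}\right)$, so that every prime factor of $\alpha(p)$ is smaller than $p$; this fails for $p=2$ because $\alpha(2)=3>2$. Concretely, $C_6(\phi,\psi)=4=2^2$: the non-primitive prime $2$ is not the largest prime factor of $6$ and occurs to the \emph{second} power (the $2$-adic valuation of $F_n$ jumps by $2$ from $F_3$ to $F_6$), and $C_{12}(\phi,\psi)=6$ has two distinct non-primitive prime factors. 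Since this anomaly at $p=2$ is exactly the mechanism that creates the exceptions $n=6,12$, a proof asserting the unqualified dichotomy cannot correctly derive the exceptional set; you need a separate clause for $p=2$, including a lifting-the-exponent computation of $v_2(F_{3\cdot 2^k})$. Second, your closing claim that the bound $|C_n(\phi,\psi)|>n$ fails exactly at $n\in\{1,2,6,12\}$ is false: one computes $C_3=2$, $C_4=3$, $C_5=5$, $C_8=7$, so the bound also fails at $n=3,4,5,8$, where $F_n$ nevertheless has a primitive divisor. The finite verification must therefore distinguish ``the sufficient size condition fails'' from ``no primitive divisor exists.'' A smaller point: bounding each factor $|\phi-\zeta\psi|$ from below individually yields only $|C_n|\geq 1$, since $|\phi-\zeta\psi|\geq\phi-\phi^{-1}=1$; to extract the main term $\varphi(n)\log\phi$ you need the M\"obius-inverted product $C_n=\prod_{d\mid n}\left(\phi^d-\psi^d\right)^{\mu(n/d)}$ or an equivalent averaging argument.
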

\begin{example*}
Table \ref{Table_omega} categorizes the integer bases according to the number of zeros in their Pisano period. Compare this with Table \ref{table2}, the prime factorization of the first few Fibonacci numbers - for all $n\vargeq5$, the prime factors of $F_n$ are sorted based on the congruence class of $n$. 

\begin{table}[H]
\centering
\caption{Splitting of integers according to the number of zeros in one Pisano period}
\begin{tabular}{c|c|c}
$\omega(m)=1$&$\omega(m)=2$&$\omega(m)=4$\\
$n\equiv2\pmod{4}$&$n\equiv0\pmod{4}$&$n\equiv\pm1\pmod{4}$\\
\hline
1, 2, 4, 11, 19, 22, 29, 31,&3, 6, 7, 8, 9, 12, 14, 15,&5, 10, 13, 17, 25, 26, 34, 37,\\
38, 44, 58, 59, 62, 71, 76, 79,&16, 18, 20, 21, 23, 24, 27, 28,&50, 53, 61, 65, 73, 74, 85, 89,\\
101, 116, 118, 121, 124, 131,&30, 32, 33, 35, 36, 39, 40,&97, 106, 109, 113, 122, 125,\\
 139,142, 151, 158, 179, 181,\ldots&41, 42, 43, 45, 46, 47, 48, 49,\ldots&130, 137, 146, 149, 157\ldots\\
&&\\
OEIS: A053031 \cite{[A053031]}&OEIS: A053030 \cite{[A053030]}&OEIS: A053029 \cite{[A053029]}
\end{tabular}
\label{Table_omega}
\end{table}
 
\begin{table}[ht!]
    \centering
    \begin{tabular}{c|c|c|c|c|c|c|c|c|c|c|c|c|c|c|c|c}
         $n$&1&2&3&4&5&6&7&8&9&10&11&12&13&14&15&16  \\
         \hline
         $F_n$&1&1&2&3&5&$2^3$&13&$3\cdot7$&$2\cdot17$&$5\cdot11$&89&$2^4\cdot3^2$&233&$13\cdot29$&$2\cdot5\cdot61$&$3\cdot7\cdot47$ 
    \end{tabular}
    \caption{Prime factorization of $F_n$ for $n\varleq16$.}
    \label{table2}
\end{table}
\end{example*}

For $n\vargeq5$, notice that whenever $n\equiv\pm1\pmod{4}$, the odd prime factors of $F_n$ fall in the $\omega(m)=4$ sequence, whenever $n\equiv0\pmod{4}$, the odd prime factors of $F_n$ fall in the $\omega(m)=2$ sequence (unless the prime factor is also a prime factor of another $F_n$ for an odd index $n$), and whenever $n\equiv2\pmod{4}$, the odd prime factors of $F_n$ fall in the $\omega(m)=1$ sequence (unless the prime factor is also a prime factor of another $F_n$ for an odd index $n$).

\section{Preliminaries}
The proof of Theorem \ref{main_theorem} follows from many established results. This section details the tools used in the proofs that follow. There is a relationship between the rank, the order, and the Pisano period that was first established by Wall \cite{Wall}:
\begin{theorem}[Wall \cite{Wall}]\label{pi = alpha x omega}
    $\pi(p)=\alpha(p)\omega(p)$.
\end{theorem}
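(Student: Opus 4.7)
The plan is to show that the zeros of the Fibonacci sequence modulo $p$ occur exactly at the indices divisible by $\alpha(p)$, and that consecutive zeros within one Pisano period are evenly spaced at intervals of $\alpha(p)$. Once this structure is in place, $\pi(p)=\alpha(p)\omega(p)$ is immediate: the $\omega(p)$ zeros in one period sit at indices $0,\alpha(p),2\alpha(p),\ldots,(\omega(p)-1)\alpha(p)$, and the period closes at index $\omega(p)\alpha(p)$.

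The first step is to establish that $p\mid F_n$ if and only if $\alpha(p)\mid n$, via the classical identity $\gcd(F_m,F_n)=F_{\gcd(m,n)}$: if $p\mid F_n$ then $p\mid F_{\gcd(n,\alpha(p))}$, and the minimality of $\alpha(p)$ forces $\gcd(n,\alpha(p))=\alpha(p)$, while the converse is $F_d\mid F_{kd}$. The second step tracks the ``state'' $(F_n,F_{n+1})\pmod p$ using the companion matrix $M=\bigl(\begin{smallmatrix}1&1\\1&0\end{smallmatrix}\bigr)$, for which $M^n=\bigl(\begin{smallmatrix}F_{n+1}&F_n\\F_n&F_{n-1}\end{smallmatrix}\bigr)$. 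Because $F_{\alpha(p)}\equiv 0\pmod p$, the reduction $M^{\alpha(p)}\bmod p$ is diagonal with entries $c:=F_{\alpha(p)+1}$ and $F_{\alpha(p)-1}$; raising to the $k$-th power yields $F_{k\alpha(p)}\equiv 0$ and $F_{k\alpha(p)+1}\equiv c^k\pmod p$, so the Fibonacci state at each multiple of $\alpha(p)$ forms a geometric progression in $c$.

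Finally, $\pi(p)$ is the least $N\geq 1$ with $(F_N,F_{N+1})\equiv(0,1)\pmod p$; the previous step forces $N=k\alpha(p)$ with $c^k\equiv 1$, so $\pi(p)=\alpha(p)\cdot\mathrm{ord}_p(c)$, while the zeros within one period occur at $k=0,1,\ldots,\mathrm{ord}_p(c)-1$, giving $\omega(p)=\mathrm{ord}_p(c)$. The main obstacle is the diagonalization step, namely showing that $M^{\alpha(p)}$ reduces to a diagonal matrix modulo $p$; this relies on the Fibonacci addition formula encoded by the matrix identity above, and after that everything reduces to tracking the geometric progression of the right-neighbor values at the zeros.
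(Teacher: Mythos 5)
Your proof is correct, and it is essentially the route the paper itself follows when it re-derives this fact for $K$-Fibonacci sequences: your matrix-diagonalization step producing $F_{k\alpha(p)}\equiv 0$ and $F_{k\alpha(p)+1}\equiv c^{k}$ is exactly Lemma~\ref{residue-subtraction} with $c=\beta_K(m)$, your identification $\omega(p)=\mathrm{ord}_p(c)$ is Proposition~\ref{residue-order}, and your reduction of zero-indices to multiples of $\alpha(p)$ is Theorem~\ref{period-divisibility}. (For this classical statement the paper only cites Wall rather than giving a proof, but your argument is complete; the invertibility of $c$ modulo $p$, which you use implicitly, follows from $\det M^{\alpha(p)}=(-1)^{\alpha(p)}$.)
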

\noindent A table of relations was compiled by Vinson \cite{Vinson} and later reformulated into a theorem of Renault \cite{Renault} categorizing when a number has exactly $1$, $2$, or $4$ zeros in its Pisano period. 
\begin{theorem}[Renault \cite{Renault}]\label{thm:Renault}
Let $m$ and $n$ be positive integers, then $\omega\left(\text{lcm}[m,n]\right)$ is given by Table \ref{VinsonTable}:

\begin{table}[H]
\centering
\begin{tabular}{cc|ccc}
&&&$\omega(m)$&\\
&&&&\\
&&1&2&4\\
\hline
&&&&\\
&1&1&2&4 if $m=2$, else 2\\
&&&&\\
$\omega(n)$&2&2&2&2\\
&&&&\\
&4&4 if $n=2$, else 2&2&4
\end{tabular}
\caption{Table of $\omega\left(\text{lcm}[m,n]\right)$.}\label{VinsonTable}
\label{Table of Omega}
\end{table}
\end{theorem}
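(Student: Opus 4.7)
The plan is to express $\omega(\lcm[m,n])$ in terms of $\alpha$ and $\pi$ of $m$ and $n$, exploit a matrix characterization of $\omega$ via the parity of $\alpha$, and conclude by a $2$-adic case analysis across the table entries.

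\textbf{Step 1 (rank and period distribute over $\lcm$).} Set $L := \lcm[m,n]$. A Fibonacci number $F_k$ vanishes modulo $L$ if and only if it vanishes modulo both $m$ and $n$, equivalently $\lcm[\alpha(m),\alpha(n)] \mid k$; hence $\alpha(L) = \lcm[\alpha(m),\alpha(n)]$. The same reasoning applied to the return of the pair $(F_k,F_{k+1})$ to $(0,1)$ yields $\pi(L) = \lcm[\pi(m),\pi(n)]$. Combined with Theorem~\ref{pi = alpha x omega} this reduces the problem to
\[
\omega(L) \;=\; \frac{\lcm[\alpha(m)\omega(m),\,\alpha(n)\omega(n)]}{\lcm[\alpha(m),\alpha(n)]}.
\]

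\textbf{Step 2 (Q-matrix and the parity dichotomy).} The Fibonacci matrix $Q = \begin{pmatrix}1&1\\1&0\end{pmatrix}$ satisfies $Q^{\alpha(m)} \equiv \epsilon(m)\,I \pmod m$ where $\epsilon(m) := F_{\alpha(m)+1}$, so $\omega(m)$ is the multiplicative order of $\epsilon(m)$ modulo $m$. Taking determinants gives $\epsilon(m)^2 \equiv (-1)^{\alpha(m)} \pmod m$, which reproves $\omega(m) \in \{1,2,4\}$ and yields, for $m > 2$, the dichotomy $\omega(m) = 4 \iff \alpha(m)$ is odd, while $\omega(m) \in \{1,2\} \iff \alpha(m)$ is even. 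A refined auxiliary lemma is also needed: if $\omega(m) = 1$ and $m > 2$, then $\alpha(m) \equiv 2 \pmod 4$. This follows by assuming $4 \mid \alpha(m)$ so that $Q^{\alpha(m)/2}$ is a nontrivial involution modulo $m$; its entries $a := F_{\alpha(m)/2+1}$, $b := F_{\alpha(m)/2}$, $c := F_{\alpha(m)/2-1}$ satisfy $a^2+b^2 \equiv 1$, $b(a+c) \equiv 0$, $ac - b^2 \equiv 1 \pmod m$, and analyzing these at each odd prime power dividing $m$ forces $b \equiv 0 \pmod m$, contradicting the minimality of $\alpha(m)$.

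\textbf{Step 3 (case analysis).} Each entry of Table~\ref{VinsonTable} now follows from a $2$-adic valuation computation. Setting $s := \nu_2(\alpha(m))$ and $t := \nu_2(\alpha(n))$, Step 2 gives $s = 0 \iff \omega(m)=4$ (for $m>2$), $s = 1$ whenever $\omega(m) = 1$ and $m > 2$, and $s \geq 1$ whenever $\omega(m) = 2$. The three diagonal cases follow immediately by pulling the common factor $c = \omega(m) = \omega(n)$ out of the numerator of the formula in Step 1. For the off-diagonal cases one reads off $\nu_2(\omega(L)) = \max(s+\nu_2(\omega(m)),\, t+\nu_2(\omega(n))) - \max(s,t)$ and checks that the result matches the table in each box. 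I expect the main obstacle to be the auxiliary lemma in Step 2, whose local analysis on non-squarefree $m$ is delicate. A secondary subtlety is the two asymmetric ``$n=2$'' entries in the table: these arise because $\alpha(2) = 3$ is odd yet $\omega(2) = 1$, a degeneracy stemming from $-1 \equiv 1 \pmod 2$, so those cells must be handled by direct inspection.
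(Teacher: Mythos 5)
The paper offers no proof of this theorem; it is quoted from Renault (and ultimately Vinson), so I am judging your argument on its own. Your skeleton is the standard one and is sound: $\alpha$ and $\pi$ distribute over $\lcm$, $\omega=\pi/\alpha$ is a power of $2$ whose odd part cancels, and the table reduces to the $2$-adic bookkeeping $\nu_2(\omega(L))=\max(s+\nu_2(\omega(m)),\,t+\nu_2(\omega(n)))-\max(s,t)$, which I have checked does reproduce every cell once one knows $s$ exactly in each case. Your identification of the crux is also right: everything hinges on the refinement $\omega(m)=1,\ m>2\ \Rightarrow\ \alpha(m)\equiv 2\pmod 4$ (equivalently Wyler's $4\mid\alpha(m)\Rightarrow\omega(m)=2$), since without it the cells in the $\omega=1$ row and column could collapse to $1$ instead of $2$. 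As a bonus, your analysis shows the ``$4$'' in the off-diagonal $1$-versus-$4$ cells occurs exactly when the \emph{order-one} argument equals $2$; the paper's table has these two conditions written as ``$m=2$'' and ``$n=2$'' in the cells where $\omega(m)=4$ and $\omega(n)=4$ respectively, which is vacuous as printed --- your version is the correct one.

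The genuine gap is in the proof of that auxiliary lemma. Your local argument (if $4\mid\alpha(m)$ and $\omega(m)=1$ then $M:=Q^{\alpha(m)/2}$ satisfies $M^2\equiv I$ and $\det M=(-1)^{\alpha(m)/2}=1$, hence $M\equiv\pm I$ and $F_{\alpha(m)/2}\equiv 0$, contradicting minimality of $\alpha$) is fine at odd prime powers --- the step ``forces $b\equiv 0$'' should be justified by noting that over $\ZZ/p^e\ZZ$ with $p$ odd, $M^2=I$ and $\det M=1$ force $M=\pm I$, rather than by juggling the three entry congruences --- but you explicitly restrict the analysis to ``each odd prime power dividing $m$,'' and the lemma is needed for even $m$ as well: $\omega(4)=1$ with $\alpha(4)=6$, $\omega(44)=1$, etc., all occur in the $\omega=1$ row of the table, and at $p=2$ the congruence $2N\equiv 0$ is vacuous so $M\equiv\pm I$ does not follow. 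Closing this requires importing the explicit data $\pi(2)=\alpha(2)=3$, $\pi(4)=\alpha(4)=6$, and $\pi(2^j)=2\alpha(2^j)=3\cdot 2^{j-1}$ for $j\geq 3$ (the content of Theorem \ref{omega(2)^x}), which first rules out $8\mid m$ when $\omega(m)=1$ and then pins $\nu_2(\alpha(m))=1$ by combining the $2$-part with the odd part; none of this is in your write-up. So the proposal is a correct plan with one load-bearing lemma whose proof, as sketched, fails precisely at the prime $2$ --- which is also where all the exceptional entries of the table live.
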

There are theorems found in the work of Renault \cite{Renault} that relate the order of $m$ with the rank and the period of $m$; Theorem \ref{omega_renault} collects these results in a single result.
\begin{theorem}[Renault \cite{Renault}]\label{omega_renault} \ \\
    \begin{itemize}
        \item For $m>3$, $\omega(m)=4$ if and only if $\alpha(m)\equiv\pm1\pmod{4}$.\\
        \item $\omega(m)=2$ if and only if $4\mid\pi(m)$ and $2\mid\alpha(m)$.\\
        \item $\omega(m)=1$ if and only if $4\not\mid\pi(m)$.\\
    \end{itemize}
\end{theorem}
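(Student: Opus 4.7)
The plan is to convert the question about the Pisano period $\pi(m)$ into a question about the multiplicative order of a single element of $(\mathbb{Z}/m\mathbb{Z})^\times$ via the Fibonacci companion matrix. Let $\alpha = \alpha(m)$, $\pi = \pi(m)$, and $M = \begin{pmatrix} 0 & 1 \\ 1 & 1 \end{pmatrix}$, so that a direct induction gives $M^n = \begin{pmatrix} F_{n-1} & F_n \\ F_n & F_{n+1} \end{pmatrix}$, and $\pi(m)$ is the order of $M$ in $GL_2(\mathbb{Z}/m\mathbb{Z})$. Since $F_\alpha \equiv 0 \pmod m$ implies $F_{\alpha+1} \equiv F_{\alpha-1} \pmod m$, Cassini's identity $F_{\alpha-1}F_{\alpha+1} - F_\alpha^2 = (-1)^\alpha$ yields
\[
M^\alpha \equiv F_{\alpha+1} \cdot I \pmod m \qquad \text{and} \qquad F_{\alpha+1}^2 \equiv (-1)^\alpha \pmod m.
\]
Combined with $\pi = \alpha\omega$ from Theorem \ref{pi = alpha x omega}, the first equation identifies $\omega(m)$ with the multiplicative order of $F_{\alpha+1}$ modulo $m$.

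The first bullet then falls out almost immediately. If $\alpha$ is odd then $F_{\alpha+1}^2 \equiv -1 \pmod m$, and for $m > 2$ this forces $F_{\alpha+1}$ to have order exactly $4$, so $\omega(m) = 4$. Conversely, $\omega(m) = 4$ forces $F_{\alpha+1}^2 \not\equiv 1$, hence $(-1)^\alpha \not\equiv 1 \pmod m$, giving $\alpha$ odd, i.e., $\alpha \equiv \pm 1 \pmod 4$. An immediate corollary is that for $m > 2$, $\omega(m) \in \{1, 2\}$ exactly when $\alpha$ is even.

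The main technical step is the forward direction of the third bullet. When $\omega(m) = 1$ (and $m > 2$, so $\alpha$ is even), $M^\alpha \equiv I \pmod m$, and $M^{\alpha/2}$ is therefore self-inverse. Inverting via the adjugate gives $M^{-n} = (-1)^n \begin{pmatrix} F_{n+1} & -F_n \\ -F_n & F_{n-1} \end{pmatrix}$, and equating $M^{\alpha/2} \equiv M^{-\alpha/2} \pmod m$ in the case $\alpha/2$ \emph{even} compares the $(1,1)$-entries to give $F_{\alpha/2-1} \equiv F_{\alpha/2+1} \pmod m$. The Fibonacci recurrence then forces $F_{\alpha/2} \equiv 0 \pmod m$, contradicting the minimality of $\alpha$. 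Thus $\alpha/2$ must be odd, so $\alpha \equiv 2 \pmod 4$ and $4 \nmid \pi = \alpha$. (The case $m = 2$ is handled directly, since $\pi(2) = 3$.) The reverse direction, $4 \nmid \pi \Rightarrow \omega = 1$, is easy: $\omega = 4$ gives $\pi = 4\alpha$, while $\omega = 2$ with $\alpha$ even (which holds for $m > 2$) gives $\pi = 2\alpha$ divisible by $4$, so both cases are excluded.

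The second bullet then follows by elimination: if $4 \mid \pi$ and $2 \mid \alpha$, the first bullet excludes $\omega = 4$ (which would need $\alpha$ odd) and the third bullet excludes $\omega = 1$ (which would need $4 \nmid \pi$), leaving $\omega = 2$; the converse is immediate from $\pi = 2\alpha$ with $\alpha$ even. I expect the main obstacle to be the matrix self-inverse argument in the third bullet --- specifically, confirming that the entry-wise comparison over $\mathbb{Z}/m\mathbb{Z}$ goes through uniformly for all $m$ without requiring a separate treatment by the $2$-adic structure of $m$ or by the prime-power factorization.
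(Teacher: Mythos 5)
Your proof is correct. One point of orientation: the paper never actually proves Theorem \ref{omega_renault} in the classical setting --- it is imported from Renault as a black box --- but it does prove the direct generalization (Theorem \ref{omega-alpha} for $K$-Fibonacci sequences), and your argument shares its skeleton: your observation that $M^{\alpha}\equiv F_{\alpha+1}I\pmod m$ and hence $\omega(m)=\operatorname{ord}_m(F_{\alpha+1})$ is exactly the paper's Proposition \ref{residue-order} with $\beta(m)=F_{\alpha+1}\bmod m$, and your Cassini computation $F_{\alpha+1}^2\equiv(-1)^{\alpha}$ is the paper's equation (1), so the first bullet and the reduction of everything to the order of a single unit are identical. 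Where you genuinely diverge is the forward direction of the third bullet. The paper locates a hidden zero at the half-period via the negafibonacci symmetry $F_{\pi-n}\equiv(-1)^{n+1}F_n$ evaluated at $n=\pi/2+1$, concludes $\omega\geq 2$, and --- because in the $K$-setting that step requires cancelling a factor of $K$ --- must first restrict to primes and then climb back up to general $m$ through prime powers and an lcm induction. You instead use $M^{\alpha/2}=M^{-\alpha/2}$ together with the adjugate formula, and the contradiction you reach, $F_{\alpha/2}\equiv 0\pmod m$ with $0<\alpha/2<\alpha$, is a violation of the \emph{definition} of $\alpha(m)$ as the least index of a multiple of $m$; this closes the argument uniformly for every $m$ with no recourse to the prime factorization. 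So the worry you flag at the end is unfounded: the entrywise congruence is valid over $\mathbb{Z}/m\mathbb{Z}$ for arbitrary $m$, and minimality of the rank is a statement about $m$ itself, not about its prime divisors. The net effect is a cleaner, induction-free proof of the classical statement; the paper's longer route is the price it pays for handling general $K$.
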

A result of Vinson \cite{Vinson} determines the order of powers of 2 in the Fibonacci sequence.
\begin{theorem}[Vinson\cite{Vinson}]\label{omega(2)^x}
    $\omega(2)=\omega(4)=1$, and for $x\vargeq3$, $\omega(2^x)=2$.
\end{theorem}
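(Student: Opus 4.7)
My plan is to apply Theorem~\ref{omega_renault}, which reduces $\omega(m)$ to divisibility properties of $\pi(m)$ and $\alpha(m)$. I would begin with the base cases $x=1,2$ by directly writing out one Pisano period: modulo $2$ the sequence is $0,1,1$ of length $\pi(2)=3$, and modulo $4$ it is $0,1,1,2,3,1$ of length $\pi(4)=6$. Since $4\nmid 3$ and $4\nmid 6$, the third bullet of Theorem~\ref{omega_renault} immediately yields $\omega(2)=\omega(4)=1$.

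For $x\geq 3$ I would rely on two elementary monotonicity facts: whenever $m\mid n$ one has $\pi(m)\mid\pi(n)$ (the sequence mod $m$ is the reduction of the sequence mod $n$, so any period of the latter is a period of the former) and $\alpha(m)\mid\alpha(n)$ (from the characterization $m\mid F_k \iff \alpha(m)\mid k$, applied with $k=\alpha(n)$). First I would directly compute $\pi(8)=12$ and $\alpha(8)=6$ from the sequence $0,1,1,2,3,5,0,5,5,2,7,1,0,1,\ldots\pmod{8}$. Since $8\mid 2^x$ for $x\geq 3$, the monotonicity gives $12\mid\pi(2^x)$, hence $4\mid\pi(2^x)$. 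Since $4\mid 2^x$ and $\alpha(4)=6$, monotonicity gives $6\mid\alpha(2^x)$, hence $2\mid\alpha(2^x)$. Both hypotheses of the second bullet of Theorem~\ref{omega_renault} are satisfied, so $\omega(2^x)=2$.

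The main obstacle is really cosmetic: one must justify the two monotonicity lemmas, which are classical but may warrant a sentence of justification in the write-up. If one wishes to avoid invoking Theorem~\ref{omega_renault} for this particular result, an alternative would be an induction on $x$ using the doubling identity $F_{2n}=F_n(2F_{n+1}-F_n)$ to track the $2$-adic valuation $v_2(F_n)$, ultimately producing the closed forms $\alpha(2^x)=3\cdot 2^{x-2}$ and $\pi(2^x)=3\cdot 2^{x-1}$ for $x\geq 3$; this route is heavier but avoids appealing to Renault's dichotomy.
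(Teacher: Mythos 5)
Your argument is correct, but note that the paper does not actually prove this statement: it is imported verbatim from Vinson as a cited result. The closest the paper comes to a proof is the later generalization, Theorem~\ref{omegaK(2)^x-odd}, which establishes the same conclusion for every odd $K$ by numerically verifying a small table of $\pi_K(2^x)$, $\alpha_K(2^x)$ for $x\varleq 4$ and then invoking Renault's lifting result (Theorem~\ref{Renault-powersof2}) to get $\pi_K(2^x)=2^{x-2}\pi_K(4)$ and $\alpha_K(2^x)=2^{x-3}\alpha_K(8)$, whence the ratio stabilizes at $2$. Your route is different but sound: you read off the base cases directly and then push $4\mid\pi(2^x)$ and $2\mid\alpha(2^x)$ up from $x=3$ via the two divisibility monotonicity facts, closing with the second bullet of Theorem~\ref{omega_renault}. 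The two monotonicity lemmas you flag are indeed the only things needing justification, and your one-line justifications are the standard ones (the paper itself records the $K$-Fibonacci version as Corollary~\ref{divisibility}); just note that the rank characterization $m\mid F_k\iff\alpha(m)\mid k$ is stated in the paper (Theorem~\ref{p_and_alpha}) only for primes $p$, so you should either cite the general-modulus version or derive it, which is immediate from Lemma~\ref{residue-subtraction}-style reasoning. What your approach buys is independence from the lifting-the-exponent machinery; what the paper's approach buys is a uniform statement for all odd $K$ and explicit closed forms for the period and rank, which your optional alternative via $v_2(F_n)$ would also recover.
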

Renault \cite{Renault} extended Vinson's results, determining the order of any odd prime power.
\begin{theorem}[Renault \cite{Renault}]
    For any odd prime $p$, $\omega(p^e)=\omega(p)$.
\end{theorem}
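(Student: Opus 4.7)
The plan is to reduce the equality $\omega(p^e) = \omega(p)$ to the classical lifting-the-exponent divisibilities
\[\alpha(p) \mid \alpha(p^e) \mid p^{e-1}\alpha(p) \quad \text{and} \quad \pi(p) \mid \pi(p^e) \mid p^{e-1}\pi(p)\]
for an odd prime $p$ and integer $e \geq 1$. The left inclusions are immediate: any $n$ with $p^e \mid F_n$ satisfies $p \mid F_n$, so $\alpha(p) \mid n$, and the sequence modulo $p^e$ being periodic with period $\pi(p^e)$ forces the sequence modulo $p$ to also be periodic with period dividing $\pi(p^e)$. The right inclusions follow from the standard facts that $p^e$ divides $F_{p^{e-1}\alpha(p)}$ and that the Fibonacci sequence modulo $p^e$ repeats after at most $p^{e-1}\pi(p)$ steps. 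Consequently $\alpha(p^e)/\alpha(p) = p^j$ and $\pi(p^e)/\pi(p) = p^k$ for some integers $0 \leq j,k \leq e-1$.

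Because the zeros in any Pisano period are evenly spaced, Wall's identity $\pi(m) = \alpha(m)\omega(m)$ of Theorem \ref{pi = alpha x omega} extends from primes to all moduli. Hence
\[\frac{\omega(p^e)}{\omega(p)} = \frac{\pi(p^e)\,\alpha(p)}{\pi(p)\,\alpha(p^e)} = p^{k-j},\]
which is either an odd integer or the reciprocal of an odd integer. Since $\omega$ takes values in $\{1,2,4\}$, the ratio $\omega(p^e)/\omega(p)$ must lie in $\{1/4,\,1/2,\,1,\,2,\,4\}$, and the only element of this set of the required form is $1$. Therefore $\omega(p^e) = \omega(p)$.

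The main obstacle is the upper divisibility $\alpha(p^e) \mid p^{e-1}\alpha(p)$, together with its analogue for $\pi$. The lower divisibilities are essentially trivial, but the upper bounds require induction on $e$ using standard Fibonacci identities modulo powers of $p$ (for instance, $F_{np} \equiv L_n^{p-1}F_n \pmod{p^2}$ and its refinements). These results are classical in the theory of Lucas sequences and can be invoked as known; once they are in hand, the argument collapses to the single arithmetic observation above.
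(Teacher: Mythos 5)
Your argument is correct and is essentially the same as the one the paper gives (for the $K$-Fibonacci generalization in Theorem \ref{omega-prime-powers}, which specializes to this statement at $K=1$): both reduce to the lifting-the-exponent facts that $\alpha(p^e)/\alpha(p)$ and $\pi(p^e)/\pi(p)$ are powers of $p$, so that $\omega(p^e)/\omega(p)$ is a power of the odd prime $p$, and then observe that a ratio of two elements of $\{1,2,4\}$ can only be such a power if it equals $1$.
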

Daykin \& Dresel\cite{Daykin} discovered the connection between the rank of a prime that divides a Fibonacci index, and the divisibility of that Fibonacci number by the prime.
\begin{theorem}[Daykin \& Dresel\cite{Daykin}]\label{p_and_alpha}
    $p\mid F_n$ if and only if $\alpha(p)\mid n$
\end{theorem}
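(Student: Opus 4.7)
The plan is to reduce both directions to two classical identities on Fibonacci numbers: the divisibility statement $F_m \mid F_n$ whenever $m \mid n$, and the strong divisibility identity $\gcd(F_m, F_n) = F_{\gcd(m,n)}$. Both can be derived from the addition formula $F_{m+n} = F_{m+1}F_n + F_m F_{n-1}$, which a short induction on $n$ establishes.

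For the forward direction, I would assume $\alpha(p) \mid n$ and write $n = k\alpha(p)$. The divisibility identity gives $F_{\alpha(p)} \mid F_n$, and by the very definition of $\alpha(p)$ we have $p \mid F_{\alpha(p)}$, so transitivity yields $p \mid F_n$.

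For the converse, I would assume $p \mid F_n$ and set $d = \gcd(n, \alpha(p))$. Then $p$ divides both $F_n$ and $F_{\alpha(p)}$, hence $p \mid \gcd(F_n, F_{\alpha(p)}) = F_d$ by strong divisibility. Since $d \leq \alpha(p)$ and $\alpha(p)$ is by definition the least positive index with $p \mid F_i$, the minimality forces $d = \alpha(p)$, so $\alpha(p) \mid n$.

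The hard part will be establishing strong divisibility. Given the addition formula, I would show $\gcd(F_m, F_{m+n}) = \gcd(F_m, F_{m+1} F_n) = \gcd(F_m, F_n)$, where the second equality relies on $\gcd(F_m, F_{m+1}) = 1$ (itself a one-line induction from the recurrence). Applying the Euclidean algorithm at the level of indices then promotes this to $\gcd(F_m, F_n) = F_{\gcd(m,n)}$. Once this identity is in hand, both directions of the theorem follow in a single line from the argument above.
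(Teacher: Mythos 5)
Your argument is correct. Note, however, that the paper does not prove this statement at all: it is imported as a cited background result of Daykin \& Dresel, so there is no internal proof to compare against. What you have written is the standard self-contained derivation. The forward direction from $F_m \mid F_n$ for $m \mid n$ is fine, and the converse via $p \mid \gcd(F_n, F_{\alpha(p)}) = F_{\gcd(n,\alpha(p))}$ together with the minimality of the rank correctly forces $\gcd(n,\alpha(p)) = \alpha(p)$, hence $\alpha(p) \mid n$. Your sketch of strong divisibility is also sound: the addition formula gives $\gcd(F_m, F_{m+n}) = \gcd(F_m, F_{m+1}F_n)$, the coprimality $\gcd(F_m, F_{m+1}) = 1$ strips the factor $F_{m+1}$, and running the Euclidean algorithm on indices terminates at $F_{\gcd(m,n)}$. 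The only thing your write-up buys beyond the paper is self-containment; the only thing it costs is that the second half (strong divisibility) is itself a nontrivial classical lemma whose induction you have outlined rather than executed, which is acceptable for a result of this vintage but should be written out if the proof is meant to stand alone.
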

Another useful result for the proof of Theorem \ref{main_theorem} was found by Wyler, who showed the relationship between the rank of a prime and its Pisano period, according to its congruence class modulo~4.
\begin{theorem}[Wyler \cite{Wyler}]\label{order}\ \\
    \begin{itemize}
        \item If $\alpha(p)\equiv2\pmod{4}$, then $\pi(p)=1\cdot\alpha(p)$.\\
        \item If $\alpha(p)\equiv0\pmod{4}$, then $\pi(p)=2\cdot\alpha(p)$.\\
        \item If $\alpha(p)\equiv\pm1\pmod{4}$, then $\pi(p)=4\cdot\alpha(p)$.
    \end{itemize}
\end{theorem}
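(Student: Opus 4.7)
The plan is to combine Theorem~\ref{pi = alpha x omega} with an explicit computation of $F_{\alpha(p)+1}\pmod p$. Setting $\alpha:=\alpha(p)$ and $r:=F_{\alpha+1}\bmod p$, the addition formula $F_{a+b}=F_{a+1}F_b+F_aF_{b-1}$ applied iteratively at $a=\alpha$ (and using $F_{k\alpha}\equiv 0\pmod p$ from Theorem~\ref{p_and_alpha}) gives $F_{k\alpha+1}\equiv r^k\pmod p$. Hence $\omega(p)$ is precisely the multiplicative order of $r$ in $(\ZZ/p\ZZ)^{\ast}$, and it suffices to determine that order in each residue class of $\alpha\pmod 4$ and multiply by $\alpha$.

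Next, Cassini's identity $F_{n-1}F_{n+1}-F_n^2=(-1)^n$ at $n=\alpha$, together with $F_{\alpha+1}\equiv F_{\alpha-1}\pmod p$, yields $r^2\equiv(-1)^\alpha\pmod p$. In the case $\alpha\equiv\pm 1\pmod 4$ this gives $r^2\equiv -1$, so $r$ has order $4$, yielding $\omega(p)=4$ and $\pi(p)=4\alpha(p)$; this is also consistent with the corresponding clause of Theorem~\ref{omega_renault}. When $\alpha$ is even we only get $r\equiv\pm 1\pmod p$, and the real work is pinning down the sign.

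For the even case, write $\alpha=2m$. From the double-angle identity $F_{2m}=F_m(2F_{m+1}-F_m)$ and $p\nmid F_m$ (since $m<\alpha$) we obtain $F_m\equiv 2F_{m+1}\pmod p$. Substituting into $F_{2m+1}=F_{m+1}^2+F_m^2$ gives $r\equiv 5F_{m+1}^2\pmod p$; applying Cassini at $n=m$ together with $F_{m-1}\equiv -F_{m+1}\pmod p$ yields $5F_{m+1}^2\equiv(-1)^{m+1}\pmod p$. Thus $r\equiv(-1)^{m+1}\pmod p$: when $\alpha\equiv 2\pmod 4$ so that $m$ is odd, $r\equiv 1$, giving $\omega(p)=1$ and $\pi(p)=\alpha(p)$; when $\alpha\equiv 0\pmod 4$ so that $m$ is even, $r\equiv -1$, giving $\omega(p)=2$ and $\pi(p)=2\alpha(p)$.

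The main obstacle is the even case: Theorem~\ref{omega_renault} alone cannot separate $\omega(p)=1$ from $\omega(p)=2$ when $\alpha(p)\equiv 2\pmod 4$, since both $\pi(p)=\alpha(p)$ and $\pi(p)=2\alpha(p)$ are individually consistent with its characterizations. The explicit evaluation $F_{\alpha+1}\equiv(-1)^{\alpha/2+1}\pmod p$ via the $F_{2m}=F_m L_m$ identity is therefore the indispensable ingredient; everything else reduces to direct application of earlier results and short Cassini-style manipulations.
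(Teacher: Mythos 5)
Your argument is mathematically sound, but note that the paper does not prove this statement at all: Theorem~\ref{order} is imported from Wyler's paper as a black box, so there is no in-text proof to match. The closest internal analogue is the proof of the $K$-Fibonacci generalization, Theorem~\ref{K-wyler} (together with Theorem~\ref{omega-alpha} and Proposition~\ref{residue-order}), and your proposal agrees with that machinery up to a point: you identify $\omega(p)$ with $\mathrm{ord}_p(r)$ where $r\equiv F_{\alpha+1}$ (this is Proposition~\ref{residue-order} for $K=1$), and you extract $r^2\equiv(-1)^{\alpha}$ from Cassini exactly as in the proof of Theorem~\ref{omega-alpha}. Where you genuinely diverge is the even-rank case. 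The paper settles $\alpha\equiv 2\pmod 4$ by combining the reflection identity $F_{\pi(p)-n}\equiv(-1)^{n+1}F_n$ (Theorem~\ref{negative-fibonacci}) with Lemma~\ref{residue-subtraction} to force $\omega\mid 1-\omega$, and settles $\alpha\equiv 0\pmod4$ indirectly via the $4\mid\pi(p)$ criterion. You instead compute $r$ outright: from $F_{2m}=F_mL_m$ and $p\nmid F_m$ you get $F_m\equiv 2F_{m+1}$, and then $F_{2m+1}=F_{m+1}^2+F_m^2$ plus Cassini at $n=m$ gives the clean closed form $r\equiv(-1)^{\alpha/2+1}\pmod p$, which resolves both even residue classes simultaneously and is arguably more direct than the paper's two-pronged treatment. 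Both routes are valid; yours buys an explicit value of the residue $\beta(p)$ for even rank, while the paper's generalizes more readily since it never needs the half-index identities.

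One small caveat you should make explicit: the final step of each case (concluding $\mathrm{ord}_p(r)=4$ from $r^2\equiv-1$, or $\mathrm{ord}_p(r)=2$ from $r\equiv-1$) requires $p$ odd, since $-1\equiv 1\pmod 2$. Indeed the statement itself fails at $p=2$ (where $\alpha(2)=3\equiv-1\pmod 4$ but $\pi(2)=3=\alpha(2)$), which is the same exceptional case flagged by the ``$m>3$'' hypothesis in Theorem~\ref{omega_renault}; a one-line remark restricting to odd $p$ closes this.
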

Note that the converse of Theorem \ref{order} is also true, allowing Wyler's result to be stated as an if and only if statement.

\begin{corollary}\label{alpha_and_omega}\ \\
    \begin{itemize}
        \item $\omega(p)=4$ if and only if $\alpha(p)\equiv\pm1\pmod{4}$.\\
        \item $\omega(p)=2$ if and only if $\alpha(p)\equiv0\pmod{4}$.\\
        \item $\omega(p)=1$ if and only if $\alpha(p)\equiv2\pmod{4}$.
    \end{itemize}
\end{corollary}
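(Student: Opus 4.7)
The plan is to deduce this corollary directly from Wall's identity $\pi(p) = \alpha(p)\omega(p)$ (Theorem \ref{pi = alpha x omega}) together with Wyler's theorem (Theorem \ref{order}), which already distributes all possible residues of $\alpha(p)$ modulo $4$ among the three cases. Since $\{\pm 1, 2, 0\}$ is a complete set of residues modulo $4$, each prime $p$ falls into exactly one of Wyler's three cases, so the forward direction of each bullet will yield the reverse direction automatically by a trichotomy argument.

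For the forward direction, I would take the three cases of Theorem \ref{order} and in each one divide by $\alpha(p)$ using Theorem \ref{pi = alpha x omega}. If $\alpha(p)\equiv 2\pmod 4$, then $\pi(p)=\alpha(p)$ gives $\omega(p)=1$; if $\alpha(p)\equiv 0\pmod 4$, then $\pi(p)=2\alpha(p)$ gives $\omega(p)=2$; and if $\alpha(p)\equiv \pm 1\pmod 4$, then $\pi(p)=4\alpha(p)$ gives $\omega(p)=4$. This establishes the three implications ``$\alpha(p)\equiv r\pmod 4 \Rightarrow \omega(p)=k_r$'' for the appropriate pairings.

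For the converse of each bullet, I would argue by contrapositive using the trichotomy. Suppose $\omega(p)=4$; then $\alpha(p)$ cannot satisfy $\alpha(p)\equiv 0\pmod 4$ or $\alpha(p)\equiv 2\pmod 4$, since those forward implications already produced $\omega(p)=2$ and $\omega(p)=1$ respectively, and $\omega(p)$ is a single well-defined value (known to lie in $\{1,2,4\}$ by the results cited in the introduction). Hence the only remaining possibility is $\alpha(p)\equiv \pm 1\pmod 4$. The same reasoning handles the $\omega(p)=2$ and $\omega(p)=1$ cases.

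There is essentially no obstacle here: the only subtlety worth flagging is that Wyler's theorem as cited covers every possible residue class of $\alpha(p)$ modulo $4$, so the three forward implications together form a complete case analysis, which is precisely what forces the biconditional. I would conclude with a brief remark that this is the statement referenced in the excerpt as ``the converse of Theorem \ref{order} is also true.''
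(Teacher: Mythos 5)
Your proposal is correct and follows exactly the paper's route: the paper's proof is the one-line remark that the corollary ``follows immediately from Theorems \ref{pi = alpha x omega} and \ref{order},'' and your write-up simply spells out that deduction, including the trichotomy observation that makes the implications reversible.
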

\begin{proof}
    This follows immediately from Theorems \ref{pi = alpha x omega} and \ref{order}.
\end{proof}
While the second and third cases of Corollary \ref{alpha_and_omega} may seem different from Theorem \ref{omega_renault}, a closer inspection reveals that Corollary \ref{alpha_and_omega} implies \ref{omega_renault}. For the first case, when $\alpha(p) \equiv \pm 1 \pmod 4$, the two theorems are identical. If $\alpha(p)\equiv 2 \pmod 4$, then $4 \not \mid \pi(m)$, since $\pi(p)=\alpha(p)$. Likewise, if $\alpha(p) \equiv 0 \pmod 4$, then $2 \mid \alpha(p)$ and $4 \mid \pi(p)$, since $\alpha(p) \mid \pi(p)$.

\subsection{Proof of Conjectures}
With these tools in hand, proof of the conjectures found in the OEIS follow readily. As stated, these conjectures are circular (Conjecture \ref{conj:oeis1} says that 19 has order 1 if and only if 19 has order 1, which is a tautology). Additionally, since 1 is a factor of every $m$, but $\omega(1)=1$, Conjecture \ref{conj:oeis4} would suggest that $\omega(m)$ never equals 4. Hence the idea of factors for these conjectures should exclude $1$ and $m$. The case of primes cannot properly be included in these conjectures. Theorems \ref{new_conj_1} through \ref{new_conj_3} are reformulations of Conjectures \ref{conj:oeis4} and \ref{conj:oeis1}, respectively. 
\begin{theorem}\label{new_conj_1}
    If $m$ is an odd number, all of whose factors (other than $1$ and $m$) have order 4, or $m$ is twice such a number, then $m$ has order $4$.
\end{theorem}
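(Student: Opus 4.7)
The plan is to reduce the computation of $\omega(m)$ to iterated applications of Renault's lcm table (Theorem \ref{thm:Renault}), together with Renault's prime-power equality $\omega(p^e)=\omega(p)$.

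\emph{Odd case.} Suppose $m$ is odd with prime factorization $m=p_1^{e_1}\cdots p_r^{e_r}$, and assume every divisor of $m$ other than $1$ and $m$ has order $4$. Since the hypothesis is non-vacuous, $m$ is composite, and each prime $p_i$ is then a proper non-trivial divisor of $m$, giving $\omega(p_i)=4$ by hypothesis. Renault's prime-power theorem then promotes this to $\omega(p_i^{e_i})=\omega(p_i)=4$. Now $m=\lcm(p_1^{e_1},\ldots,p_r^{e_r})$, and I would induct on $r$: the base case $r=1$ is already done, and in the inductive step, for any two odd prime powers $a,b$ with $\omega(a)=\omega(b)=4$, the $(4,4)$-entry of Theorem \ref{thm:Renault} gives $\omega(\lcm(a,b))=4$ outright, with no "$=2$" exception triggered since neither argument equals $2$. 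Peeling off one prime power at a time yields $\omega(m)=4$.

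\emph{Even case.} Suppose $m=2n$ with $n$ odd satisfying the hypothesis. By the odd case $\omega(n)=4$, and since $\gcd(2,n)=1$ we have $m=\lcm(2,n)$. Theorem \ref{thm:Renault} applied to $\omega(2)=1$ and $\omega(n)=4$ lands in the single cell where the "$=2$" exception is favorably triggered, giving $\omega(m)=4$.

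The main point to watch is the bookkeeping of the "$=2$" conditions in Renault's table: the odd case must never involve the prime $2$ (guaranteed by all $p_i$ being odd, so the $(4,4)$-cell applies without exception), while the even case uses exactly the cell in which the "$=2$" exception flips the answer from $2$ up to $4$. A minor nuisance is that, as the authors note, the hypothesis is vacuously true for odd primes $p$ with $\omega(p)\neq 4$; the statement is therefore implicitly restricted to composite $m$ (equivalently, $m$ having at least one non-trivial divisor), and the argument proceeds under this restriction.
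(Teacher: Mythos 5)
Your proof is correct and follows the same route as the paper, whose entire proof is the one-line citation of Theorems \ref{thm:Renault} and \ref{omega(2)^x}; you simply make explicit the details the paper leaves implicit (lifting $\omega(p_i)=4$ to $\omega(p_i^{e_i})=4$ via Renault's prime-power theorem, iterating the $(4,4)$ cell of the lcm table, and invoking the ``$=2$'' exception for the factor of $2$). Your remark that the statement must be read as restricted to composite $m$ matches the paper's own caveat that ``the case of primes cannot properly be included in these conjectures.''
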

\begin{proof}
    This follows immediately from Theorems \ref{thm:Renault} and \ref{omega(2)^x}.
\end{proof}
\begin{theorem}\label{new_conj_2}
    If $m$ has order $4$, then $m$ is an odd number, all of whose factors (other than 1) have order $4$, or $m$ is twice such a number.
\end{theorem}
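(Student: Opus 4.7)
The plan is to decompose $m$ by its $2$-adic valuation and repeatedly apply Renault's combination table (Theorem~\ref{thm:Renault}) together with Vinson's formula for $\omega(2^x)$ (Theorem~\ref{omega(2)^x}). Write $m = 2^j k$ with $k$ odd; the goal is to show that $j \in \{0,1\}$, that $\omega(k) = 4$, and that every divisor $d > 1$ of $k$ has $\omega(d) = 4$.

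First, rule out $j \geq 2$. If $j \geq 3$, then $\omega(2^j) = 2$, and the row of Table~\ref{VinsonTable} for order~$2$ consists entirely of $2$'s, so $\omega(m) = \omega(\mathrm{lcm}(2^j,k)) = 2$, contradicting $\omega(m) = 4$. If $j = 2$, then $\omega(4) = 1$, and since $k$ is odd we have $k \neq 2$, so the relevant entries of Table~\ref{VinsonTable} force $\omega(m) \in \{1, 2\}$, again a contradiction. Hence $j \in \{0,1\}$. Next, the odd part $k$ itself has order~$4$: if $j = 0$ this is trivial since $k = m$, while if $j = 1$ then $m = \mathrm{lcm}(2,k)$ with $\omega(2) = 1$, and the only entry of Table~\ref{VinsonTable} producing~$4$ with one input of order~$1$ requires that order-$1$ input to equal $2$ and the other to have order~$4$, giving $\omega(k) = 4$.

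It remains to show that every divisor $d > 1$ of $k$ has order~$4$. Factor $k = p_1^{e_1} \cdots p_r^{e_r}$, so that $k = \mathrm{lcm}(p_1^{e_1}, \ldots, p_r^{e_r})$. Since $\omega(p^e) = \omega(p)$ for odd primes, each $p_i^{e_i}$ has order $1$, $2$, or $4$. Iterating Theorem~\ref{thm:Renault}, an order-$2$ factor would force $\omega(k) = 2$, and an odd order-$1$ prime power (necessarily $\neq 2$) combined with any order-$4$ factor again yields order~$2$. As $\omega(k) = 4$, every $\omega(p_i^{e_i}) = 4$. Then for any divisor $d = p_{i_1}^{f_1} \cdots p_{i_s}^{f_s} > 1$ of $k$, iterating the same table on inputs all of order~$4$ gives $\omega(d) = 4$.

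The principal obstacle is the careful bookkeeping in the final step: one must track the exceptional $m = 2$ clause of Renault's table, which is the unique way an order-$1$ input can combine with an order-$4$ input to produce order~$4$, and separate the cases in which an odd prime power could contribute order~$1$ or~$2$ and thereby prevent $\omega(k) = 4$.
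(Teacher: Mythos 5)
Your proof is correct and follows essentially the same route as the paper's: decompose $m$ into $2^j$ times odd prime powers, then apply Vinson's formula for $\omega(2^x)$ (Theorem~\ref{omega(2)^x}) and Renault's lcm table (Theorem~\ref{thm:Renault}) to force $j\leq 1$ and $\omega(p_i^{e_i})=4$. Your version is simply more explicit than the paper's one-line argument, and it additionally spells out the step the paper leaves implicit, namely that every composite divisor of the odd part, being an lcm of order-$4$ prime powers, again has order $4$.
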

\begin{proof}
    Suppose $m$ has order $4$. If $m$ is prime, then all of its factors have order $4$. If $m=2^j\cdot p_1^{e_1}\cdots p_t^{e_t}$, then by Theorem \ref{thm:Renault}, $j\varleq 1$ and $\omega(p_i^{e_i})=4$ for all odd prime factors of $m$. 
\end{proof}

\begin{theorem}\label{new_conj_3}
    If $m$ has order $1$, then $m$ is an odd number, all of whose factors have order 1, or $m$ is twice such a number or four times such a number.
\end{theorem}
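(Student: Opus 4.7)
The plan is to combine the prime factorization $m = 2^j p_1^{e_1}\cdots p_t^{e_t}$ with Renault's lcm table (Theorem \ref{thm:Renault}), Vinson's classification of $\omega(2^x)$ (Theorem \ref{omega(2)^x}), and Renault's identity $\omega(p^e)=\omega(p)$ for odd primes. The strategy is to view $m$ as an iterated lcm of its prime-power components and rule out every configuration whose entries in the lcm table would force order $2$ or $4$.

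First I would bound the power of $2$. If $j\geq 3$, then $\omega(2^j)=2$ by Theorem \ref{omega(2)^x}, and the row of Renault's table corresponding to an argument of order $2$ yields $\omega(\operatorname{lcm}(2^j,n))=2$ for every $n$; iterating this with the odd part of $m$ forces $\omega(m)=2$, contradicting the hypothesis. Hence $j\in\{0,1,2\}$.

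Next I would show $\omega(p_i^{e_i})=1$ for every $i$. Order $2$ is again impossible by the same row-$2$ argument. Ruling out order $4$ is the delicate step, because of the exceptional table entry giving $\omega(\operatorname{lcm}(2,n))=4$ when $\omega(n)=4$. I would split on how many odd prime-power components have order $4$: with exactly one such component and $t=1$, the table yields $\omega(m)=4$ when $j\in\{0,1\}$ (invoking the exception in the latter case) and $\omega(m)=2$ when $j=2$, each contradicting the hypothesis; with several order-$4$ components, or with a single order-$4$ component accompanied by additional order-$1$ odd components, iterating the table on the odd part produces an odd divisor of order $2$ or $4$ (the exception does not apply since all participating odd prime powers are distinct from $2$), and lcm-ing with $2^j$ again yields $\omega(m)\in\{2,4\}$.

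Having established $j\in\{0,1,2\}$ and $\omega(p_i^{e_i})=1$ for every $i$, I would conclude by noting that every divisor of the odd part $p_1^{e_1}\cdots p_t^{e_t}$ has the form $\prod p_i^{f_i}$ with $f_i\leq e_i$; Renault's identity gives $\omega(p_i^{f_i})=\omega(p_i)=1$, and the top-left entry of the table preserves order $1$ under lcm, so every factor of the odd part has order $1$. The three possibilities $j=0,1,2$ then correspond exactly to the three alternatives in the theorem. The main obstacle I anticipate is keeping careful track of the exceptional row-$4$/column-$1$ entry in Renault's table across the combinations of $j$ and the set of odd prime-power components, particularly distinguishing the $j=1$ case (where the exception is triggered) from the $j=2$ case (where it is not).
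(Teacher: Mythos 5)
Your proposal is correct and follows essentially the same route as the paper: factor $m=2^jp_1^{e_1}\cdots p_t^{e_t}$, use Theorem \ref{omega(2)^x} to force $j\leq 2$, and use the lcm table of Theorem \ref{thm:Renault} to force every odd prime-power component to have order $1$ (the paper compresses your case analysis into the single observation that the only entry equal to $1$ in the table is the one where both arguments have order $1$). Your extra closing step, deducing that \emph{every} factor of the odd part has order $1$ via $\omega(p^f)=\omega(p)$ and the table, is a detail the paper leaves implicit, so nothing is missing.
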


\begin{proof}
    Suppose $m=2^j\cdot p_1^{e_1}\cdots p_t^{e_t}$ and $\omega(m)=1$. By Theorem \ref{omega(2)^x}, $j\varleq2$. By Theorem \ref{thm:Renault}, $\omega(ab)=1$ if and only if $\omega(a)=\omega(b)=1$. Hence, $\omega(p_i^{e_i})=1$ for all $1\varleq i \varleq t$. 
\end{proof}

\begin{theorem}
    If $m$ is an odd number, all of whose factors (other than $1$ and $m$) have order 1, or $m$ is twice such a number or four times such a number, then $\omega(m)=1$.
\end{theorem}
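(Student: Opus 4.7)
The plan is to mirror the proof of Theorem \ref{new_conj_1}, using the $\omega = 1$ entries of Renault's lcm table in place of the $\omega = 4$ entries. Write $m = 2^j p_1^{e_1} \cdots p_t^{e_t}$ with $j \in \{0,1,2\}$ and the $p_i$ distinct odd primes. By Theorem \ref{omega(2)^x} together with the convention $\omega(1) = 1$, one has $\omega(2^j) = 1$ for each admissible value of $j$, so the even part of $m$ contributes nothing that needs to be argued.

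Next I would verify that $\omega(p_i^{e_i}) = 1$ for every $i$. Each $p_i^{e_i}$ is a divisor of $m$, so whenever $p_i^{e_i} \neq m$ the hypothesis gives $\omega(p_i^{e_i}) = 1$ directly. The only way $p_i^{e_i}$ could equal $m$ is $j = 0$ and $t = 1$, i.e.\ $m = p^e$ is itself an odd prime power; in that case (which must have $e \geq 2$ for the hypothesis to carry any information) the proper divisor $p$ has $\omega(p) = 1$ by assumption, and the preliminary identity $\omega(p^e) = \omega(p)$ for odd primes forces $\omega(m) = 1$ immediately.

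To finish, I would iterate Theorem \ref{thm:Renault}: the $(1,1)$ entry of Table \ref{VinsonTable} states that whenever $\omega(a) = \omega(b) = 1$, one has $\omega(\lcm[a,b]) = 1$. Applying this first to $a = 2^j$ and $b = p_1^{e_1}$, then to the resulting lcm together with $p_2^{e_2}$, and so on through $p_t^{e_t}$, yields
$$\omega(m) \;=\; \omega\bigl(\lcm[2^j,p_1^{e_1},\ldots,p_t^{e_t}]\bigr) \;=\; 1,$$
as required. There is no substantive obstacle in this direction; the one delicate point is the edge case of $m$ being itself an odd prime power, which the preliminary result $\omega(p^e) = \omega(p)$ dispatches cleanly.
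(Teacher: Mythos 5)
Your proposal is correct and follows the same route as the paper, which simply cites the $(1,1)$ entry of Renault's lcm table (Theorem \ref{thm:Renault}) together with Theorem \ref{omega(2)^x}; you have merely written out the iteration over the coprime factors $2^j, p_1^{e_1},\ldots,p_t^{e_t}$ explicitly. Your explicit treatment of the odd-prime-power edge case via $\omega(p^e)=\omega(p)$ is a detail the paper leaves implicit (and, as the paper itself notes, the case where the odd part is a bare prime is genuinely degenerate and excluded), but it does not change the underlying argument.
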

\begin{proof}
    This follows immediately from Theorems \ref{thm:Renault} and \ref{omega(2)^x}.
\end{proof}

\section{Proof of Theorem \ref{main_theorem}}
\begin{proof}[Proof. (i)]
Suppose $p \mid F_n$ for $n\equiv\pm1\pmod{4}$. By Lemma \ref{p_and_alpha} this is true if and only if $\alpha(p) \mid n$. And for $n\equiv\pm1\pmod{4}$, this holds if and only if $\alpha(p)\equiv\pm1\pmod4$, which by Corollary \ref{alpha_and_omega} is true if and only if $\omega(p)=4$. From Table \ref{thm:Renault}, $\omega(m)=4$ if and only if $m=2^j\cdot p_1^{e_1}\cdots p_r^{e_r}$ for $j\varleq1$ and where $\omega(p_i^{e_i})=4$ for $1\varleq i \varleq r$.
\end{proof}

\begin{proof}[Proof. (ii)]
Suppose that $p \mid F_n$ for $n\equiv2\pmod4$ and $p$ is not also a divisor of some $F_k$ for an odd index $k$. By Lemma \ref{p_and_alpha}, this is true if and only if $\alpha(p) \mid n$ and $\alpha(p) \not\mid k$ for any odd $k$. Hence, $n\equiv2\pmod4$ if and only if $2\mid\alpha(p)$. Because $\alpha(p)\mid n$ and $4\not\mid n$, it follows that $4\not\mid \alpha(p)$, so $\alpha(p)\equiv2\pmod4$. By Corollary \ref{alpha_and_omega}, $\alpha(p)\equiv2\pmod4$ if and only if $\omega(p)=1$. From Table \ref{thm:Renault}, $\omega(m)=1$ for $m=2^j\cdot p_1^{e_1}\cdots p_r^{e_r}$ whenever $j\vargeq3$ and every $p_i^{e_i}$ has order 1.
\end{proof}

\begin{proof}[Proof. (iii)]
This follows directly from the proof of (i) and (ii), but to illuminate the situation, consider $m=2^jp_1^{e_1}\cdots p_t^{e_t}$. By Theorems \ref{omega(2)^x} and \ref{thm:Renault}, if $j\vargeq3$, then $\omega(m)=2$, and if even a single $p_i^{e_i}$ has order $2$, then $\omega(m)=2$. This absorbs all divisors of $F_n$ for indices $n\equiv0\pmod4$ as well as multiples of $2^x$ times any divisor of $F_n$ for $x\vargeq3$, among other numbers.
\end{proof}

\section{K-Fibonacci and (a,b)-Fibonacci Sequences}

Generalizations of the Fibonacci sequence are categorized according to which variables of a binary recurrence sequence are fixed. In particular, Renault \cite{Renault2} extends many properties of the Pisano period to $(a,b)$-Fibonacci sequences. These are sequences where $F_0=0$, $F_1=1$, and $F_n=aF_{n-1}+bF_{n-2}$ for positive integers $a$ and $b$. Unlike the Fibonacci sequence where $\omega(m)$ can only equal 1, 2, or 4, such loose restrictions for $a$ and $b$ allow the number of zeros in a Pisano period to take on infinitely many values.
\begin{theorem}[Renault \cite{Renault2}]\label{(a,b)-order}
    $\omega(m)\mid2\cdot\text{ord}_m(-b)$
\end{theorem}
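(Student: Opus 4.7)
The plan is to prove this via the companion matrix formalism. Set $M = \begin{pmatrix} a & b \\ 1 & 0 \end{pmatrix}$, which governs the recurrence in the sense that $\begin{pmatrix} F_{n+1} \\ F_n \end{pmatrix} = M \begin{pmatrix} F_n \\ F_{n-1} \end{pmatrix}$. A direct induction gives the explicit form
\begin{equation*}
M^n = \begin{pmatrix} F_{n+1} & bF_n \\ F_n & bF_{n-1} \end{pmatrix},
\end{equation*}
and $\det(M) = -b$. The Pisano period $\pi(m)$ is precisely the multiplicative order of $M$ in $GL_2(\mathbb{Z}/m\mathbb{Z})$ (noting that $\gcd(b,m)=1$ is needed for $\text{ord}_m(-b)$ to make sense, so $M$ is indeed a unit).

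Next, I would specialize at $n = \alpha(m)$. Since $F_\alpha \equiv 0 \pmod m$, the off-diagonal entries of $M^\alpha$ vanish modulo $m$. Moreover, the recurrence forces $F_{\alpha+1} = aF_\alpha + bF_{\alpha-1} \equiv bF_{\alpha-1} \pmod m$, so both diagonal entries coincide and
\begin{equation*}
M^\alpha \equiv \lambda I \pmod m, \qquad \text{where } \lambda := F_{\alpha+1} \equiv bF_{\alpha-1}.
\end{equation*}
From here it follows that the multiplicative order of $M$ modulo $m$ factors as $\pi(m) = \alpha(m) \cdot \text{ord}_m(\lambda)$, and therefore $\omega(m) = \pi(m)/\alpha(m) = \text{ord}_m(\lambda)$.

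Finally, I would take determinants of the congruence $M^\alpha \equiv \lambda I$ to obtain
\begin{equation*}
\lambda^2 \equiv \det(M^\alpha) = (-b)^\alpha \pmod m.
\end{equation*}
Raising both sides to the $d$-th power for $d := \text{ord}_m(-b)$ yields $\lambda^{2d} \equiv (-b)^{\alpha d} \equiv 1 \pmod m$, so $\omega(m) = \text{ord}_m(\lambda)$ divides $2d = 2\,\text{ord}_m(-b)$, as claimed.

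The only genuinely delicate step is verifying that $M^\alpha$ reduces to a \emph{scalar} multiple of the identity, not merely to a diagonal matrix; this relies crucially on the identity $F_{\alpha+1} \equiv bF_{\alpha-1}$ coming straight from the recurrence at the rank. Everything else is a bookkeeping check: the matrix form of $M^n$, the fact that $\pi(m)$ is the order of $M$, and the determinant computation. No deep properties of $\omega$ beyond its definition as $\pi(m)/\alpha(m)$ are needed, which is why this argument generalizes painlessly from the classical case to arbitrary $(a,b)$-Fibonacci sequences.
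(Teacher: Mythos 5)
The paper states this result as a citation to Renault and offers no proof of its own, so there is nothing internal to compare against; judged on its own terms, your argument is correct and is essentially the standard matrix-theoretic proof (it is in fact the route Renault takes). The key facts all check out: the closed form $M^n = \left(\begin{smallmatrix} F_{n+1} & bF_n \\ F_n & bF_{n-1}\end{smallmatrix}\right)$, the identification of $\pi(m)$ with the order of $M$ in $GL_2(\mathbb{Z}/m\mathbb{Z})$ (valid exactly when $\gcd(b,m)=1$, which the statement already presupposes), the scalar reduction $M^{\alpha}\equiv \lambda I$ with $\lambda = F_{\alpha+1}\equiv bF_{\alpha-1}$, and the determinant step $\lambda^2\equiv(-b)^{\alpha}$. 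Two small points deserve one more line each if you write this up. First, the assertion $\pi(m)=\alpha(m)\cdot\mathrm{ord}_m(\lambda)$ needs the observation that every $n$ with $M^n\equiv I$ satisfies $\alpha(m)\mid n$; this itself follows from $M^{\alpha}\equiv\lambda I$ with $\lambda$ a unit (write $n=q\alpha+s$ and conclude $F_s\equiv 0$), so it costs nothing but should be said. Second, $\omega(m)$ is defined in the paper as the number of zeros in one period rather than as $\pi(m)/\alpha(m)$; the equality of the two is exactly the content of the cited identity $\pi_K(m)=\alpha_K(m)\omega_K(m)$ (Theorem~\ref{period-product}), and it too drops out of the same scalar-matrix observation since the zeros in one period occur precisely at the multiples of $\alpha(m)$. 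With those two remarks folded in, the proof is complete and self-contained, and it has the virtue of making transparent why the bound is $2\cdot\mathrm{ord}_m(-b)$: the factor $2$ is just the determinant of a $2\times 2$ scalar matrix.
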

The reason $\omega(m)$ only takes values 1, 2, and 4 in the Fibonacci sequence is because $b=1$ and $\text{ord}_m(-1)=2$ for all $m \vargeq 3$. Hence, $\omega(m) \mid 4$ and 1, 2, and 4 are the only divisors. There is a well-studied category of binary recurrences where $b$ is fixed at $1$ while $a$ varies. Traditionally in this case, $a$ is swapped out for $K$, and these binary recurrences are known as $K$-Fibonacci sequences, where $F_{K,0}=0$, $F_{K,1}=1$, and $F_{K,n} = KF_{K,n-1}+F_{K,n-2}$. Because $b$ is fixed at 1, $\omega(m)=1$, 2, or 4 for all $K$. Denote the $K$-Pisano period, $K$-order, and $K$-rank of a positive integer $m$ by $\pi_K(m)$, $\omega_K(m)$, and $\alpha_K(m)$, respectively. Several important properties generalize to all $(a,b)$-Fibonacci sequences, which are stated here as they pertain to $K$-Fibonacci sequences:

\begin{theorem}\label{period-product}(Renault \cite{Renault2})
    For all $K$ and $m$, $\pi_K(m)=\alpha_K(m)\omega_K(m)$.
\end{theorem}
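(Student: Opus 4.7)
The plan is to show that the zeros of the sequence $(F_{K,n} \bmod m)_{n \geq 0}$ occur precisely at the nonnegative multiples of $\alpha_K(m)$. Once this is established, the $\omega_K(m)$ zeros inside one Pisano period are forced to be evenly spaced with gap $\alpha_K(m)$, and the identity $\pi_K(m) = \alpha_K(m)\omega_K(m)$ follows by counting.

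The first ingredient is the $K$-Fibonacci addition identity
\[
F_{K,s+t} = F_{K,s+1}F_{K,t} + F_{K,s}F_{K,t-1},
\]
which I would prove by a routine induction on $t$ using the recurrence. Setting $s = (q-1)\alpha_K(m)$ and $t = \alpha_K(m)$ and inducting on $q$ immediately gives $F_{K,q\alpha_K(m)} \equiv 0 \pmod{m}$ for all $q \geq 1$, so every multiple of $\alpha_K(m)$ indexes a zero. For the converse, write an arbitrary index with $m \mid F_{K,n}$ as $n = q\alpha_K(m) + r$ with $0 \leq r < \alpha_K(m)$. The addition identity reduces the condition to
\[
F_{K,q\alpha_K(m)+1}\,F_{K,r} \equiv 0 \pmod{m}.
\]

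To conclude $r = 0$, I need $F_{K,q\alpha_K(m)+1}$ to be a unit modulo $m$. This is where the key observation is the matrix identity $\begin{pmatrix} K & 1 \\ 1 & 0 \end{pmatrix}^{\!n} = \begin{pmatrix} F_{K,n+1} & F_{K,n} \\ F_{K,n} & F_{K,n-1} \end{pmatrix}$, which has determinant $(-1)^n$, yielding the Cassini-type relation $F_{K,n+1}F_{K,n-1} - F_{K,n}^2 = (-1)^n$. Thus $\gcd(F_{K,n},F_{K,n+1}) = 1$ for all $n$, and in particular any common divisor of $m$ and $F_{K,q\alpha_K(m)+1}$ would also divide $F_{K,q\alpha_K(m)}$ and hence equal $1$. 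Therefore $F_{K,r} \equiv 0 \pmod m$, and the minimality of $\alpha_K(m)$ forces $r = 0$.

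Finally, since the sequence modulo $m$ is purely periodic with period $\pi_K(m)$ and $F_{K,0} = 0$, also $F_{K,\pi_K(m)} \equiv 0$, so $\alpha_K(m) \mid \pi_K(m)$. The zeros inside one period $\{0, 1, \dots, \pi_K(m)-1\}$ are exactly the multiples of $\alpha_K(m)$ in that range, giving $\omega_K(m) = \pi_K(m)/\alpha_K(m)$. The main obstacle is the coprimality step: one must verify that the matrix/Cassini argument carries over intact from the Fibonacci case (which it does because fixing $b=1$ keeps the determinant equal to $\pm 1$), and then use it at the specific index $q\alpha_K(m)+1$ to invert $F_{K,q\alpha_K(m)+1}$ modulo $m$. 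Everything else is bookkeeping with the addition formula.
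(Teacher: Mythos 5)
Your argument is correct and complete. The paper itself gives no proof of this statement --- it is quoted from Renault's work on $(a,b)$-Fibonacci sequences --- so there is no in-paper argument to compare against line by line. What you have written is the standard self-contained proof, and it is worth noting that it runs parallel to machinery the paper develops later for other purposes: your congruence $F_{K,q\alpha_K(m)+r}\equiv F_{K,q\alpha_K(m)+1}F_{K,r}\pmod m$ is exactly Lemma \ref{residue-subtraction} iterated $q$ times, since $F_{K,q\alpha_K(m)+1}\equiv\beta_K(m)^q$; your coprimality step is the paper's observation that $\gcd(\beta_K(m),m)=1$; and your converse direction is the first half of Theorem \ref{period-divisibility}. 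The one thing you get that the paper's later development does not cleanly supply is the actual assembly into $\pi_K(m)=\alpha_K(m)\omega_K(m)$ (the paper's Proposition \ref{residue-order} essentially presupposes this identity when it equates ``the $r$ at which the period resets'' with the number of zeros). Two small points to make explicit: pure periodicity of the sequence modulo $m$, which you invoke at the end, follows from the invertibility of $U(K)$ modulo $m$ (its determinant $-1$ is a unit), i.e.\ from the same Cassini/matrix fact you already use; and the existence of $\alpha_K(m)$ follows from $F_{K,\pi_K(m)}\equiv F_{K,0}=0$, so the division $n=q\alpha_K(m)+r$ is always available. With those sentences added the proof is airtight, and it is a legitimate improvement to have a proof rather than a citation.
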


\begin{theorem}\label{period-rank-lcm}(Renault \cite{Renault2})
    For all $K$ and $m$, 
    \begin{itemize}
        \item $\pi_K(\lcm[m,n])=\lcm[\pi_K(m),\pi_K(n)]$\\
        \item $\alpha_K(\lcm[m,n])=\lcm[\alpha_K(m),\alpha_K(n)]$.
    \end{itemize}
\end{theorem}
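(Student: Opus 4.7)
The plan is to prove both assertions via the same structural observation: each of $\pi_K(m)$ and $\alpha_K(m)$ is characterized by a simple divisibility condition that respects taking moduli, so the lcm identity falls out of the elementary fact that $a \equiv b \pmod{\lcm[m,n]}$ if and only if $a \equiv b$ modulo each of $m$ and $n$ separately.

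For the period, I would first record the standard characterization that $\pi_K(m) \mid j$ if and only if $F_{K,j} \equiv 0 \pmod m$ and $F_{K,j+1} \equiv 1 \pmod m$ (the pair $(F_{K,j},F_{K,j+1})$ matching the initial state $(0,1)$ forces the whole sequence to shift back to its start modulo $m$). Then $\pi_K(\lcm[m,n]) \mid j$ unfolds to four congruences---$F_{K,j} \equiv 0$ and $F_{K,j+1} \equiv 1$ modulo each of $m$ and $n$---which is equivalent to $\pi_K(m) \mid j$ and $\pi_K(n) \mid j$, i.e., to $\lcm[\pi_K(m),\pi_K(n)] \mid j$. Taking $j$ to be the smallest positive such integer yields the desired equality. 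For the rank, the key step is the general-modulus analogue of Theorem \ref{p_and_alpha}: $m \mid F_{K,n}$ if and only if $\alpha_K(m) \mid n$. Given this, $\lcm[m,n] \mid F_{K,j}$ iff $\alpha_K(m) \mid j$ and $\alpha_K(n) \mid j$, iff $\lcm[\alpha_K(m),\alpha_K(n)] \mid j$, and extracting the minimal positive $j$ gives $\alpha_K(\lcm[m,n]) = \lcm[\alpha_K(m),\alpha_K(n)]$.

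The main obstacle is to justify the generalized divisibility claim $m \mid F_{K,n}$ iff $\alpha_K(m) \mid n$ rigorously in the $K$-Fibonacci setting, since the version cited in the excerpt is stated only for primes. The cleanest route is to show that the zero set $Z_m := \{n \in \ZZ : F_{K,n} \equiv 0 \pmod m\}$ is a subgroup of $\ZZ$: closure under addition follows from the $K$-Fibonacci addition identity $F_{K,a+b} = F_{K,a+1}F_{K,b} + F_{K,a}F_{K,b-1}$, since both summands vanish mod $m$ when $F_{K,a} \equiv F_{K,b} \equiv 0$, and closure under negation from the analogous identity expressing $F_{K,-n}$ as a signed multiple of $F_{K,n}$. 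Every subgroup of $\ZZ$ has the form $d\ZZ$, and here $d$ is forced to be the least positive element of $Z_m$, namely $\alpha_K(m)$. Once this structural fact is in hand, both halves of the theorem follow immediately from the iff-chains above.
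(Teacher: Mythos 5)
Your proposal is correct, and it is worth noting that the paper itself offers no proof of this statement at all: it is stated with an attribution to Renault (and restated later as Theorem \ref{period-lcm} with the remark that Renault proves it for general $(a,b)$-Fibonacci sequences). So your argument is a genuine, self-contained substitute rather than a variant of something in the text. The period half is exactly the "return to the initial state $(0,1)$" characterization; the only hypothesis you should make explicit is pure periodicity, which holds here because $b=1$ makes the recurrence invertible ($F_{K,n-2}=F_{K,n}-KF_{K,n-1}$), so the set $\{j:\pi_K(m)\mid j\}$ really is all of $\{j: F_{K,j}\equiv 0,\ F_{K,j+1}\equiv 1 \pmod m\}$ and the two ideals of $\ZZ$ you compare coincide. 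The rank half hinges on upgrading the divisibility criterion from primes (Theorem \ref{p_and_alpha}) to arbitrary moduli; your subgroup argument via the addition identity $F_{K,a+b}=F_{K,a+1}F_{K,b}+F_{K,a}F_{K,b-1}$ (which follows from the matrix form in Theorem \ref{k-fibonacci-matrix}) and the negation identity $F_{K,-n}=(-1)^{n+1}F_{K,n}$ is sound, and it is in fact slightly cleaner than what the paper provides elsewhere: the paper's Theorem \ref{period-divisibility} gives only the direction $m\mid F_{K,n}\Rightarrow \alpha_K(m)\mid n$, with the converse recoverable from Lemma \ref{residue-subtraction}. Two small things to state explicitly for completeness: $\alpha_K(m)$ exists because $F_{K,\pi_K(m)}\equiv F_{K,0}\equiv 0\pmod m$, and the final step in each half uses that two subgroups $d\ZZ=d'\ZZ$ force $d=d'$ for positive generators.
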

There is a class of sequences that plays a role similar to that of the Lucas sequence for any generalized Fibonacci sequence. Fiebig, Mbirika, \& Spilker \cite{FMS} define a companion sequence for any $(a,b)$-Fibonacci sequence; in particular, the $K$-Lucas sequence is defined as the companion sequence for the $K$-Fibonacci sequence:

\begin{definition}\label{K-lucas}
    The $K$-Lucas sequence is the binary recurrence sequence $L_{K,n}$ with $L_{K,0}=2$, $L_{K,1}=K$, and $L_{K,n}=KL_{K,n-1}+L_{K,n-2}$ for $n \vargeq 2$.
\end{definition}

The $K$-Lucas sequence has many remarkable relationships to the $K$-Fibonacci sequence. Presented here are two identities that will be useful later.

\begin{lemma}\label{lucas}(Fiebig, Mbirika, and Spilker \cite{FMS})
    For any $K$ and $n$, the following holds:
    \begin{itemize}
        \item $F_{K,2n} = F_{K,n}L_{K,n}$
        \item $L_{K,n} = F_{K,n+1}+F_{K,n-1}$.
    \end{itemize}
\end{lemma}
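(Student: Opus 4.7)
The plan is to derive Binet-like closed forms for $F_{K,n}$ and $L_{K,n}$ from their shared recurrence $x_n = Kx_{n-1} + x_{n-2}$, whose characteristic polynomial $t^2 - Kt - 1$ has distinct real roots $\phi$ and $\psi$ satisfying $\phi + \psi = K$ and, crucially, $\phi\psi = -1$. Matching the given initial conditions of each sequence yields $F_{K,n} = (\phi^n - \psi^n)/(\phi - \psi)$ and $L_{K,n} = \phi^n + \psi^n$.

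With these in hand, the first identity is immediate via difference of squares: $F_{K,n}L_{K,n} = (\phi^n - \psi^n)(\phi^n + \psi^n)/(\phi - \psi) = (\phi^{2n} - \psi^{2n})/(\phi - \psi) = F_{K,2n}$. For the second identity, I would clear denominators and compare $(\phi - \psi)(F_{K,n+1} + F_{K,n-1}) = \phi^{n+1} - \psi^{n+1} + \phi^{n-1} - \psi^{n-1}$ with $(\phi - \psi)L_{K,n} = \phi^{n+1} + \phi\psi^n - \psi\phi^n - \psi^{n+1}$; their difference is $(1 + \phi\psi)(\phi^{n-1} - \psi^{n-1})$, which vanishes because $\phi\psi = -1$.

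A purely inductive alternative avoids Binet entirely: establish the second identity first, noting that $a_n := F_{K,n+1} + F_{K,n-1}$ inherits the common recurrence from linearity of the $K$-Fibonacci recurrence, and checking $a_1 = K = L_{K,1}$ and $a_2 = K^2 + 2 = L_{K,2}$ by direct computation. Then a standard $K$-Fibonacci addition formula $F_{K,m+n} = F_{K,m+1}F_{K,n} + F_{K,m}F_{K,n-1}$ (itself a short induction on $n$) specialized at $m = n$ yields $F_{K,2n} = F_{K,n}(F_{K,n+1} + F_{K,n-1}) = F_{K,n}L_{K,n}$, giving the first identity as a corollary. I expect no serious obstacle: the entire argument pivots on the algebraic fact $\phi\psi = -1$, equivalently the $b = 1$ coefficient in the general setting of Theorem \ref{(a,b)-order}, and the only bookkeeping care is tracking this sign through the cross terms in the second identity.
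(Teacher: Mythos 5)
Your argument is correct. Note that the paper does not actually prove this lemma --- it is imported verbatim from Fiebig, Mbirika, and Spilker \cite{FMS} --- so there is no in-paper proof to compare against; what you have supplied is a self-contained verification. The Binet route is sound: the characteristic polynomial $t^2-Kt-1$ has discriminant $K^2+4>0$, so $\phi\neq\psi$ for every integer $K$ and the closed forms $F_{K,n}=(\phi^n-\psi^n)/(\phi-\psi)$ and $L_{K,n}=\phi^n+\psi^n$ are legitimate; the difference-of-squares step gives the first identity, and your cross-term bookkeeping for the second is right, since $\psi\phi^n-\phi\psi^n=\phi\psi\left(\phi^{n-1}-\psi^{n-1}\right)$ makes the discrepancy exactly $(1+\phi\psi)\left(\phi^{n-1}-\psi^{n-1}\right)=0$. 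Your inductive alternative also works, and in fact meshes better with the paper's own toolkit: the addition formula $F_{K,m+n}=F_{K,m+1}F_{K,n}+F_{K,m}F_{K,n-1}$ drops out of comparing entries in $U^{m+n}(K)=U^m(K)\,U^n(K)$ via Theorem \ref{k-fibonacci-matrix}, exactly as the paper extracts the Cassini-type identity in Theorem \ref{log-fibonacci}, so that version requires no irrational quantities at all. The only pedantic point worth flagging is that the second identity as stated involves $F_{K,n-1}$ and so implicitly assumes $n\geq 1$ (or an extension to negative indices), but this matches how the lemma is used in the paper.
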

To extend these results to $K$-Fibonacci sequences, here some new notation is introduced. 
\begin{definition}
    Let the first number that appears after the first zero in the Pisano period modulo $m$ (excluding the initial zero) be the $K$-Fibonacci residue of $m$, denoted $\beta_K(m)$.
\end{definition}
For example, the residue of $5$ in the classic Fibonacci sequence is $3$ (in other notation, $\beta_1(5)=3$) because the sequence modulo 5 begins $0, 1, 1, 2, 3, 0, \boxed{3}, \ldots$. The residue has an interesting connection to the order, which follows from a preliminary lemma:
\begin{lemma}\label{residue-subtraction}
    For any $m$, $n$, and $K$, where $n \vargeq \alpha_K(m)$, $F_{K,n}\equiv\beta_K(m)F_{K,n-\alpha_K(m)} \pmod m$.
\end{lemma}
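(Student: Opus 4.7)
The plan is to induct on $n \geq \alpha_K(m)$ using the $K$-Fibonacci recurrence, linearly propagating forward from two explicit base cases at indices $\alpha_K(m)$ and $\alpha_K(m)+1$. Write $\alpha = \alpha_K(m)$ and $\beta = \beta_K(m)$ for brevity throughout. By Definition of $\alpha_K(m)$, the term $F_{K,\alpha}$ is the first zero of the $K$-Fibonacci sequence modulo $m$ after the initial $F_{K,0}$, and by Definition of the $K$-Fibonacci residue, the term immediately following that zero in the sequence modulo $m$ is $\beta$; in other words, $F_{K,\alpha} \equiv 0 \pmod m$ and $F_{K,\alpha+1} \equiv \beta \pmod m$.

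Next I verify the base cases $n = \alpha$ and $n = \alpha+1$ of the claimed congruence. At $n = \alpha$, the left-hand side is $F_{K,\alpha} \equiv 0 \pmod m$, while the right-hand side is $\beta \cdot F_{K,0} = \beta \cdot 0 = 0$, so they agree. At $n = \alpha + 1$, the left-hand side is $F_{K,\alpha+1} \equiv \beta \pmod m$, and the right-hand side is $\beta \cdot F_{K,1} = \beta \cdot 1 = \beta$, so again they agree.

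For the inductive step, I suppose the congruence holds at indices $n$ and $n-1$ for some $n \geq \alpha+1$, and I use the recurrence $F_{K,n+1} = K F_{K,n} + F_{K,n-1}$. Applying the two inductive hypotheses and then the same recurrence shifted by $\alpha$,
\begin{align*}
F_{K,n+1} &= K F_{K,n} + F_{K,n-1} \\
&\equiv K \beta F_{K,n-\alpha} + \beta F_{K,n-1-\alpha} \\
&= \beta \bigl( K F_{K,n-\alpha} + F_{K,n-1-\alpha} \bigr) \\
&= \beta F_{K,n+1-\alpha} \pmod m,
\end{align*}
which is exactly the claim at index $n+1$. This completes the induction.

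I do not expect any serious obstacle here; the only point that demands care is establishing the base cases cleanly, since the statement is essentially a linearity-of-the-recurrence observation bootstrapped from the behavior of the sequence at the rank. It may be worth remarking afterward that an equivalent route is to invoke the $K$-Fibonacci addition identity $F_{K,r+s} = F_{K,r+1}F_{K,s} + F_{K,r}F_{K,s-1}$ (itself provable by the same kind of induction) with $r = \alpha$ and $s = n - \alpha$, killing the $F_{K,\alpha}$ term modulo $m$ to obtain the same conclusion, but the direct induction above is the most economical.
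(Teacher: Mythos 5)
Your proof is correct and follows essentially the same route as the paper's: both establish the base cases at $n=\alpha_K(m)$ and $n=\alpha_K(m)+1$ from the definitions of the rank and the residue, and then propagate forward by a two-term induction using the $K$-Fibonacci recurrence. The remark about the addition identity is a nice alternative but not needed.
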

\begin{proof}
    It is useful to remark that $\alpha_K(m)$, or the rank, is the index of the first zero in the Pisano period modulo $m$. Note that $\beta_K(m) \equiv F_{K,\alpha_K(m)+1} \pmod m$. As a base case for an inductive proof, note that $F_{K,\alpha_K(m)} \equiv 0 \equiv \beta_K(m)F_{K,0} \pmod m$ and $F_{K,\alpha_K(m)+1}\equiv\beta_K(m)\equiv\beta_K(m)F_{K,1} \pmod m$, since $F_{K,1}=1$. Then, suppose $F_{K,n}\equiv\beta_K(m)F_{K,n-\alpha_K(m)} \pmod m $ and $F_{K,n-1}\equiv\beta_K(m)F_{K,n-1-\alpha_K(m)} \pmod m$. By definition, \[F_{K,n+1}\equiv K\beta_K(m)F_{K,n-\alpha_K(m)}+\beta_K(m)F_{K,n-1-\alpha_K(m)} \pmod m\]
    \[ = \beta_K(m)(KF_{K,n-\alpha_K(m)}+F_{K,n-1-\alpha_K(m)})\]
    \[ = \beta_K(m)F_{K,n+1-\alpha_K(m)},\]
    which is the statement of the lemma with $n+1$ substituted for $n$.
\end{proof}
Lemma \ref{residue-subtraction} is a $K$-Fibonacci analog of Renault's Identity 3.26 \cite{Renault}.
\begin{proposition}\label{residue-order}
    For any $m$ and $K$, $\omega_K(m)=\text{ord}_m(\beta_K(m))$.
\end{proposition}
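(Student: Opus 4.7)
The plan is to iterate Lemma \ref{residue-subtraction} to describe the state of the recurrence at every multiple of the rank. A straightforward induction on $j$, with base case $j=1$ being the lemma itself and inductive step another application of the same lemma, yields
\[F_{K,\,j\alpha_K(m)+r} \equiv \beta_K(m)^j\, F_{K,r} \pmod{m}\]
for all integers $j,r\geq 0$. Specializing to $r=0$ and $r=1$ shows that the consecutive pair $(F_{K,\,j\alpha_K(m)},\,F_{K,\,j\alpha_K(m)+1})$ is congruent to $(0,\,\beta_K(m)^j)$ modulo $m$.

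Next, I would identify $\omega_K(m)$ with the first return of this pair to $(0,1)$. The Pisano period $\pi_K(m)$ is by definition the smallest positive index at which the consecutive pair returns to $(0,1)\pmod{m}$, and by Theorem \ref{period-product} it equals $\alpha_K(m)\omega_K(m)$. Since every zero of the sequence modulo $m$ lies at a multiple of $\alpha_K(m)$, the smallest positive $j$ for which $(0,\beta_K(m)^j)\equiv(0,1)\pmod m$ is exactly $\omega_K(m)$. Equivalently, $\omega_K(m)$ is the least positive $j$ with $\beta_K(m)^j\equiv 1\pmod{m}$, which is by definition $\text{ord}_m(\beta_K(m))$, proving the proposition.

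The one technical prerequisite is to confirm that $\text{ord}_m(\beta_K(m))$ is well-defined, i.e.\ that $\gcd(\beta_K(m),m)=1$. This follows from a short propagation argument: any common divisor $d$ of $m$ and two consecutive terms of the sequence persists under the recurrence $F_{K,n+2}=KF_{K,n+1}+F_{K,n}$, so starting from the pair $(0,\beta_K(m))$ at index $\alpha_K(m)$, every subsequent pair modulo $m$ is divisible by $\gcd(\beta_K(m),m)$; the eventual return to $(0,1)$ then forces this gcd to equal $1$. I do not anticipate a substantive obstacle here; Lemma \ref{residue-subtraction} does the real work, and the rest is periodicity bookkeeping through Theorem \ref{period-product}.
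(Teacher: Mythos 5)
Your proposal is correct and follows essentially the same route as the paper: iterate Lemma \ref{residue-subtraction} to obtain $F_{K,\,r\alpha_K(m)+1}\equiv\beta_K(m)^r\pmod m$ and read off $\omega_K(m)$ as the least $r$ for which the pair returns to $(0,1)$. Your version is slightly more careful in two spots the paper glosses over --- justifying via Theorem \ref{period-product} that the first return occurs exactly at $j=\omega_K(m)$, and verifying $\gcd(\beta_K(m),m)=1$ so the order is well-defined --- but the underlying argument is identical.
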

\begin{proof}
    The Pisano period is complete when two consecutive entries in the $K$-Fibonacci sequence modulo $m$ are $0$ and $1$. By definition, the number appearing after the first zero, $F_{K,\alpha_K(m)+1}$ is equivalent to $\beta_K(m)$ modulo $m$. By repeated application of Lemma \ref{residue-subtraction}, $F_{K,\alpha_K(m)r+1} = \left(\beta_K(m)\right)^r$. The Pisano period resets when $\left(\beta_K(m)\right)^r \equiv 1 \pmod m$, so $r = \omega_K(m) = \text{ord}_m(\beta_K(m))$.
\end{proof}

The existence of the order establishes that $\gcd\left[\beta_K(m),m\right]=1$. There is also a matrix that parameterizes the $K$-Fibonacci sequence for any $K$, which allows a useful identity for the Fibonacci sequence to be proven for the $K$-Fibonacci sequence:

\begin{theorem}\label{k-fibonacci-matrix} (Cerda-Morales \cite{cerda-morales}).
    Let $U(K)=\left(\begin{matrix}
        K&1\\
        1&0
    \end{matrix}\right)$, then for any integer $n\vargeq1$, \[U^n(K) =
    \left(\begin{matrix}
        F_{K,n+1}&F_{K,n}\\
        F_{K,n}&F_{K,n-1}
    \end{matrix}\right).\] 
\end{theorem}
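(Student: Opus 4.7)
The plan is to prove this by straightforward induction on $n$, since the matrix identity is essentially a compact encoding of the $K$-Fibonacci recurrence relation $F_{K,n+1} = K F_{K,n} + F_{K,n-1}$. The right-multiplication by $U(K)$ of a matrix with Fibonacci-like entries performs exactly one step of the recurrence on each row.

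For the base case $n=1$, I would simply compute both sides: $U^1(K) = \left(\begin{smallmatrix} K & 1 \\ 1 & 0 \end{smallmatrix}\right)$, while the right-hand side reads $\left(\begin{smallmatrix} F_{K,2} & F_{K,1} \\ F_{K,1} & F_{K,0} \end{smallmatrix}\right)$. By the defining values $F_{K,0}=0$, $F_{K,1}=1$, and the recurrence $F_{K,2}=K F_{K,1} + F_{K,0} = K$, these two matrices agree.

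For the inductive step, assume the identity holds at some $n \vargeq 1$. Then
\[
U^{n+1}(K) = U^n(K)\,U(K) = \begin{pmatrix} F_{K,n+1} & F_{K,n} \\ F_{K,n} & F_{K,n-1} \end{pmatrix}\begin{pmatrix} K & 1 \\ 1 & 0 \end{pmatrix} = \begin{pmatrix} K F_{K,n+1} + F_{K,n} & F_{K,n+1} \\ K F_{K,n} + F_{K,n-1} & F_{K,n} \end{pmatrix},
\]
and applying the $K$-Fibonacci recurrence to the left column rewrites this as $\left(\begin{smallmatrix} F_{K,n+2} & F_{K,n+1} \\ F_{K,n+1} & F_{K,n} \end{smallmatrix}\right)$, which is the claimed formula with $n$ replaced by $n+1$.

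There is no real obstacle here; the only point requiring a moment's care is aligning indices in the base case so that the bottom-right entry $F_{K,0}$ uses the initial condition from the definition of the sequence. The result could equivalently be viewed as a consequence of the general theory of companion matrices for degree-two linear recurrences, but the direct induction above is cleaner and self-contained.
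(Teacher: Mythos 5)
Your proof is correct: the base case and the matrix multiplication in the inductive step both check out, and the argument is the standard induction one would expect. The paper itself gives no proof of this statement---it is quoted as a known result of Cerda-Morales---so there is nothing to compare against; your self-contained induction is a perfectly good substitute for the citation.
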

The classic Fibonacci sequence is said to be \textit{log-Fibonacci}, meaning it is log-convex at even indices and log-concave at odd indices. This property is likewise true for $K$-Fibonacci sequences.
\begin{theorem}\label{log-fibonacci}
    For all $n$ and $K$, $F_{K,n}^2-F_{K,n+1}F_{K,n-1}=(-1)^{n+1}$.
\end{theorem}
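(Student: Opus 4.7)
The plan is to leverage the matrix parameterization supplied by Theorem \ref{k-fibonacci-matrix}. Once the $K$-Fibonacci entries are packaged as powers of the matrix $U(K)$, the identity becomes a one-line determinant computation, so essentially no induction is required.

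First I would compute $\det U(K)$ directly from the definition: $\det U(K) = K \cdot 0 - 1 \cdot 1 = -1$. Next, applying the multiplicativity of determinants gives $\det U^n(K) = (\det U(K))^n = (-1)^n$. Then I would read off the determinant on the other side of Theorem \ref{k-fibonacci-matrix}, namely
\[
\det U^n(K) = F_{K,n+1}F_{K,n-1} - F_{K,n}^2.
\]
Equating the two expressions for $\det U^n(K)$ yields $F_{K,n+1}F_{K,n-1} - F_{K,n}^2 = (-1)^n$, and multiplying through by $-1$ rearranges to the claimed identity $F_{K,n}^2 - F_{K,n+1}F_{K,n-1} = (-1)^{n+1}$.

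I do not anticipate a real obstacle here, since Theorem \ref{k-fibonacci-matrix} is already in hand and the argument reduces to one determinant calculation. The only minor care is that Theorem \ref{k-fibonacci-matrix} is stated for $n \geq 1$, so the identity as established holds for $n \geq 1$; if one wants it at $n=0$ as well, it can be checked directly using the backward extension $F_{K,-1}=1$ obtained from the recurrence $F_{K,1} = K F_{K,0} + F_{K,-1}$, which gives $F_{K,0}^2 - F_{K,1}F_{K,-1} = -1 = (-1)^{0+1}$. If for some reason one preferred to avoid the matrix machinery altogether, the same identity falls out of a short induction on $n$: the base case $n=1$ gives $1 - K\cdot 0 = 1$, and the inductive step uses $F_{K,n+1} = KF_{K,n}+F_{K,n-1}$ to rewrite $F_{K,n+1}^2 - F_{K,n+2}F_{K,n}$ in terms of the previous instance with a sign flip.
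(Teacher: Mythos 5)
Your argument is correct and is essentially identical to the paper's own proof: both compute $\det U(K)=-1$, use multiplicativity of the determinant to get $\det U^n(K)=(-1)^n$, read off $F_{K,n+1}F_{K,n-1}-F_{K,n}^2$ from Theorem \ref{k-fibonacci-matrix}, and negate. Your added remark about the $n=0$ boundary case is a small bonus the paper does not address.
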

\begin{proof}
    Note that det $U^n(K) = (\text{det } U(K))^n = (-1)^n$. But evaluating the determinant explicitly by Theorem \ref{k-fibonacci-matrix} yields $F_{K,n+1}F_{K,n-1}-F_{K,n}^2$. Negating both expressions gives $F_{K,n}^2-F_{K,n+1}F_{K,n-1}=(-1)^{n+1}$.
\end{proof}

Renault uses a more generalized matrix to study $(a,b)$-Fibonacci sequences, but setting $b=1$ provides the same result for $K$-Fibonacci sequences:
\begin{theorem}\label{period-even}(Renault \cite{Renault2}).
    Let $K$ be an integer and let $m > 2$. Then $2 \mid \pi_K(m)$.
\end{theorem}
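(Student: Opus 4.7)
The plan is to combine the defining boundary conditions of the Pisano period with the $K$-Fibonacci log-Fibonacci identity (Theorem \ref{log-fibonacci}) to pin down the parity of $\pi_K(m)$. Write $\pi = \pi_K(m)$. By the definition of the Pisano period, the sequence resets exactly at index $\pi$, so $F_{K,\pi} \equiv 0 \pmod m$ and $F_{K,\pi+1} \equiv 1 \pmod m$.

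Next I would extract information about $F_{K,\pi-1}$ in two different ways. On the one hand, the $K$-Fibonacci recurrence $F_{K,\pi+1} = K F_{K,\pi} + F_{K,\pi-1}$ immediately gives $F_{K,\pi-1} \equiv 1 \pmod m$. On the other hand, evaluating Theorem \ref{log-fibonacci} at $n = \pi$ yields
\[
F_{K,\pi}^2 - F_{K,\pi+1} F_{K,\pi-1} \equiv (-1)^{\pi+1} \pmod m,
\]
and substituting the known residues of $F_{K,\pi}$ and $F_{K,\pi+1}$ collapses this to $F_{K,\pi-1} \equiv (-1)^{\pi} \pmod m$.

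Combining the two expressions for $F_{K,\pi-1}$ gives $(-1)^{\pi} \equiv 1 \pmod m$. If $\pi$ were odd, this would force $-1 \equiv 1 \pmod m$, i.e. $m \mid 2$, contradicting the hypothesis $m > 2$. Hence $\pi$ must be even, which is exactly the claim. The only real subtlety is noticing that the log-Fibonacci identity, which is stated for $n \geq 1$, is what makes this parity argument go through uniformly in $K$; once that identity is in hand the rest is routine. No heavy lemma appears to be needed beyond Theorem \ref{log-fibonacci} and the definition of $\pi_K(m)$.
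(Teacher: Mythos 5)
Your proof is correct. The paper does not actually prove this statement---it is quoted from Renault \cite{Renault2} without proof---but your argument is a clean, self-contained derivation and is essentially the standard one: combining $F_{K,\pi}\equiv 0$, $F_{K,\pi+1}\equiv 1$ with the Cassini-type identity of Theorem \ref{log-fibonacci} is exactly the statement that $\det U(K)^{\pi}\equiv \det I \pmod m$, i.e.\ $(-1)^{\pi}\equiv 1 \pmod m$, which forces $\pi$ even once $m>2$. The one point worth making explicit is that $\pi_K(m)\geq 2$ for $m>1$ (a period of $1$ would force $0\equiv 1 \pmod m$), so the identity at $n=\pi$ only involves indices $\geq 1$ and your appeal to Theorem \ref{log-fibonacci} is legitimate; you gesture at this but it deserves a sentence.
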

As an immediate consequence, if the rank is odd, then the order must be even.
\begin{corollary}\label{alpha-odd}
    If $\alpha_K(m)$ is odd, then $2 \mid \omega_K(m)$.
\end{corollary}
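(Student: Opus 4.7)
The plan is to observe that this corollary follows in a single line by combining the two immediately preceding results. By Theorem \ref{period-product}, we have the factorization $\pi_K(m) = \alpha_K(m)\,\omega_K(m)$. By Theorem \ref{period-even}, for $m > 2$ the left-hand side is even. Hence the product $\alpha_K(m)\,\omega_K(m)$ is divisible by $2$, and since $2$ is prime it must divide at least one factor.

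Under the hypothesis that $\alpha_K(m)$ is odd, the factor $\alpha_K(m)$ contributes no factor of $2$, so divisibility by $2$ must be carried by $\omega_K(m)$, i.e. $2 \mid \omega_K(m)$. The only mildly subtle point is the implicit assumption $m > 2$ needed to invoke Theorem \ref{period-even}; for $m \in \{1,2\}$ the statement could fail (for instance, with $K$ odd one has $\alpha_K(2)=3$ but $\omega_K(2)=1$), so either the corollary should be read as applying to $m > 2$ or this restriction should be made explicit. There is no real obstacle — the entire argument is a one-line divisibility check.
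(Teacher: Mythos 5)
Your proof is correct and is essentially identical to the paper's: both invoke Theorem \ref{period-product} to write $\pi_K(m)=\alpha_K(m)\omega_K(m)$ and Theorem \ref{period-even} to conclude the product is even, forcing $2\mid\omega_K(m)$ when $\alpha_K(m)$ is odd. Your additional remark about the implicit restriction $m>2$ (with the counterexample $\alpha_K(2)=3$, $\omega_K(2)=1$ for odd $K$) is a valid caveat that the paper's own proof does not address.
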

\begin{proof}
    By Theorem \ref{period-product}, $\pi_K(m)=\alpha_K(m)\omega_K(m)$. By Theorem \ref{period-even}, $\pi_K(m)$ is even. So at least one of $\alpha_K(m)$ or $\omega_K(m)$ must be even. Hence, when the rank is odd, the order must be even.
\end{proof}

To establish the connection between the index of $K$-Fibonacci numbers and the order, analogs of several other results in the Fibonacci sequence must be developed.

\begin{theorem}\label{negative-fibonacci}
    For a $K$-Fibonacci sequence, $F_{K,\pi_K(m)-n} \equiv (-1)^{n+1}F_{K,n} \pmod m$.
\end{theorem}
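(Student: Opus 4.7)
The plan is to proceed by induction on $n$, using the defining recurrence $F_{K,n+1} = KF_{K,n} + F_{K,n-1}$ read in both the forward and the backward direction. Write $P = \pi_K(m)$ for brevity. The two base cases are $n=0$ and $n=1$: the $n=0$ case holds because $F_{K,P}\equiv 0 = (-1)^1 F_{K,0}\pmod m$ by the very definition of the Pisano period, and the $n=1$ case follows because the recurrence $F_{K,P+1} = KF_{K,P} + F_{K,P-1}$ combined with $F_{K,P+1}\equiv 1$ and $F_{K,P}\equiv 0\pmod m$ forces $F_{K,P-1}\equiv 1 = (-1)^2 F_{K,1}\pmod m$.

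For the inductive step, I assume the identity for both $n-1$ and $n$ and apply the recurrence rewritten as $F_{K,j-1} = F_{K,j+1} - KF_{K,j}$ with $j = P-n$. This yields
\[
F_{K,P-(n+1)} \;=\; F_{K,P-(n-1)} - K F_{K,P-n} \;\equiv\; (-1)^{n} F_{K,n-1} - K(-1)^{n+1}F_{K,n} \pmod m.
\]
Factoring out $(-1)^n$ and invoking the forward recurrence $K F_{K,n} + F_{K,n-1} = F_{K,n+1}$, the right-hand side collapses to $(-1)^n F_{K,n+1} = (-1)^{(n+1)+1} F_{K,n+1}\pmod m$, closing the induction.

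Conceptually, the identity is a periodic avatar of the classical negative-index formula $F_{K,-n}=(-1)^{n+1}F_{K,n}$, which one obtains by extending the sequence to negative indices through the same recurrence; since the sequence is periodic of period $P$ modulo $m$, one has $F_{K,P-n}\equiv F_{K,-n}\pmod m$, so proving the negative-index identity suffices, and the inductive computation above accomplishes exactly that. I do not expect a serious obstacle: the only subtle point is that the definition of $\pi_K(m)$ literally only guarantees $F_{K,P}\equiv 0$ and $F_{K,P+1}\equiv 1\pmod m$, and one must extract $F_{K,P-1}\equiv 1$ from the recurrence to get the $n=1$ base case off the ground. An alternative (and essentially equivalent) route would be to invoke Theorem \ref{k-fibonacci-matrix} and compute $U(K)^{-n}$ explicitly, reading off $F_{K,-n}$ from its entries; this streamlines the book-keeping but hides the role of periodicity that the inductive argument makes transparent.
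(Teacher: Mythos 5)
Your proof is correct and follows essentially the same route as the paper's: induction anchored at $n=0,1$, with the inductive step running the recurrence backwards as $F_{K,P-(n+1)} = F_{K,P-(n-1)} - KF_{K,P-n}$ and factoring out $(-1)^n$. Your treatment is in fact slightly more careful at the base case, since you derive $F_{K,P-1}\equiv 1\pmod m$ from the recurrence rather than asserting it.
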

\begin{proof}
    This can be shown inductively. As a base case, consider that $F_{K,\pi_K(m)}\equiv 0 \equiv F_{K,0} \pmod m$ and $F_{K,\pi_K(m)-1} \equiv 1 \equiv F_{K,1} \pmod m$. Inductively, suppose $F_{K,\pi_K(m)-(n-1)} = KF_{K,\pi_K(m)-n}+F_{K,\pi_K(m)-(n+1)}$. Rearranging the equation gives $F_{K,\pi_K(m)-(n+1)} = F_{K,\pi_K(m)-(n-1)} = KF_{K,\pi_K(m)-n}$. By the inductive hypothesis, this simplifies to 
    \begin{align*}
    F_{K,\pi_K(m)-(n+1)} &\equiv (-1)^n F_{K,n-1} - (-1)^{n+1}K F_{K,n} \pmod m\\
    &\equiv (-1)^n F_{K,n-1} + K(-1)^n F_{K,n} \pmod m\\
    &\equiv (-1)^n F_{K,n+1} \pmod m\\
    &\equiv (-1)^{n+2} F_{K,n+1} \pmod m.
    \end{align*}
\end{proof}

\begin{theorem}\label{period-divisibility}
    If $F_{K,n} \equiv 0 \pmod m$, then $\alpha_K(m) \mid n$. If $F_{K,n} \equiv 0 \pmod m$ and if $F_{K,n+1} \equiv 1 \pmod m$, then $\pi_K(m) \mid n$.
\end{theorem}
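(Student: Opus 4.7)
The plan is to derive both halves of Theorem \ref{period-divisibility} directly from the iterated version of Lemma \ref{residue-subtraction}, together with the fact (recorded just after Proposition \ref{residue-order}) that $\gcd(\beta_K(m),m) = 1$. The overall strategy is the standard division-algorithm argument used to prove the corresponding classical Fibonacci facts, but everything should go through in the $K$-Fibonacci setting because $\beta_K(m)$ behaves like a unit modulo $m$.

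For the first statement, I would begin by writing $n = q\,\alpha_K(m) + r$ with $0 \varleq r < \alpha_K(m)$. Applying Lemma \ref{residue-subtraction} a total of $q$ times (by induction on $q$) yields
\[
F_{K,n} \equiv \beta_K(m)^{q}\,F_{K,r} \pmod{m}.
\]
Since $\beta_K(m)$ is a unit modulo $m$, the hypothesis $F_{K,n}\equiv 0\pmod m$ forces $F_{K,r}\equiv 0 \pmod m$. But $r < \alpha_K(m)$ and $\alpha_K(m)$ is by definition the smallest positive index at which the $K$-Fibonacci sequence vanishes modulo $m$, so we must have $r = 0$, i.e.\ $\alpha_K(m) \mid n$.

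For the second statement, I would invoke the first half to write $n = q\,\alpha_K(m)$, and then apply Lemma \ref{residue-subtraction} once more to the shifted index $n+1$:
\[
F_{K,n+1} \equiv \beta_K(m)\,F_{K,n+1-\alpha_K(m)} \equiv \cdots \equiv \beta_K(m)^{q}\,F_{K,1} \equiv \beta_K(m)^{q} \pmod m.
\]
The hypothesis $F_{K,n+1}\equiv 1\pmod m$ therefore gives $\beta_K(m)^{q}\equiv 1\pmod m$, so $\omega_K(m) = \operatorname{ord}_m(\beta_K(m)) \mid q$ by Proposition \ref{residue-order}. Multiplying by $\alpha_K(m)$ and using Theorem \ref{period-product}, we conclude $\pi_K(m) = \alpha_K(m)\omega_K(m) \mid \alpha_K(m)\,q = n$.

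The main potential obstacle is a bookkeeping issue rather than a conceptual one: Lemma \ref{residue-subtraction} is only stated for $n \vargeq \alpha_K(m)$, so the iteration must stop at $r$ (respectively at index $1$), and one needs to handle the degenerate case $n < \alpha_K(m)$ in the first part separately (where the minimality of $\alpha_K(m)$ forces $n = 0$ outright). Beyond that, the proof is a routine unwinding of the residue recursion, and the coprimality $\gcd(\beta_K(m),m)=1$ is what makes the unit-cancellation step legitimate.
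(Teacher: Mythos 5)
Your proof is correct, and it splits naturally into two comparisons. For the first half (divisibility by the rank), your argument is essentially the paper's: both write $n=q\,\alpha_K(m)+r$, iterate Lemma \ref{residue-subtraction} to get $F_{K,n}\equiv\beta_K(m)^q F_{K,r}\pmod m$, cancel the unit $\beta_K(m)$, and invoke minimality of $\alpha_K(m)$ (the paper phrases it as a contradiction, you phrase it directly, which is immaterial). For the second half your route is genuinely different. The paper divides $n$ by $\pi_K(m)$ and uses only periodicity: the remainder $r$ would give consecutive residues $0,1$ at an index $0<r<\pi_K(m)$, contradicting the minimality of the period. You instead use the first half to write $n=q\,\alpha_K(m)$, iterate the residue lemma once more to get $F_{K,n+1}\equiv\beta_K(m)^q\pmod m$, and conclude $\omega_K(m)=\operatorname{ord}_m(\beta_K(m))\mid q$, whence $\pi_K(m)=\alpha_K(m)\omega_K(m)\mid\alpha_K(m)q=n$ by Theorem \ref{period-product}. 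The paper's version is more elementary (it needs nothing beyond the definition of $\pi_K$), while yours makes visible \emph{why} the period divides $n$ --- the exponent $q$ is killed by the order of the residue --- and reuses the same machinery as Proposition \ref{residue-order}; since neither Proposition \ref{residue-order} nor Theorem \ref{period-product} depends on the present theorem, there is no circularity. Your closing remark about the range restriction $n\vargeq\alpha_K(m)$ in Lemma \ref{residue-subtraction} and the degenerate case $q=0$ is a fair bookkeeping point that the paper glosses over as well.
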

\begin{proof}
    Let $F_{K,n} \equiv 0 \pmod m$ and suppose for contradiction that $\alpha_K(m) \not \mid n$. Then $n = q\alpha_K(m)+r$. By repeated application of Lemma \ref{residue-subtraction}, $0 \equiv F_{K,n} \equiv \beta_K(m)^q F_{K,r} \pmod m$. But $\beta_K(m)$ has an order modulo $m$, hence it is necessary that $\gcd(\beta_K(m),m) = 1$. Therefore $F_{K,r} \equiv 0 \pmod m$ and $\alpha_K(m) \varleq r$, giving a contradiction.\\

    Let $F_{K,n} \equiv 0 \pmod m$ and let $F_{K,n+1} \equiv 1 \pmod m$ and suppose for contradiction that $\pi_K(m) \not \mid n$. Then $n = q\pi_K(m)+r$, where $0 < r < n$. Because $F_{K,r} \equiv F_{K,n} \equiv 0 \pmod m$, and because $F_{K,r+1} \equiv F_{K,n+1} \equiv 1 \pmod m$, it follows that $\pi_K(m) \varleq r$, which again gives a contradiction.\\
\end{proof}
A flurry of interest surrounded the discovery that Pisano periods may lead to a proof of Fermat's Last Theorem. Studied by Wall \cite{Wall}, Sun \& Sun \cite{Sun}, and many others, Wall-Sun-Sun primes for the classic Fibonacci sequence are those primes $p$ where $\pi(p)=\pi(p^2)$. Although none have been found, infinitely many are conjectured to exist. A proof of their non-existence would also prove Fermat's Last Theorem. In the case of $K$-Fibonacci sequences, there is much known about $K$-Wall-Sun-Sun primes, where $\pi_K(p)=\pi_K(p^2)$ (see \cite{Bouazzaoui, Bouazzaoui_2, Harrington, Jones2023, Jones2024}.  
\begin{theorem}[Renault \cite{Renault2}]\label{lifting-exponent-period}
    For all $K$ and all $e \vargeq 1$, $\pi_K(p^{e+1})=p\pi_K(p^e)$ or $\pi_K(p^e)$; in the latter case $p$ is a $K$-Wall-Sun-Sun prime.
\end{theorem}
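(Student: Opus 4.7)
The plan is to interpret the Pisano period through the companion matrix $U(K)$ from Theorem \ref{k-fibonacci-matrix}: since powers of $U(K)$ record consecutive pairs $(F_{K,n}, F_{K,n+1})$, the Pisano period $\pi_K(m)$ is exactly the multiplicative order of $U(K)$ in $\mathrm{GL}_2(\ZZ/m\ZZ)$. Reducing any identity $U^n(K) \equiv I \pmod{p^{e+1}}$ modulo $p^e$ gives $\pi_K(p^e) \mid \pi_K(p^{e+1})$ immediately, so only the divisibility $\pi_K(p^{e+1}) \mid p\,\pi_K(p^e)$ needs work.

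For that bound, let $N = \pi_K(p^e)$ and write $U^N(K) = I + p^e A$ for some integer matrix $A$. Expanding by the binomial theorem,
\begin{equation*}
U^{pN}(K) = (I + p^e A)^p = I + p^{e+1}A + \sum_{k=2}^{p} \binom{p}{k}\,p^{ek}\,A^k.
\end{equation*}
Each $k \vargeq 2$ term has $p$-adic valuation at least $ek \vargeq 2e \vargeq e+1$ (with an extra factor of $p$ to spare whenever $p$ is odd and $1 \varleq k \varleq p-1$, since then $p \mid \binom{p}{k}$). Hence $U^{pN}(K) \equiv I \pmod{p^{e+1}}$, yielding $\pi_K(p^{e+1}) \mid pN$. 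Combined with the divisibility in the other direction, the ratio $\pi_K(p^{e+1})/\pi_K(p^e)$ is either $1$ or $p$.

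For the $K$-Wall-Sun-Sun identification I would argue the contrapositive by induction on $e$. Suppose $p$ is \emph{not} a $K$-Wall-Sun-Sun prime, so $\pi_K(p^2) = p\,\pi_K(p)$, and write $U^{\pi_K(p)}(K) = I + pA$. The assumption forces $A \not\equiv 0 \pmod p$, for otherwise $U^{\pi_K(p)}(K) \equiv I \pmod{p^2}$ would give $\pi_K(p^2) = \pi_K(p)$. Assuming inductively $U^{p^{e-1}\pi_K(p)}(K) \equiv I + p^e A \pmod{p^{e+1}}$ with the same $A \not\equiv 0 \pmod p$, the expansion above shows (for odd $p$) that
\begin{equation*}
U^{p^e \pi_K(p)}(K) \equiv I + p^{e+1} A \pmod{p^{e+2}},
\end{equation*}
so the lift continues to fail and $\pi_K(p^{e+1}) = p\,\pi_K(p^e)$. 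Contrapositively, any failure $\pi_K(p^{e+1}) = \pi_K(p^e)$ forces $\pi_K(p^2) = \pi_K(p)$, i.e., $p$ is a $K$-Wall-Sun-Sun prime.

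The main obstacle is the prime $p = 2$: here $\binom{p}{k}$ carries no extra factor of $p$, so the inductive step in the Wall-Sun-Sun direction can suffer unexpected cancellations (e.g.\ $A + A^2 \equiv 0 \pmod 2$). The first dichotomy $\pi_K(2^{e+1})/\pi_K(2^e) \in \{1,2\}$ still follows directly from the expansion for $e \vargeq 1$, but propagating non-triviality of $A$ up the tower of $2$-power moduli requires either sharper $2$-adic bookkeeping or an appeal to auxiliary identities for $F_{K,n}$ and $L_{K,n}$ — this is the one place I expect the proof to need genuine care beyond the clean odd-prime argument.
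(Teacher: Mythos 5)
First, a framing point: the paper does not prove this statement at all --- it is imported verbatim from Renault \cite{Renault2} --- so there is no internal argument to compare yours against. Your route, identifying $\pi_K(m)$ with the order of $U(K)$ in $\mathrm{GL}_2(\ZZ/m\ZZ)$ (legitimate since $\det U(K)=-1$ is always a unit, and $U^n\equiv I$ is equivalent to $F_{K,n}\equiv 0$, $F_{K,n+1}\equiv 1$) and lifting via the binomial expansion of $(I+p^eA)^p$, is the standard proof of such lifting-the-exponent statements. It correctly establishes the dichotomy $\pi_K(p^{e+1})\in\{\pi_K(p^e),\,p\,\pi_K(p^e)\}$ for every prime, and the Wall--Sun--Sun clause for every odd prime.

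The gap you flag at $p=2$ is real --- the theorem is claimed for all primes, and your induction does not run there as written --- but it is narrower than you suggest and closes in a few lines. In $(I+2^eA)^2=I+2^{e+1}A+2^{2e}A^2$, the term $2^{2e}A^2$ survives modulo $2^{e+2}$ only when $e=1$; for $e\vargeq 2$ one has $2e\vargeq e+2$ and your argument runs unchanged. So everything reduces to one claim: if $U^{\pi_K(2)}=I+2A$ with $A\not\equiv0\pmod 2$ (i.e.\ $2$ is not a $K$-Wall--Sun--Sun prime), then $A+A^2\not\equiv0\pmod 2$, so that $\pi_K(8)=2\pi_K(4)$ and $U^{2\pi_K(2)}=I+4(A+A^2)$ re-seeds the induction with a matrix that is a unit multiple mod $2$. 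This can be checked directly in the two parity classes of $K$. For odd $K$, $\pi_K(2)=3$ and $U^3-I=\left(\begin{smallmatrix}K^3+2K-1&K^2+1\\K^2+1&K-1\end{smallmatrix}\right)$; from $\det U^3=-1$ one gets $\operatorname{tr}A$ odd, a direct computation gives $\det A=-K(K^2+3)/4$ odd (since $K^2+3\equiv 4\pmod 8$), and Cayley--Hamilton yields $A+A^2=(1+\operatorname{tr}A)A-(\det A)I\equiv I\pmod 2$. For even $K$, $\pi_K(2)=2$ and $A=\left(\begin{smallmatrix}K^2/2&K/2\\K/2&0\end{smallmatrix}\right)$, which is nonzero mod $2$ only when $K\equiv 2\pmod 4$, in which case $A\equiv\left(\begin{smallmatrix}0&1\\1&0\end{smallmatrix}\right)$ and $A+A^2\equiv\left(\begin{smallmatrix}1&1\\1&1\end{smallmatrix}\right)\not\equiv0\pmod 2$. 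With this supplement your proof is complete for all primes; as it stands, the $p=2$ Wall--Sun--Sun clause is asserted but not proved.
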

\begin{corollary}\label{period-power}
    For all $e \vargeq 1$, $\pi_K(p^e) = p^b \pi_K(p)$ for some $0 \varleq b \varleq e-1$.
\end{corollary}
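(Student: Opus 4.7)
The plan is to prove this corollary by straightforward induction on $e$, using Theorem \ref{lifting-exponent-period} as the key engine. The statement essentially says that each time we lift from $p^e$ to $p^{e+1}$, the Pisano period either stays the same or gets multiplied by $p$, so after $e-1$ lifts starting from $\pi_K(p)$ we can only have picked up at most $e-1$ factors of $p$.

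For the base case $e=1$, the identity $\pi_K(p^1) = p^0 \pi_K(p)$ holds trivially with $b=0$, which satisfies $0 \leq b \leq e-1 = 0$.

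For the inductive step, I would assume $\pi_K(p^e) = p^b \pi_K(p)$ for some $0 \leq b \leq e-1$. Applying Theorem \ref{lifting-exponent-period} gives that $\pi_K(p^{e+1})$ equals either $p \cdot \pi_K(p^e) = p^{b+1} \pi_K(p)$ or $\pi_K(p^e) = p^b \pi_K(p)$. In the first case the exponent is $b+1$, which satisfies $1 \leq b+1 \leq e$; in the second case the exponent is $b$, which satisfies $0 \leq b \leq e-1 \leq e$. Either way, $\pi_K(p^{e+1}) = p^{b'} \pi_K(p)$ for some $0 \leq b' \leq (e+1)-1$, completing the induction.

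There is no real obstacle here: the corollary is essentially a repackaging of Theorem \ref{lifting-exponent-period}, recording that the only way the period can grow under the lifting $p^e \mapsto p^{e+1}$ is by a factor of $p$, and bounding how many times this can occur after $e-1$ steps. The only subtle point worth noting is that the bound $b \leq e-1$ (rather than $b \leq e$) is tight precisely when no lift is trivial, i.e., when $p$ is not a $K$-Wall-Sun-Sun prime at any intermediate stage.
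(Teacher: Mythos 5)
Your proof is correct and follows essentially the same route as the paper's: induction on $e$ with the trivial base case $e=1$, using Theorem \ref{lifting-exponent-period} to conclude that each lift multiplies the period by $p$ or leaves it unchanged. (Incidentally, you cite the correct theorem; the paper's own proof mistakenly references Theorem \ref{lifting-exponent-rank}, the rank version, in this step.)
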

\begin{proof}
    Since the claim is trivial for $e=1$, it is possible to proceed by an inductive argument. By Theorem \ref{lifting-exponent-rank}, $\pi_K(p^{e+1})=\pi_K(p^e)=p^b\pi_K(p) \varleq p^e\pi_K(p)$ or $p\pi_K(p^e)=p^{b+1}\pi_K(p) \varleq p^e\pi_K(p)$. Thus the result holds for $e+1$, completing the proof.
\end{proof}

\begin{theorem}[Renault \cite{Renault2}]\label{lifting-exponent-rank}
    For all $K$ and $e \vargeq 1$, $\alpha_K(p^{e+1})=p\alpha_K(p^e)$ or $\alpha_K(p^e)$.
\end{theorem}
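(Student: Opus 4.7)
The plan is to show that the lift from rank $\alpha_K(p^e)$ to $\alpha_K(p^{e+1})$ is controlled entirely by the $p$-adic valuation of $F_{K,\alpha_K(p^e)}$. Set $\alpha = \alpha_K(p^e)$ and write $F_{K,\alpha} = p^e M$. By the first part of Theorem \ref{period-divisibility} applied mod $p^e$, $\alpha \mid \alpha_K(p^{e+1})$, so $\alpha_K(p^{e+1}) = c\alpha$ for some positive integer $c$, and I will show $c \in \{1,p\}$. The easy case is $p \mid M$: then $p^{e+1} \mid F_{K,\alpha}$, forcing $c=1$. The substance of the proof is the case $p \nmid M$, where I will show $c=p$.

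To handle this case, I raise the matrix $U(K)$ of Theorem \ref{k-fibonacci-matrix} to the power $k\alpha$ via $(U^\alpha)^k$ modulo $p^{e+1}$. With $u_1 = F_{K,\alpha+1}$ and $u_2 = F_{K,\alpha-1}$,
\[
U^\alpha \equiv \begin{pmatrix} u_1 & p^eM \\ p^eM & u_2 \end{pmatrix} \pmod{p^{e+1}}.
\]
Since $p^{2e} \equiv 0 \pmod{p^{e+1}}$ for $e\ge 1$, a straightforward induction on $k$ establishes
\[
(U^\alpha)^k \equiv \begin{pmatrix} u_1^k & p^eM\, S_k \\ p^eM\, S_k & u_2^k \end{pmatrix} \pmod{p^{e+1}}, \qquad S_k = \sum_{i=0}^{k-1} u_1^{k-1-i} u_2^i.
\]
Reading off the $(1,2)$ entry gives $F_{K,k\alpha} \equiv p^eM\,S_k \pmod{p^{e+1}}$, so the smallest $k\ge 1$ with $p^{e+1}\mid F_{K,k\alpha}$ is the smallest $k$ with $p\mid S_k$ (since $p\nmid M$).

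The finishing observation comes from the $K$-Fibonacci recurrence itself: $u_1 - u_2 = K F_{K,\alpha} = K p^e M$, so $u_1 \equiv u_2 \pmod{p}$. Therefore $S_k \equiv k\,u_1^{k-1} \pmod{p}$, and because $u_1 \equiv \beta_K(p^e) \pmod{p^e}$ is a unit modulo $p^e$ (by Proposition \ref{residue-order}), $u_1^{k-1}$ is invertible modulo $p$; hence $p\mid S_k$ iff $p\mid k$, and the minimum valid $k$ is $p$. The main obstacle is the matrix induction: one must carefully verify that all $p^{2e}$ cross-terms vanish modulo $p^{e+1}$ and recognize the telescoping that produces the geometric sum $S_k$. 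The congruence $u_1 \equiv u_2 \pmod{p}$, coming straight from the recurrence, is what collapses the potentially large multiplicative order of $u_1/u_2$ into the clean dichotomy $c \in \{1,p\}$.
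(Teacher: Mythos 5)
Your proof is correct, and it is worth noting that the paper does not actually prove this statement: it is imported verbatim from Renault \cite{Renault2}, so there is no internal proof to compare against. Your argument supplies a self-contained derivation in essentially the same spirit as the standard one in the cited source, namely a lifting-the-exponent computation on powers of the generating matrix. The key steps all check out: the reduction to $\alpha_K(p^{e+1})=c\,\alpha_K(p^e)$ via Theorem \ref{period-divisibility} is valid; the induction giving $(U^\alpha)^k\equiv\left(\begin{smallmatrix}u_1^k & p^eMS_k\\ p^eMS_k & u_2^k\end{smallmatrix}\right)\pmod{p^{e+1}}$ works because the cross-terms carry $p^{2e}$ and $2e\ge e+1$, and the off-diagonal entries telescope exactly to the geometric sum $S_{k+1}$; the congruence $u_1\equiv u_2\pmod p$ follows from $F_{K,\alpha+1}-F_{K,\alpha-1}=KF_{K,\alpha}$; and $p\nmid u_1$ holds since $u_1\equiv\beta_K(p^e)$ is a unit (alternatively, $\det U^\alpha=(-1)^\alpha$ forces $u_1u_2\equiv\pm1\pmod p$). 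Hence $p\mid S_k$ iff $p\mid k$ and $c\in\{1,p\}$ as claimed, with no hypothesis that $p$ is odd, so the argument covers $p=2$ as well. One cosmetic point: in the case $p\mid M$ you should say explicitly that $p^{e+1}\mid F_{K,\alpha}$ gives $\alpha_K(p^{e+1})\mid\alpha$ by Theorem \ref{period-divisibility}, which together with $\alpha\mid\alpha_K(p^{e+1})$ yields $c=1$; ``forcing $c=1$'' elides this two-sided divisibility. Your proof also shows slightly more than the statement: it identifies exactly when each alternative occurs (namely $c=1$ iff $p^{e+1}\mid F_{K,\alpha_K(p^e)}$), which is the content of the Wall--Sun--Sun dichotomy mentioned elsewhere in the paper.
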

\begin{corollary}\label{rank-power}
    For all $e \vargeq 1$, $\alpha_K(p^e) = p^b \alpha_K(p)$ for some $0 \varleq b \varleq e-1$.
\end{corollary}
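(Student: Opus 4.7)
The plan is to prove Corollary \ref{rank-power} by induction on $e$, directly mirroring the proof of Corollary \ref{period-power} but substituting Theorem \ref{lifting-exponent-rank} in place of Theorem \ref{lifting-exponent-period}. The base case $e = 1$ is immediate: taking $b = 0$ gives $\alpha_K(p) = p^0 \alpha_K(p)$, which satisfies the bound $0 \varleq b \varleq 0$.

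For the inductive step, I would assume $\alpha_K(p^e) = p^b \alpha_K(p)$ for some $0 \varleq b \varleq e-1$. Theorem \ref{lifting-exponent-rank} supplies exactly two cases for $\alpha_K(p^{e+1})$: either it equals $p \cdot \alpha_K(p^e) = p^{b+1}\alpha_K(p)$, making the new exponent $b+1$ which is at most $e$; or it equals $\alpha_K(p^e) = p^b \alpha_K(p)$, leaving the exponent at $b \varleq e - 1 \varleq e$. In either case the new exponent lies in $\{0, 1, \ldots, e\} = \{0, 1, \ldots, (e+1) - 1\}$, which is precisely the range required at level $e+1$. This closes the induction.

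I do not anticipate any real obstacle here: the corollary is a routine bookkeeping consequence of iterating Theorem \ref{lifting-exponent-rank}. The only subtlety worth noting is that the range $0 \varleq b \varleq e-1$ is sharp in both directions, which is what allows the induction to close cleanly. The upper bound $b = e-1$ is attained when the ``multiply by $p$'' branch of Theorem \ref{lifting-exponent-rank} is chosen at every stage from $e = 1$ onward, while the lower bound $b = 0$ is attained in the presence of a $K$-Wall-Sun-Sun-type situation in which $\alpha_K(p^e) = \alpha_K(p)$ throughout. Because the bound matches the two possible outcomes of the lifting step exactly, no additional input is needed beyond Theorem \ref{lifting-exponent-rank} itself.
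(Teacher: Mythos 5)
Your proof is correct and follows essentially the same route as the paper: induction on $e$ with the two branches of Theorem \ref{lifting-exponent-rank} handled separately, each keeping the exponent within the required range. No gaps.
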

\begin{proof}
    This can be seen by induction; a base case of $e=1$ is trivial. By Theorem \ref{lifting-exponent-rank}, $\alpha_K(p^{e+1})=\alpha_K(p^e)=p^b\alpha_K(p) \varleq p^e\alpha_K(p)$ or $p\alpha_K(p^e)=p^{b+1}\alpha_K(p) \varleq p^e\alpha_K(p)$. This shows the result holds for $e+1$, completing the proof.
\end{proof}

\begin{theorem}\label{period-lcm}
    For any $m, n,$ and $K$, $\alpha_K(\lcm[m,n])=\lcm[\alpha_K(m),\alpha_K(n)]$ and $\pi_K(\lcm[m,n])=\lcm[\pi_K(m),\pi_K(n)]$.
\end{theorem}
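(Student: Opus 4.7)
The plan is to deduce both identities directly from Theorem \ref{period-divisibility}, which characterizes $\alpha_K(m)$ and $\pi_K(m)$ as the minimal positive indices detecting the one-term (resp.\ two-term) congruence conditions that mark the start of a new Pisano cycle modulo $m$. The common strategy for both identities is: combine the congruence conditions modulo $m$ and modulo $n$ via the Chinese Remainder Theorem to obtain the analogous condition modulo $\lcm[m,n]$, then translate minimality of the index into an lcm of ranks or periods.

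For the rank identity, I would note that by definition $\alpha_K(\lcm[m,n])$ is the smallest positive integer $k$ with $\lcm[m,n]\mid F_{K,k}$, which by CRT is equivalent to the simultaneous conditions $m\mid F_{K,k}$ and $n\mid F_{K,k}$. Applying Theorem \ref{period-divisibility} (and its converse, which follows by iterating Lemma \ref{residue-subtraction} to get $F_{K,r\alpha_K(m)}\equiv \beta_K(m)^rF_{K,0}=0 \pmod m$), these two conditions are equivalent to $\alpha_K(m)\mid k$ and $\alpha_K(n)\mid k$. The least such $k$ is $\lcm[\alpha_K(m),\alpha_K(n)]$, yielding the first equality. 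For the period identity, the same argument applies to the paired congruences $F_{K,k}\equiv 0$ and $F_{K,k+1}\equiv 1$ that define the start of a new Pisano period: by CRT the smallest $k$ satisfying both modulo $\lcm[m,n]$ is the smallest $k$ satisfying them modulo $m$ and modulo $n$ simultaneously, which by Theorem \ref{period-divisibility} is the smallest $k$ with $\pi_K(m)\mid k$ and $\pi_K(n)\mid k$, namely $\lcm[\pi_K(m),\pi_K(n)]$. The converse direction here is immediate from the very definition of period: if $\pi_K(m)\mid k$, then the periodicity of the sequence modulo $m$ forces $F_{K,k}\equiv F_{K,0}=0$ and $F_{K,k+1}\equiv F_{K,1}=1 \pmod m$.

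There is no serious obstacle; the result is essentially a packaging of Theorem \ref{period-divisibility} with CRT and the minimality characterizations of $\alpha_K$ and $\pi_K$. The only point requiring a moment of care is ensuring that both directions of the divisibility criterion in Theorem \ref{period-divisibility} are available, so that the chain of equivalences (rather than mere implications) goes through, and thus the minimum index indeed matches the lcm on the right-hand side.
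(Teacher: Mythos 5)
Your argument is correct, but it is worth noting that the paper does not actually prove Theorem \ref{period-lcm} at all: it simply cites Renault's Theorem 1, which establishes both identities for general $(a,b)$-Fibonacci sequences, and specializes to $b=1$. Your proposal instead gives a self-contained derivation from results already in the paper, namely Lemma \ref{residue-subtraction} (whose iteration supplies the converse $\alpha_K(m)\mid k \Rightarrow m \mid F_{K,k}$, since $F_{K,r\alpha_K(m)}\equiv \beta_K(m)^r F_{K,0}\equiv 0$) and Theorem \ref{period-divisibility} (which supplies the forward implications). Two small points of presentation: first, what you need is not the Chinese Remainder Theorem (which would require $\gcd(m,n)=1$) but merely the universal property of the least common multiple, $\lcm[m,n]\mid x$ if and only if $m\mid x$ and $n\mid x$, which holds for all $m,n$; second, for the period identity you implicitly use that $\pi_K(M)$ is the least positive $k$ with $(F_{K,k},F_{K,k+1})\equiv(0,1)\pmod M$, which is legitimate because the recurrence with $b=1$ is reversible and is the same characterization the paper uses in Proposition \ref{residue-order}, with Theorem \ref{period-divisibility} guaranteeing that the least such $k$ is exactly $\pi_K(M)$. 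With those two clarifications your chain of equivalences identifies the set of admissible indices as the set of multiples of the appropriate lcm, and taking the minimal positive element completes both identities; this is essentially the standard proof and what Renault's cited argument amounts to, so your version buys the paper self-containedness at no real cost.
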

Renault [\!\!\cite{Renault2}, Theorem 1] proves these identities for the more general $(a,b)$-Fibonacci sequence, where $K$-Fibonacci sequences result from setting $b=1$.

\begin{corollary}[Renault, \cite{Renault2}]\label{divisibility}
    If $m \mid n$, then $\alpha_K(m) \mid \alpha_K(n)$ and $\pi_K(m) \mid \pi_K(n)$.
\end{corollary}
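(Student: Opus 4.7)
The plan is to deduce this corollary directly from the lcm identities established in Theorem \ref{period-lcm}. If $m \mid n$, then $\lcm[m,n] = n$, so substituting into $\alpha_K(\lcm[m,n]) = \lcm[\alpha_K(m),\alpha_K(n)]$ yields $\alpha_K(n) = \lcm[\alpha_K(m),\alpha_K(n)]$. Since an integer equals the lcm of itself and another integer precisely when that other integer divides it, this forces $\alpha_K(m) \mid \alpha_K(n)$. The argument for the Pisano period is word-for-word the same, invoking the companion identity $\pi_K(\lcm[m,n]) = \lcm[\pi_K(m),\pi_K(n)]$ to conclude $\pi_K(m) \mid \pi_K(n)$.

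As a sanity check, one can also derive the result from first principles using Theorem \ref{period-divisibility}. By definition $F_{K,\alpha_K(n)} \equiv 0 \pmod{n}$, and since $m \mid n$ this congruence descends to $F_{K,\alpha_K(n)} \equiv 0 \pmod{m}$; Theorem \ref{period-divisibility} then supplies $\alpha_K(m) \mid \alpha_K(n)$. For the period, one additionally has $F_{K,\pi_K(n)+1} \equiv 1 \pmod{n}$, which descends modulo $m$, and the second clause of Theorem \ref{period-divisibility} gives $\pi_K(m) \mid \pi_K(n)$.

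There is no genuine obstacle here; the statement is essentially a two-line observation once Theorem \ref{period-lcm} is in hand, which justifies its status as a corollary. I will present the lcm-based derivation since it is the shorter of the two and mirrors the standard argument from the classical Fibonacci setting.
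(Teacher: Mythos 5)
Your lcm-based argument is correct and is precisely the derivation the paper intends: the statement is placed as an immediate corollary of Theorem \ref{period-lcm}, and setting $\lcm[m,n]=n$ when $m\mid n$ is the whole content (the paper itself supplies no proof, only the citation to Renault). Your alternative derivation via Theorem \ref{period-divisibility} is also sound, so nothing further is needed.
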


\begin{theorem}\label{omega-prime-powers}
    For any integer $K$, any positive integer $e$ and odd prime $p$, $\omega_K(p^e) = \omega_K(p)$.
\end{theorem}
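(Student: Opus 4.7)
The plan is to assemble the result from three earlier facts, with the only real work being a small divisibility argument at the end. The key observation is that for $K$-Fibonacci sequences, $b=1$, so applying Theorem \ref{(a,b)-order} together with the fact that $\mathrm{ord}_m(-1)=2$ for $m \geq 3$ forces $\omega_K(m) \mid 4$. In particular, both $\omega_K(p)$ and $\omega_K(p^e)$ lie in $\{1,2,4\}$ whenever $p$ is an odd prime.

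Next, I would invoke Corollaries \ref{period-power} and \ref{rank-power} to write $\pi_K(p^e) = p^b \pi_K(p)$ and $\alpha_K(p^e) = p^c \alpha_K(p)$ for some $b, c \in \{0, 1, \ldots, e-1\}$. Combining these with Theorem \ref{period-product} applied at both $p$ and $p^e$ immediately gives
\[
\omega_K(p^e) \;=\; \frac{\pi_K(p^e)}{\alpha_K(p^e)} \;=\; p^{\,b-c}\, \omega_K(p).
\]

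The final and only substantive step is to force $b=c$ using parity and size. Since $p$ is an odd prime, $p \geq 3$, so any nontrivial power $p^{|b-c|}$ is an odd integer at least $3$. If $b > c$, then $\omega_K(p^e) = p^{b-c}\omega_K(p)$ would have to lie in $\{1,2,4\}$ while being a product of an odd integer $\geq 3$ and an element of $\{1,2,4\}$; a direct case check ($\omega_K(p)=1,2,4$) shows no such product can equal $1$, $2$, or $4$. If $b < c$, then $p^{c-b}$ would have to divide $\omega_K(p) \in \{1,2,4\}$, which is impossible for any odd $p \geq 3$. Hence $b=c$, and $\omega_K(p^e) = \omega_K(p)$.

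I do not anticipate a genuine obstacle here, since the lifting corollaries already do the heavy lifting and the conclusion reduces to an elementary parity argument. The one subtlety worth noting is that the whole approach depends on $\omega_K$ being constrained to $\{1,2,4\}$, so the argument uses in an essential way that we are in the $K$-Fibonacci setting (i.e.\ $b=1$) and that $p$ is odd, which rules out the exceptional behavior of $p=2$ seen already in Theorem \ref{omega(2)^x}.
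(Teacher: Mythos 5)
Your proposal is correct and follows essentially the same route as the paper: both use Corollaries \ref{period-power} and \ref{rank-power} to write $\omega_K(p^e) = p^{b-c}\omega_K(p)$ and then conclude $b=c$ from the fact that the order is confined to $\{1,2,4\}$ while $p$ is odd. The only difference is that you spell out the case check and the reliance on Theorem \ref{(a,b)-order} a bit more explicitly than the paper does.
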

\begin{proof}
    Renault [\!\!\cite{Renault}, Theorem 3.32] gives a short proof of the fact that $\omega(p^e)=\omega(p)$ for odd primes $p$ in the $1-$Fibonacci sequence, which can now be applied to $K$-Fibonacci sequences. Note that $\omega_K(n)=\frac{\pi_K(n)}{\alpha_K(n)}$ for all $n$. Applying Corollaries \ref{period-power} and \ref{rank-power} gives
    \[\omega_K(p^e) = \frac{\pi_K(p^e)}{\alpha_K(p^e)} = \frac{p^{b_1}\pi_K(p)}{p^{b_2}\alpha_K(p)} = p^{b_1-b_2}\omega_K(p).\] Because the $K$-Fibonacci order can only take on the values $1, 2,$ or $4$, no factors of $p$ may be present in either $\omega_K(p)$ or $\omega_K(p^e)$. Thus $b_1=b_2$ and $\omega_K(p^e)=\omega_K(p)$.
\end{proof}

The following theorem is a direct generalization of the technique and result of Theorem \ref{omega_renault}:

\begin{theorem}\label{omega-alpha}
    Let $K$ be an integer and let $m$ be odd. Then
    \begin{itemize}
        \item For $m>3$, $\omega_K(m)=4$ if and only if $\alpha_K(m)\equiv\pm1\pmod{4}$.\\
        \item $\omega_K(m)=2$ if and only if $4\mid\pi_K(m)$ and $2\mid\alpha_K(m)$.\\
        \item $\omega_K(m)=1$ if and only if $4\not\mid\pi_K(m)$.\\
    \end{itemize}
\end{theorem}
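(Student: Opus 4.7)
The plan is to follow Renault's classical-case strategy, building everything on the single identity $\beta_K(m)^2 \equiv (-1)^{\alpha_K(m)} \pmod m$ together with Proposition \ref{residue-order}, which tells us that $\omega_K(m) = \text{ord}_m(\beta_K(m))$. First I would derive this identity by evaluating the log-Fibonacci identity (Theorem \ref{log-fibonacci}) at $n = \alpha_K(m)$. Since $F_{K,\alpha_K(m)} \equiv 0 \pmod m$, the recurrence gives $F_{K,\alpha_K(m)-1} \equiv F_{K,\alpha_K(m)+1} \equiv \beta_K(m) \pmod m$, so Theorem \ref{log-fibonacci} collapses to $-\beta_K(m)^2 \equiv (-1)^{\alpha_K(m)+1} \pmod m$, which rearranges to the claimed identity.

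With this identity in hand, case (i) is almost immediate. If $\alpha_K(m)$ is odd, then $\beta_K(m)^2 \equiv -1 \pmod m$; since $m>3$ is odd we have $1 \not\equiv -1 \pmod m$, so the order of $\beta_K(m)$ divides $4$ but not $2$, forcing $\omega_K(m)=4$. Conversely, if $\omega_K(m)=4$ then $\beta_K(m)^2 \not\equiv 1 \pmod m$, so $(-1)^{\alpha_K(m)} \not\equiv 1 \pmod m$, whence $\alpha_K(m)$ is odd, i.e.\ $\alpha_K(m) \equiv \pm 1 \pmod 4$.

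The main obstacle is the forward direction of case (iii): showing that $\omega_K(m)=1$ implies $4 \nmid \pi_K(m)$. When $\omega_K(m)=1$ we have $\pi_K(m)=\alpha_K(m)$, so the task reduces to ruling out $\alpha_K(m)=4t$. Here I would apply the reflection identity of Theorem \ref{negative-fibonacci} with $n=2t$: this yields $F_{K,2t} \equiv (-1)^{2t+1} F_{K,2t} \equiv -F_{K,2t} \pmod m$, hence $2F_{K,2t} \equiv 0 \pmod m$. Because $m$ is odd this forces $F_{K,2t} \equiv 0 \pmod m$, contradicting the minimality of $\alpha_K(m)$. The oddness hypothesis on $m$ is exactly what makes this cancellation legal, and it is the heart of the argument.

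Everything else falls out by bookkeeping. For case (ii), if $\omega_K(m)=2$ then $\beta_K(m)^2 \equiv 1$ with $\beta_K(m) \not\equiv 1 \pmod m$, so $(-1)^{\alpha_K(m)} \equiv 1$, giving $2 \mid \alpha_K(m)$ and thus $4 \mid \pi_K(m) = 2\alpha_K(m)$. Conversely, given $4 \mid \pi_K(m)$ and $2 \mid \alpha_K(m)$, the already completed cases rule out $\omega_K(m) \in \{1,4\}$, so $\omega_K(m)=2$. The converse direction of (iii) is the same kind of elimination, using $\pi_K(m)=\alpha_K(m)\omega_K(m)$ and the trichotomy $\omega_K(m) \in \{1,2,4\}$: any $\omega_K(m)\in\{2,4\}$ produces $4\mid\pi_K(m)$, so $4\nmid\pi_K(m)$ leaves only $\omega_K(m)=1$.
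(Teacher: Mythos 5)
Your proof is correct, and while the skeleton (the identity $\beta_K(m)^2\equiv(-1)^{\alpha_K(m)}\pmod m$ from Theorem \ref{log-fibonacci}, combined with Proposition \ref{residue-order}, handling the first bullet directly and the second by elimination) matches the paper, your treatment of the hard direction of the third bullet is genuinely different and cleaner. The paper first proves $4\mid\pi_K(p)\Rightarrow\omega_K(p)\neq1$ for an odd \emph{prime} $p$ by reflecting at $n=\tfrac{\pi_K(p)}{2}+1$, which produces the congruence $KF_{K,\pi_K(p)/2}\equiv0\pmod p$ and forces a separate disposal of the degenerate case $p\mid K$ (where the sequence collapses to $0,1,0,1,\ldots$); it then lifts to prime powers via Corollary \ref{period-power} and to general odd $m$ by induction on the number of distinct prime factors using the lcm identities of Theorem \ref{period-lcm}. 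You instead reflect at the exact midpoint $n=\tfrac{\pi_K(m)}{2}$, so Theorem \ref{negative-fibonacci} gives $2F_{K,\pi_K(m)/2}\equiv0\pmod m$ with no stray factor of $K$; oddness of $m$ then yields a zero at index $\pi_K(m)/2<\alpha_K(m)$, contradicting minimality of the rank. This works for arbitrary odd $m$ in one stroke, avoids the $p\mid K$ case split, and dispenses with the prime-power and lcm globalization machinery; the paper's longer route buys nothing here beyond rehearsing tools it reuses later (e.g., in Theorem \ref{omega-odd-K}). One small point worth making explicit in your write-up: the eliminations in bullets (i) and (ii) use $-1\not\equiv1\pmod m$ and, for the converse of (ii), the fact that an odd $m\leq3$ admits no element of multiplicative order $4$, so the $m>3$ hypothesis of the first bullet causes no gap.
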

\begin{proof}
    From Lemma \ref{log-fibonacci}, $F^2_{\alpha_K(m)}-F_{\alpha_K(m)+1}F_{\alpha_K(m)-1}=(-1)^{\alpha_K(m)+1}$. This expression can be simplified since $F_{\alpha_K(m)} \equiv 0$ and $F_{\alpha_K(m)+1} \equiv \beta_K(m) \pmod m$ by definition. Furthermore, $F_{\alpha_K(m)+1}=KF_{\alpha_K(m)}+F_{\alpha_K(m)-1} \equiv F_{\alpha_K(m)-1} \pmod m$. Thus, $-\beta_K(m)^2 \equiv (-1)^{\alpha_K(m)+1}$, and
    \begin{align}
    \beta_K(m)^2 \equiv (-1)^{\alpha_K(m)}.
    \end{align}
    Suppose $\alpha_K(m) \equiv \pm 1 \pmod 4$, so $\beta_K(m)^2 \equiv -1 \pmod m$ by equation (1). It follows from Theorem \ref{residue-order} that $2 \not \mid \omega_K(m)$. The only remaining option for $\omega_K(m)$ is 4. For the converse, suppose $\omega_K(m)=4$. Then $\beta_K(m)^2 \equiv -1 \pmod m$, since $\text{ord}_m(\beta_K(m))=\omega_K(m)=4$ by Theorem \ref{residue-order}. Because $\beta_K(m)^2 \equiv (-1)^{\alpha_K(m)} \pmod m$, it follows that $\alpha_K(m)$ is odd. This proves the first part of Theorem \ref{omega-alpha}.
    
    Now suppose that $4 \mid \pi_K(p)$ for an odd prime $p$; then it will be shown that $\omega_K(p) \neq 1$. Let $n = \frac{\pi_K(p)}{2}+1$, which is odd since $4 \mid \pi_K(p)$. Then, $F_{K,n} \equiv (-1)^{n+1} F_{K,\pi_K(p)-n} \equiv F_{K,\pi_K(p)-n} \pmod p$. Because $\pi_K(p)-n = n-2$, the formula $F_{K,n}=KF_{K,n-1}+F_{K,n-2}$ gives $F_{K,n}\equiv KF_{K,n-1}+F_{K,n-2} \pmod p$, so $KF_{K,n-1} \equiv 0 \pmod p$. If $K \equiv 0 \pmod p$, then the sequence will reduce to $0, 1, 0, 1, 0, 1, \ldots$, modulo $p$. Hence, $\pi_K(p) = 2$, contradicting the assumption. It follows that $F_{K,n-1} \equiv 0 \pmod p$. Along with the zero at the beginning of each Pisano period, this shows that $\omega_{K}(p) \vargeq 2$ and hence cannot equal $1$. Therefore, if $\omega_K(p) = 1$, then $4 \not \mid \pi_K(p)$.
    
    Since $\pi_K(p^e)=p^b\pi_K(p)$ by Corollary \ref{period-power} and $p$ is an odd prime, $4 \mid \pi_K(p^e)$ if and only if $4 \mid \pi_K(p)$. If $4 \not \mid \pi_K(p^e)$, then $4 \not \mid \pi_K(p)$, so $\omega_K(p)=\omega_K(p^e)=1$. The proof for a general odd $m$ follows by inducting on the number of distinct prime factors, with the proof for prime powers providing a base case. Suppose $m = p_1^{e_1}p_2^{e_2}\cdots p_r^{e_r}$ and $4 \not \mid \pi_K(m)$. If either $4 \mid \pi_K(p_1^{e_1}p_2^{e_2}\cdots p_{r-1}e^{r-1})$ or $4 \mid \pi_K(p_r^{e_r})$, then by Theorem \ref{period-lcm} $4 \mid m$, contradicting the assumption.
    
    The inductive hypothesis can now be applied to yield $\omega_K(p_1^{e_1}p_2^{e_2}\cdots p_{r-1}e^{r-1})=1$. Therefore, $\pi_K(p_1^{e_1}p_2^{e_2}\cdots p_{r-1}e^{r-1}) = \alpha_K(p_1^{e_1}p_2^{e_2}\cdots p_{r-1}e^{r-1})$. Since $4 \not \mid \pi_K(p_r^{e_r})$, $\omega_K(p_r^{e_r})=1$ and $\pi_K(p_r^{e_r})=\alpha_K(p_r^{e_r})$. Theorem \ref{period-lcm} can be applied once more to give \begin{align*}
    \pi_K(m) = \lcm[\pi_K(p_1^{e_1}p_2^{e_2}\cdots p_{r-1}e^{r-1}),\pi_K(p_r^{e_r})] = \lcm[\alpha_K(p_1^{e_1}p_2^{e_2}\cdots p_{r-1}e^{r-1}),\alpha_K(p_r^{e_r})] = \alpha_K(m),
     \end{align*} which proves that $\omega_K(m)=1$ for any odd $m$.
    The converse of the above statement can be proven by assuming that $\omega_K(m)=2$ or $4$. Since $\alpha_K(m)$ is odd if and only if $\omega_K(m)=1$, then $\alpha_K(m)$ is even; thus $4 \mid \alpha_K(m)\omega_K(m) = \pi_K(m)$. If $4 \not \mid \pi_K(m)$, then $\omega_K(m)=1$, proving the third part of the theorem. The second part follows immediately from the first and the third.
\end{proof}

\begin{theorem}\label{K-wyler}
    Let $K$ be an integer and let $p$ be an odd prime; then,
    \begin{itemize}
        \item $\omega_K(p)=4$ if and only if $\alpha_K(p)\equiv\pm1\pmod{4}$.\\
        \item $\omega_K(p)=2$ if and only if $\alpha_K(p)\equiv0\pmod{4}$.\\
        \item $\omega_K(p)=1$ if and only if $\alpha_K(p)\equiv2\pmod{4}$.
    \end{itemize}
\end{theorem}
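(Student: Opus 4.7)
The plan is to derive Theorem~\ref{K-wyler} from Theorem~\ref{omega-alpha}, Corollary~\ref{alpha-odd}, and the $K$-Lucas identities, with a single nontrivial computation needed only for the converse of the third bullet. The first bullet is immediate from the first bullet of Theorem~\ref{omega-alpha} whenever $p>3$. For the edge case $p=3$, both sides of the biconditional are vacuously false: since $-1$ is not a square modulo $3$, equation~(1) in the proof of Theorem~\ref{omega-alpha} (namely $\beta_K(p)^2 \equiv (-1)^{\alpha_K(p)} \pmod p$) forces $\alpha_K(3)$ to be even, so $\alpha_K(3)\not\equiv \pm 1\pmod 4$, and simultaneously $\omega_K(3)\neq 4$.

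For the forward direction of the third bullet, suppose $\omega_K(p)=1$. Then Theorem~\ref{omega-alpha} gives $4 \nmid \pi_K(p) = \alpha_K(p)\omega_K(p) = \alpha_K(p)$, while Corollary~\ref{alpha-odd} rules out $\alpha_K(p)$ being odd (else $2 \mid \omega_K(p)$). Hence $\alpha_K(p) \equiv 2 \pmod 4$.

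The converse of the third bullet is the heart of the proof. Writing $\alpha_K(p) = 2k$ with $k$ odd, I plan to show $\beta_K(p) \equiv 1 \pmod p$, which by Proposition~\ref{residue-order} forces $\omega_K(p) = \text{ord}_p(1) = 1$. From Lemma~\ref{lucas}, $F_{K,2k} = F_{K,k} L_{K,k}$, and since $k < \alpha_K(p)$ we have $p \nmid F_{K,k}$, forcing $p \mid L_{K,k} = F_{K,k+1} + F_{K,k-1}$ and hence $F_{K,k-1} \equiv -F_{K,k+1} \pmod p$. The addition formula $F_{K,m+n} = F_{K,m+1}F_{K,n} + F_{K,m}F_{K,n-1}$, obtained by matrix multiplication from Theorem~\ref{k-fibonacci-matrix}, specializes at $m=k$, $n=k+1$ to $F_{K,2k+1} = F_{K,k+1}^2 + F_{K,k}^2$. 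Theorem~\ref{log-fibonacci} at the odd index $k$ gives $F_{K,k}^2 = 1 + F_{K,k+1}F_{K,k-1} \equiv 1 - F_{K,k+1}^2 \pmod p$, and substituting collapses the sum to $\beta_K(p) = F_{K,2k+1} \equiv 1 \pmod p$.

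With the first and third bullets in hand, the second bullet follows by elimination, since $\omega_K(p) \in \{1,2,4\}$ and the residues of $\alpha_K(p)$ modulo $4$ split into three mutually exclusive classes ($\pm 1$, $2$, and $0$). The main obstacle is the last step above: equation~(1) from the proof of Theorem~\ref{omega-alpha} only yields $\beta_K(p) \equiv \pm 1 \pmod p$ when $\alpha_K(p) \equiv 2 \pmod 4$, and it is the Lucas factorization $F_{K,2k} = F_{K,k}L_{K,k}$ together with the log-Fibonacci identity that actually pins down the sign to $+1$, ruling out $\omega_K(p)=2$.
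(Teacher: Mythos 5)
Your proposal is correct, and its crux differs from the paper's. For the converse of the third bullet (the only genuinely hard step), the paper combines Theorem~\ref{negative-fibonacci} with Lemma~\ref{residue-subtraction} to get $F_{K,\alpha_K(p)-n}\equiv\beta_K(p)^{1-\omega_K(p)}(-1)^{n+1}F_{K,n}\pmod p$, substitutes $n=\alpha_K(p)/2$, cancels $F_{K,\alpha_K(p)/2}$ (nonzero mod $p$ since its index is below the rank), and concludes $\omega_K(p)\mid 1-\omega_K(p)$, forcing $\omega_K(p)=1$. You instead compute $\beta_K(p)=F_{K,2k+1}$ directly from the Lucas factorization $F_{K,2k}=F_{K,k}L_{K,k}$, the addition formula $F_{K,2k+1}=F_{K,k+1}^2+F_{K,k}^2$, and Theorem~\ref{log-fibonacci} at the odd index $k$, obtaining $\beta_K(p)\equiv 1\pmod p$ and invoking Proposition~\ref{residue-order}. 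Both arguments are sound; yours pins down the residue itself rather than only a divisibility constraint on its order, at the cost of importing the addition formula (not stated in the paper, though it does follow from Theorem~\ref{k-fibonacci-matrix} exactly as you say), while the paper's stays entirely inside its own lemmas. Two smaller divergences: you handle the $p=3$ edge case of the first bullet explicitly (the paper silently cites Theorem~\ref{omega-alpha}, whose first bullet is stated only for $m>3$; your vacuity argument via $\beta_K(3)^2\equiv(-1)^{\alpha_K(3)}$ is a genuine, if minor, patch), and you obtain the second bullet by elimination from the other two rather than by the paper's direct argument ($\alpha_K(p)\equiv 0\pmod 4$ gives $2\mid\alpha_K(p)$ and $4\mid\pi_K(p)$, then apply Theorem~\ref{omega-alpha}); both are valid since the residue classes partition and $\omega_K(p)\in\{1,2,4\}$.
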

\begin{proof}
    The first item is equivalent to its corresponding part in Theorem \ref{omega-alpha}. Now suppose that $\alpha_K(p) \equiv 0 \pmod 4$. Then $2 \mid \alpha_K(p)$ and $4 \mid \pi_K(p)$, since $\alpha_K(p) \mid \pi_K(p)$. Thus, Theorem \ref{omega-alpha} can be applied to show that $\omega_K(p)=2$.
    
    Finally, consider the case where $\alpha_K(p) \equiv 2 \pmod 4$. Recall that by Lemma \ref{negative-fibonacci}, $F_{K,\pi_K(p)-n} \equiv (-1)^{n+1}F_{K,n} \pmod m$. By Lemma \ref{residue-subtraction},
    \[F_{K,\alpha_K(p)-n} \equiv \left(\beta_K(p)\right)^{1-\omega_K(p)}(-1)^{n+1}F_{K,n} \pmod p.\]
    Substituting $n = \frac{\alpha_K(p)}{2}$ gives
    \[F_{K,\frac{\alpha_K(p)}{2}} \equiv \left(\beta_K(p)\right)^{1-\omega_K(p)}(-1)^{\frac{\alpha_K(p)}{2}+1}F_{K,\frac{\alpha_K(p)}{2}} \pmod p\]
    and 
    \[1 \equiv \left(\beta_K(p)\right)^{1-\omega_K(p)}(-1)^{\frac{\alpha_K(p)}{2}+1} \pmod p.\]
    But $\alpha_K(p) \equiv 2 \pmod 4$, so $\frac{\alpha_K(p)}{2}+1$ is even, giving
    \[1 \equiv \left(\beta_K(p)\right)^{1-\omega_K(p)} \pmod p\]
    and
    \begin{align}\label{ord_beta}
        \text{ord}_p(\beta_K(p)) = \omega_K(p) \mid 1-\omega_K(p).
    \end{align}
    The only possibility for $\omega_K(p)$ that satisfies equation \eqref{ord_beta} is 1, completing the proof.
\end{proof}

\begin{theorem}[Renault \cite{Renault2}]\label{Renault-powersof2}
    $\pi_K(2^e)=2^{e-e'}\pi_K(4)$, where $2\varleq e' \varleq e$ is maximal such that $\pi_K(2^{e'}) = \pi_K(4)$, and
    $\alpha_K(2^e)=2^{e-e'}\alpha_K(4)$, where $2\varleq e' \varleq e$ is maximal such that $\alpha_K(2^{e'}) = \alpha_K(4)$.
\end{theorem}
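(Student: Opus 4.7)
The plan is to prove both statements by establishing the same structural fact in each case: once the sequence $\pi_K(2^e)$ (respectively $\alpha_K(2^e)$) strictly grows in passing from $2^e$ to $2^{e+1}$, it must continue to double at every subsequent power of two. Combined with the dichotomy supplied by Theorems \ref{lifting-exponent-period} and \ref{lifting-exponent-rank}, this forces the claimed formulas by induction on $e \geq e'$.

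For the period, set $v = \pi_K(4)$ and let $e'$ be maximal in $[2,e]$ with $\pi_K(2^{e'}) = v$. By Theorem \ref{k-fibonacci-matrix}, the equality $\pi_K(2^{e'}) = v$ is equivalent to $U^v(K) \equiv I \pmod{2^{e'}}$, so one may write $U^v(K) = I + 2^{e'} M_0$, and maximality of $e'$ is equivalent to $M_0$ having an odd entry. Squaring iteratively produces
\[
U^{2^k v}(K) \;=\; I + 2^{e'+k}\,M_k, \qquad M_{k+1} \;=\; M_k + 2^{e'+k-1}\,M_k^{2}.
\]
Because $e' \geq 2$, the correction $2^{e'+k-1}M_k^{2}$ is even, so $M_k \equiv M_0 \pmod 2$ by induction on $k$, and each $M_k$ retains an odd entry. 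Hence $U^{2^k v}(K) \not\equiv I \pmod{2^{e'+k+1}}$, ruling out the no-doubling alternative of Theorem \ref{lifting-exponent-period} at every subsequent level and forcing $\pi_K(2^{e'+k}) = 2^{k}v$ for all $k \geq 0$.

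For the rank, let $a = \alpha_K(4)$; then the value $e'$ appearing in the theorem is $\min(e,\,v_2(F_{K,a}))$. The heart of the argument is the inductive claim that $v_2(F_{K,\,2^k a}) = e'+k$ for all $k \geq 0$ (in the range $e' = v_2(F_{K,a}))$. Applying Lemma \ref{lucas} yields $F_{K,\,2^{k+1}a} = F_{K,\,2^k a}\,L_{K,\,2^k a}$, and the $K$-recurrence rewrites $L_{K,\,2^k a} = 2F_{K,\,2^k a+1} - K\,F_{K,\,2^k a}$. Since $v_2(F_{K,\,2^k a}) = e'+k \geq 2$, the second summand has $2$-adic valuation at least $e'+k > 1$; meanwhile $F_{K,\,2^k a+1}$ is odd, because $\gcd(F_{K,n},F_{K,n+1}) = 1$ for every $n$, a fact that descends through the recurrence to $\gcd(F_{K,0},F_{K,1}) = 1$. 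Therefore $v_2(L_{K,\,2^k a}) = 1$, completing the induction. An assumed equality $\alpha_K(2^{e+1}) = \alpha_K(2^e) = 2^{e-e'}a$ would require $v_2(F_{K,\,2^{e-e'}a}) \geq e+1$, contradicting the value $e$ just computed, so Theorem \ref{lifting-exponent-rank} forces doubling and delivers $\alpha_K(2^e) = 2^{e-e'}\alpha_K(4)$.

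The main obstacle in both parts is propagating the doubling across all higher powers. In the period argument this reduces to a controlled mod $2^{e'+k+1}$ expansion of iterated squares of $U^v(K)$; in the rank argument it hinges on pinpointing the exact $2$-adic valuation of $L_{K,\,2^k a}$. Both arguments crucially exploit $e' \geq 2$: the inequality $2(e'+k-1) \geq e'+k+1$ in the matrix expansion, and the bound $e'+k \geq 2$ that keeps the $K F_{K,\,2^k a}$ correction at $2$-adic valuation at least $2$ in the Lucas factorization, are precisely what justify beginning Renault's analysis at $4 = 2^2$ rather than at $2$.
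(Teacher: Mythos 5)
This theorem is stated in the paper as a citation to Renault \cite{Renault2} with no proof supplied, so there is no in-paper argument to compare against; your proposal is a self-contained proof, and it is essentially correct. Your strategy --- show that the ``no-growth'' branch of the lifting dichotomy (Theorems \ref{lifting-exponent-period} and \ref{lifting-exponent-rank}) can occur at most once above the level $4=2^2$, then induct --- is sound in both halves. For the period, writing $U^{v}(K)=I+2^{e'}M_0$ with $M_0$ having an odd entry and tracking $M_{k+1}=M_k+2^{e'+k-1}M_k^2$ correctly shows $M_k\equiv M_0\pmod 2$ (using $e'\geq 2$), hence $U^{2^kv}(K)\not\equiv I\pmod{2^{e'+k+1}}$ and the period doubles at every step. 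For the rank, the identity $L_{K,n}=2F_{K,n+1}-KF_{K,n}$ together with the coprimality of consecutive terms pins $v_2(L_{K,2^ka})=1$, so $v_2(F_{K,2^ka})=e'+k$ exactly, which rules out the non-doubling branch. Three cosmetic points to tighten: (1) the equivalence ``$\pi_K(2^{e'})=v$ iff $U^v(K)\equiv I\pmod{2^{e'}}$'' needs the divisibility $\pi_K(4)\mid\pi_K(2^{e'})$ (Corollary \ref{divisibility}) to exclude a strictly smaller period; (2) the deduction that $M_0$ has an odd entry from maximality of $e'$ only makes sense when $e'<e$ --- the case $e'=e$ should be dispatched first as trivially true; (3) the inequality you quote at the end, $2(e'+k-1)\geq e'+k+1$, is not the one your argument actually uses --- what is needed is that $2^{e'+k-1}M_k^2$ is even, i.e.\ $e'+k\geq 2$, which $e'\geq 2$ guarantees. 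None of these affects the validity of the argument.
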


Some other properties of the order are dependent upon the parity of $K$, where sequences with odd $K$ have the most similarities to the Fibonacci sequence, because it is the case $K=1$.

\subsection{K-Fibonacci Sequences for Odd K}

Any $K$-Fibonacci sequence with odd $K$ is equivalent to the classical Fibonacci sequence modulo 2, since the formula $F_{K,n}=KF_{K,n-1}+F_{K,n-2}$ can be reduced modulo 2. Properties involving powers of 2 are frequently the same as in the Fibonacci sequence, as seen by the behavior of the period, rank, and order.

\begin{theorem}\label{omegaK(2)^x-odd}
    If $K$ is odd, $\omega_K(2)=\omega_K(4)=1$, and for $x\vargeq3$, $\omega_K(2^x)=2$, precisely as in Theorem \ref{omega(2)^x} for the classical Fibonacci sequence.
\end{theorem}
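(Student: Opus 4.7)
The plan is to verify the small cases $e \in \{1,2,3\}$ by direct modular computation, and then extend to $e \geq 4$ via Renault's structural result (Theorem \ref{Renault-powersof2}). The key uniformity making this work is that $K^2 \equiv 1 \pmod 8$ for every odd $K$, so all small-modulus calculations are independent of the particular odd $K$ chosen.

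First I would handle $\omega_K(2)$ and $\omega_K(4)$. Reducing the recurrence modulo $2$ with $K$ odd collapses it to the classical Fibonacci recurrence, yielding the pattern $0,1,1,0,1,1,\ldots$, so $\pi_K(2) = \alpha_K(2) = 3$ and $\omega_K(2) = 1$. A direct calculation of $F_{K,0}, \ldots, F_{K,6}$ modulo $4$ using $K^2 \equiv 1 \pmod 4$ shows the first zero after index $0$ occurs at $n = 6$ and that $F_{K,7} \equiv 1 \pmod 4$, so $\pi_K(4) = \alpha_K(4) = 6$ and $\omega_K(4) = 1$.

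The crucial step is $e = 3$. Using Lemma \ref{lucas} I would write $F_{K,6} = F_{K,3} L_{K,3}$ with $F_{K,3} = K^2 + 1$ and $L_{K,3} = K(K^2 + 3)$. From $K^2 \equiv 1 \pmod 8$ it follows that $v_2(F_{K,3}) = 1$ and $v_2(L_{K,3}) = 2$, hence $v_2(F_{K,6}) = 3$. Combined with $6 = \alpha_K(4) \mid \alpha_K(8)$ from Corollary \ref{divisibility}, this forces $\alpha_K(8) = 6$. To rule out $\pi_K(8) = \pi_K(4) = 6$, I would compute $F_{K,5} = K^4 + 3K^2 + 1 \equiv 5 \pmod 8$; then since $8 \mid F_{K,6}$, we get $F_{K,7} \equiv F_{K,5} \equiv 5 \not\equiv 1 \pmod 8$, so $\pi_K(8) \neq 6$. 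By Theorem \ref{lifting-exponent-period}, $\pi_K(8) \in \{6, 12\}$, hence $\pi_K(8) = 12$ and $\omega_K(8) = 2$.

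For $e \geq 4$, Theorem \ref{Renault-powersof2} closes the argument. Since $\pi_K(8) > \pi_K(4)$ and periods are non-decreasing in $e$ by Corollary \ref{divisibility}, the maximal $e' \leq e$ with $\pi_K(2^{e'}) = \pi_K(4)$ is $e' = 2$, giving $\pi_K(2^e) = 2^{e-2} \cdot 6$ for $e \geq 3$. Similarly, $v_2(F_{K,6}) = 3 < 4$ implies $\alpha_K(16) > 6$, so the maximal $e'$ with $\alpha_K(2^{e'}) = \alpha_K(4) = 6$ is $e' = 3$, giving $\alpha_K(2^e) = 2^{e-3} \cdot 6$ for $e \geq 3$. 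Dividing yields $\omega_K(2^e) = 2$ for all $e \geq 3$. The main obstacle is the uniform 2-adic valuation $v_2(F_{K,6}) = 3$ for every odd $K$; once this is in hand, the remaining deductions are purely structural and lean on results already developed in the paper.
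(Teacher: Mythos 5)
Your proof is correct, and its overall skeleton matches the paper's: establish the period, rank, and order for the small moduli $2,4,8$ (and implicitly $16$), then lift to all higher powers of $2$ via Theorem \ref{Renault-powersof2}. Where you differ is in how the base cases are certified. The paper observes that the $K$-Fibonacci sequence mod $m$ depends only on $K \bmod m$ and then verifies Table \ref{table4} by exhaustive computation over the finitely many odd residues of $K$ modulo $2,4,8,16$. You instead prove the table entries symbolically: the factorization $F_{K,6}=K(K^2+1)(K^2+3)$ from Lemma \ref{lucas} together with $K^2\equiv 1\pmod 8$ gives the uniform $2$-adic valuation $v_2(F_{K,6})=3$, which simultaneously yields $\alpha_K(4)=\alpha_K(8)=6$ and $\alpha_K(16)>6$, while $F_{K,5}\equiv 5\pmod 8$ rules out $\pi_K(8)=6$ so that Theorem \ref{lifting-exponent-period} forces $\pi_K(8)=12$. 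This buys a self-contained, checkable derivation of the table (and you never need to compute the $2^4$ row of the period explicitly), at the cost of a little algebra; the paper's route is shorter to state but leans on an unexhibited finite computation. I verified your identities $F_{K,5}=K^4+3K^2+1$, $L_{K,3}=K(K^2+3)$, and the valuation claims, and your appeals to Corollary \ref{divisibility} (monotonicity of $\pi_K$ and $\alpha_K$ along divisibility) and to Theorem \ref{Renault-powersof2} to pin down $e'=2$ for the period and $e'=3$ for the rank are exactly the deductions the paper makes from its table, so the two arguments converge from that point on.
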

\begin{proof}
Note that the $K$-Fibonacci sequence modulo $m$ is equivalent to the $K+am$-Fibonacci sequence for any integer $a$, with the same period, rank, and order. Hence, the pattern of $K$-Fibonacci sequences modulo $m$ can be numerically determined by computing for $K=0, 1, \ldots, m-1$. This confirms Table \ref{table4} for odd $K$:
    \begin{table}[ht!]
    \centering
    \begin{tabular}{c|c|c|c}
         $n$&$\pi_K(n)$&$\alpha_K(n)$&$\omega_K(n)$  \\
         \hline
         $2^1$&$3$&$3$&$1$\\
         \hline
         $2^2$&$6$&$6$&$1$\\
         \hline
         $2^3$&$12$&$6$&$2$\\
         \hline
         $2^4$&$24$&$12$&$2$
    \end{tabular}
    \caption{Period, rank, and order for odd $K$ modulo powers of 2}
    \label{table4}
\end{table}

In Theorem \ref{Renault-powersof2} it is clear from the table that $e'=2$ with respect to the period and $e'=3$ with respect to the rank. Hence $\pi_K(2^x)=2^{x-2}\pi_K(4) = 2^{x-3}\pi_K(8)$ and $\alpha_K(2^x)=2^{x-3}\alpha_K(8)$ for $x \vargeq 3$. Therefore, $\omega_K(2^x)=\frac{\pi_K(2^x)}{\alpha_K(2^x)} = \frac{2^{x-3}\pi_K(8)}{2^{x-3}\alpha_K(8)} = \omega_K(8) = 2$.
\end{proof}
\ \\ %%Extra line to keep Table 4 above Theorem 4.26, otherwise it bleeds between the statement of the Theorem.%%

\begin{theorem}\label{omega-odd-K}
    Let $K$ be odd and let $m > 1$. Then Theorem \ref{omega-alpha} holds:
    \begin{itemize}
        \item For $m>3$, $\omega_K(m)=4$ if and only if $\alpha_K(m)\equiv\pm1\pmod{4}$.\\
        \item $\omega_K(m)=2$ if and only if $4\mid\pi_K(m)$ and $2\mid\alpha_K(m)$.\\
        \item $\omega_K(m)=1$ if and only if $4\not\mid\pi_K(m)$.\\
    \end{itemize}
\end{theorem}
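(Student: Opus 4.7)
The plan is to reduce the theorem to the odd-$m$ case already established in Theorem \ref{omega-alpha}. Write $m = 2^e m'$ with $m'$ odd and $e \vargeq 0$. The case $e = 0$ is exactly Theorem \ref{omega-alpha}, so assume $e \vargeq 1$. By Theorem \ref{period-lcm},
\[
\pi_K(m) = \lcm[\pi_K(2^e),\pi_K(m')] \quad \text{and} \quad \alpha_K(m) = \lcm[\alpha_K(2^e),\alpha_K(m')],
\]
and Theorem \ref{omegaK(2)^x-odd} supplies $\pi_K(2) = \alpha_K(2) = 3$, $\pi_K(4) = \alpha_K(4) = 6$, and $\pi_K(2^e) = 3\cdot 2^{e-1}$, $\alpha_K(2^e) = 3\cdot 2^{e-2}$ for $e \vargeq 3$. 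Since $\omega_K(m) \in \{1,2,4\}$ is a power of $2$ and equals $\pi_K(m)/\alpha_K(m)$, it is completely determined by $v_2(\pi_K(m)) - v_2(\alpha_K(m))$. Each of the three conditions in the theorem is likewise a $2$-adic statement about $\pi_K(m)$ and $\alpha_K(m)$, noting that $\alpha_K(m) \equiv \pm 1 \pmod 4$ is equivalent to $\alpha_K(m)$ being odd. Hence the whole argument reduces to bookkeeping of $2$-adic valuations.

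The main step is a case split on $e$. When $e = 1$, taking $\lcm$ with $3$ preserves all $2$-adic valuations, so the three conditions on $m$ coincide with those on $m'$, and Theorem \ref{omega-alpha} finishes the job. When $e \vargeq 3$, the valuations $e-1 \vargeq 2$ of $\pi_K(2^e)$ and $e-2 \vargeq 1$ of $\alpha_K(2^e)$ already dominate in every subcase, forcing $4 \mid \pi_K(m)$, $2 \mid \alpha_K(m)$, and $\omega_K(m) = 2$ uniformly. The delicate case is $e = 2$, where $\pi_K(4) = \alpha_K(4) = 6$ contributes exactly one factor of $2$ to each $\lcm$. Splitting on $\omega_K(m') \in \{1,2,4\}$, which by Theorem \ref{omega-alpha} corresponds respectively to $v_2(\alpha_K(m'))$ being $1$, $\vargeq 2$, or $0$, yields: $\omega_K(m') = 1 \Rightarrow \omega_K(m) = 1$ (since $\pi_K(m') = \alpha_K(m')$ forces $\pi_K(m) = \alpha_K(m)$); $\omega_K(m') = 2 \Rightarrow \omega_K(m) = 2$ (since $v_2(\alpha_K(m')) \vargeq 2$ dominates the $\lcm$); and $\omega_K(m') = 4 \Rightarrow \omega_K(m) = 2$ (since $\alpha_K(m')$ is odd, so the $\lcm$ with $6$ injects a fresh factor of $2$ into $\alpha_K(m)$ and collapses the quotient from $4$ to $2$).

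The main obstacle I anticipate is the $e = 2$, $\omega_K(m') = 4$ subcase, where the $K$-order of $m$ collapses from $4$ to $2$ precisely because $\alpha_K(m) = \lcm[6,\alpha_K(m')]$ gains a factor of $2$ absent from $\alpha_K(m')$. This collapse is exactly what preserves the biconditional in part (i), since $\alpha_K(m)$ then fails to be $\equiv \pm 1 \pmod 4$. The final step is a consistency check matching each computed value of $\omega_K(m)$ against the three stated conditions in every $(e, \omega_K(m'))$ subcase; this simultaneously verifies all three biconditionals of the theorem.
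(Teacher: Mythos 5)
Your strategy coincides with the paper's: write $m=2^e m'$ with $m'$ odd, reduce to Theorem \ref{omega-alpha} via the lcm identities of Theorem \ref{period-lcm} and the power-of-two data in Theorem \ref{omegaK(2)^x-odd}, and run a case split on $e\in\{1,2,\geq 3\}$ crossed with $\omega_K(m')\in\{1,2,4\}$. Your packaging of the bookkeeping as $\omega_K(m)=2^{\,v_2(\pi_K(m))-v_2(\alpha_K(m))}$ is a clean way to organize what the paper does by explicit computation of each lcm, and your identification of the $e=2$, $\omega_K(m')=4$ collapse as the delicate step matches the paper's treatment.

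There is, however, one false intermediate claim. You assert that for odd $m'$, $\omega_K(m')=2$ corresponds to $v_2(\alpha_K(m'))\geq 2$. Theorem \ref{omega-alpha} only gives $2\mid\alpha_K(m')$ and $4\mid\pi_K(m')$, i.e.\ $v_2(\alpha_K(m'))\geq 1$ and $v_2(\pi_K(m'))=v_2(\alpha_K(m'))+1$. The rank of a composite of order $2$ can have $2$-adic valuation exactly $1$: in the classical case $K=1$, take $m'=95=5\cdot 19$, where $\alpha(5)=5$ and $\alpha(19)=18$, so $\alpha(95)=\lcm[5,18]=90$ and $\pi(95)=\lcm[20,18]=180$, giving $\omega(95)=2$ with $v_2(\alpha(95))=1$. (The bound $v_2(\alpha_K(p))\geq 2$ is correct for prime powers of order $2$ by Theorem \ref{K-wyler}, but it does not survive taking lcms with factors of order $1$ or $4$.) Consequently your justification for the $e=2$, $\omega_K(m')=2$ subcase (``$v_2(\alpha_K(m'))\geq 2$ dominates the lcm''), and the blanket assertion for $e\geq 3$ that the valuations from $2^e$ ``dominate in every subcase,'' both rest on a false premise. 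The conclusions survive and the repair is short: writing $a=v_2(\alpha_K(m'))\geq 1$, one has $v_2(\alpha_K(m))=\max(1,a)=a$ and $v_2(\pi_K(m))=\max(1,a+1)=a+1$ when $e=2$, and $v_2(\alpha_K(m))=\max(e-2,a)$, $v_2(\pi_K(m))=\max(e-1,a+1)$ when $e\geq 3$, so the difference of valuations is $1$ and $\omega_K(m)=2$ in every such subcase regardless of whether the power of two or the odd part supplies the larger valuation. As written, though, the proof needs this correction before the consistency check in your final step can be carried out.
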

\begin{proof}
    Let $m = 2^\gamma n$, where $\gamma > 0$ and $n$ is odd, since the $\gamma = 0$ case was already proven in Theorem \ref{omega-alpha}. If $\gamma=1$, then $m = 2n$. Then, $\pi_K(m) = \lcm[3,\pi_K(n)]$ and $\alpha_K(m) = \lcm[3,\alpha_K(n)]$. Suppose $\omega_K(n)=4$. Then $\alpha_K(n) \equiv \pm 1 \mod 4$, and $\pi_K(n)=4\alpha_K(n)$. Substituting this into the least common multiple formulas gives $\pi_K(m) = \lcm[3,4\alpha_K(n)] = 4\cdot \lcm[3,\alpha_K(n)] = 4\alpha_K(m)$, so $\omega_K(m)=4$, as desired. If $4 \mid \pi_K(n)$ and $2 \mid \alpha_K(n)$, then $\omega_K(n)=2$ by Theorem \ref{omega-alpha}. Hence, $\pi_K(m)=\lcm[3,2\alpha_K(n)] = 2\cdot\lcm[3,\alpha_K(n)]=2\alpha_K(m)$. Likewise, $4 \mid \pi_K(m)$ and $2 \mid \alpha_K(m)$ by Corollary \ref{divisibility}. Finally, if $4 \not \mid \pi_K(n)$ and $\omega_K(n)=1$, then $\pi_K(n)=\alpha_K(n)$ and $\pi_K(m)=\alpha_K(m)=\lcm[3,\alpha_K(n)]$. Since no factor of $4$ is introduced into the least common multiple, and $\omega_K(m)=1$, the theorem is satisfied.

    If $\gamma=2$, then $m = 4n$. Once again, $n$ is odd and Theorem \ref{omega-alpha} applies to it. By Table 5, $\pi_K(4)=\alpha_K(4)=6$ for all odd $K$. If $\omega_K(n)=4$, this gives $\pi_K(m)=\lcm[6,4\alpha_K(n)]$ and $\alpha_K(m)=\lcm[6,\alpha_K(n)]$. By Theorem \ref{omega-alpha}, $\alpha_K(n)$ is odd. Either $\alpha_K(m)=2\alpha_K(n)$, $\pi_K(m)=4\alpha_K(n)$ and $\omega_K(m)=2$ (if $3 \mid n$), or $\alpha_K(m)=6\alpha_K(n)$, $\pi_K(m)=12\alpha_K(n)$, and $\omega_K(m)=2$ (if $3 \not \mid n$). In each case the theorem to be proved holds. If $\omega_K(n)=2$, $4 \mid \pi_K(m)$, and $2 \mid \alpha_K(m)$. There are again two sub-cases, depending on whether or not $3 \mid \alpha_K(n)$. If $3 \mid \alpha_K(n)$, then $\pi_K(m)=\lcm[6,2\alpha_K(n)]=2\alpha_K(n)$, $\alpha_K(m)=\lcm[6,\alpha_K(n)]=\alpha_K(n)$, and $\omega_K(m)=2$. Otherwise, $\pi_K(m)=6\alpha_K(n)$, $\alpha_K(m)=3\alpha_K(n)$, $\omega_K(m)=2$, and once again the theorem is satisfied. Finally, suppose $\omega_K(n)=1$; hence $4 \not \mid \pi_K(n)$. In this case, $\pi_K(m)=\lcm[6,\alpha_K(n)]=\alpha_K(m)$. No factor of 4 is introduced in the least common multiple, so $4 \not \mid \pi_K(m)$ and $\omega_K(m)=1$, as was to be proved.

    Now suppose $\gamma \vargeq 3$. By Theorem \ref{omegaK(2)^x-odd} $\pi_K(2^{\gamma}) = 2\alpha_K(2^{\gamma})$ and $\alpha_K(2^{\gamma})=3\cdot2^{\gamma-2}$. If $\omega_K(n)=4$, then $\alpha_K(n)$ is odd. This gives the equations $\pi_K(m)=\lcm[3\cdot2^{\gamma-1},4\alpha_K(n)]$ and $\alpha_K(m)=\lcm[3\cdot2^{\gamma-2},\alpha_K(n)]$. These differ by a factor of 2, and hence $\omega_K(m)=2$, because $2^{\gamma-1}$ and $2^{\gamma-2}$ overwhelm the powers of 2 in the latter terms. If $\omega_K(n)=2$, then $4 \mid \pi_K(n)$ and $2 \mid \alpha_K(n)$. Hence $\pi_K(m)=\lcm[3\cdot2^{(\gamma-2)+1},2\alpha_K(n)]$ and $\alpha_K(m)=\lcm[3\cdot2^{\gamma-2},\alpha_K(n)]$. Notice that each term in the least common multiple for $\pi_K(m)$ is twice that of the corresponding term for $\alpha_K(m)$; thus $\omega_K(m)=2$. Finally, suppose $\omega_K(n)=1$. Let $\pi_K(n)=\alpha_K(n)=2^a b$, where $b$ is odd. Then $\pi_K(m)=\lcm[3\cdot2^{\gamma-1},2^{a+1}b]$ and $\alpha_K(m)=\lcm[3\cdot2^{\gamma-2},2^a b]$. By Theorem \ref{omega-odd-K}, $a < 2$, so the largest power of 2 in each least common multiple comes from the left side. Other factors can be ignored, since $\omega_K(m)=1$, 2, or 4. Therefore, $\omega_K(m)=\frac{2^{\gamma-1}}{2^{\gamma-2}}=2$. Theorem \ref{omegaK(2)^x-odd} shows that $4 \mid \pi_K(2^{\gamma})$ and $2 \mid \alpha_K(2^{\gamma})$. By Corollary \ref{divisibility}, $4 \mid \pi_K(m)$ and $2 \mid \alpha_K(m)$, and the theorem is satisfied.
\end{proof}

\begin{theorem}\label{K-multiplication}
Let $K$ be odd and $m, n > 1$ be integers; alternatively, let $K$ be even and suppose $m,n > 1$ are odd. Then the following table holds:
\begin{table}[H]
\centering
\begin{tabular}{cc|ccc}
&&&$\omega_K(m)$&\\
&&&&\\
&&1&2&4\\
\hline
&&&&\\
&1&1&2&4 if $m=2$, else 2\\
&&&&\\
$\omega_K(n)$&2&2&2&2\\
&&&&\\
&4&4 if $n=2$, else 2&2&4
\end{tabular}
\caption{Table of $\omega_K\left(\text{lcm}[m,n]\right)$ for odd $K$ and/or odd $m, n$.}
\end{table}
\end{theorem}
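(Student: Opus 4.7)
The plan is to combine Theorem \ref{period-lcm}, which passes both $\pi_K$ and $\alpha_K$ through an lcm, with Theorems \ref{omega-alpha} and \ref{omega-odd-K}, which characterize $\omega_K(x)\in\{1,2,4\}$ by the 2-adic behavior of $\pi_K(x)$ and $\alpha_K(x)$: namely $\omega_K(x)=4$ iff $\alpha_K(x)$ is odd (equivalently $\equiv\pm 1\pmod 4$), $\omega_K(x)=1$ iff $4\nmid\pi_K(x)$, and $\omega_K(x)=2$ otherwise. Because $v_2(\lcm[a,b])=\max\{v_2(a),v_2(b)\}$, the 2-adic conditions driving $\omega_K(\lcm[m,n])$ are determined by those driving $\omega_K(m)$ and $\omega_K(n)$, so the table's nine entries (six up to the symmetry $\lcm[m,n]=\lcm[n,m]$) reduce to routine case analysis.

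First I would dispatch the uncontroversial entries. If $\omega_K(m)=\omega_K(n)=4$, then $\alpha_K(m)$ and $\alpha_K(n)$ are both odd, so $\alpha_K(\lcm[m,n])$ is odd and hence $\omega_K(\lcm[m,n])=4$. If either $\omega_K(m)$ or $\omega_K(n)$ equals $2$, then the corresponding $\alpha_K$ is even (forcing $\omega_K(\lcm[m,n])\ne 4$) and the corresponding $\pi_K$ is divisible by $4$ (forcing $\omega_K(\lcm[m,n])\ne 1$), so $\omega_K(\lcm[m,n])=2$. If $\omega_K(m)=\omega_K(n)=1$, then the $2$-adic valuations of $\pi_K(m)$ and $\pi_K(n)$ are each at most $1$; therefore so is that of their lcm, giving $\omega_K(\lcm[m,n])=1$.

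The main obstacle is the mixed case $\{\omega_K(m),\omega_K(n)\}=\{1,4\}$, which carries the exceptional ``$4$ if $m=2$, else $2$'' entries. Here one must decide whether $\alpha_K(\lcm[m,n])=\lcm[\alpha_K(m),\alpha_K(n)]$ remains odd after lcm-ing with the rank of the $\omega_K=1$ factor. For any $\omega_K=1$ integer $x>2$, Theorem \ref{period-even} forces $\pi_K(x)$ even; combined with $\pi_K(x)=\alpha_K(x)$ (from $\omega_K(x)=1$), this yields $\alpha_K(x)$ even, so $\alpha_K(\lcm[m,n])$ is even and $\omega_K(\lcm[m,n])$ collapses to $2$ (with $4\mid\pi_K(\lcm[m,n])$ inherited from the $\omega_K=4$ side). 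In the odd-$K$ regime the sole exception is the integer $2$ itself, where Theorem \ref{omegaK(2)^x-odd} and Table \ref{table4} give $\alpha_K(2)=3$, odd despite $\omega_K(2)=1$; taking the lcm with $2$ then preserves the oddness of the rank and keeps $\omega_K(\lcm[m,n])=4$. In the even-$K$ regime the hypothesis forces $m,n$ odd and $>1$, so neither equals $2$ and no exception arises. Verifying that $2$ is the only $\omega_K=1$ integer with odd rank---exactly the content of Theorem \ref{period-even} together with Theorem \ref{omegaK(2)^x-odd}---is the crux of the argument.
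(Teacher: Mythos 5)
Your proposal is correct and follows essentially the same route as the paper's own proof: reduce everything to the lcm identities for $\pi_K$ and $\alpha_K$ (Theorem \ref{period-lcm}) together with the characterization of $\omega_K$ by divisibility of $\pi_K$ by $4$ and of $\alpha_K$ by $2$ (Theorems \ref{omega-alpha} and \ref{omega-odd-K}), then handle the mixed $\{1,4\}$ case by observing that any $\omega_K=1$ modulus greater than $2$ has even rank (via Theorem \ref{period-even} and $\pi_K=\alpha_K$), while $\alpha_K(2)=3$ is odd. The only difference is cosmetic phrasing in terms of $2$-adic valuations.
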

\begin{proof}
    Let $\omega_K(m)=\omega_K(n)=1$. Applying Theorems \ref{omega-alpha} and \ref{omega-odd-K} gives $4 \not \mid \alpha_K(m)$ and $4 \not \mid \alpha_K(n)$; therefore $4 \not \mid \alpha_K(\lcm[m,n]) = \lcm\left[\alpha_K(m),\alpha_K(n)\right]$. Using Theorems \ref{omega-alpha} and \ref{omega-odd-K} again yields $\omega_K(\lcm[m,n])=1$.

    Suppose $\omega_K(m)=2$; $\omega_K(n)$ may be $1$, $2$, or $4$. Then $2 \mid \alpha_K(n)$, so $2 \mid \lcm\left[\alpha_K(m),\alpha_K(n)\right]=\alpha_K(\lcm[m,n])$. Since $4 \mid \pi_K(m)$, $4 \mid \lcm\left[\pi_K(m),\pi_K(n)\right]=\pi_K(\lcm[m,n])$. Therefore $\omega_K(\lcm[m,n])=2$.

    Let $\omega_K(m)=1$ and $\omega_K(n)=4$. Since $4 = \pi_K(n)=\omega_K(n)\alpha_K(n)$, it follows that $4 \mid \pi_K(n)$. Thus $4 \mid \pi_K(\lcm[m,n])$. It follows that $\omega_K(\lcm[m,n]) = 2$ or $4$. If $2 \mid \alpha_K(\lcm[m,n])$, then $\omega_K(\lcm[m,n])=2$ by Theorems \ref{omega-alpha} and \ref{omega-odd-K}, otherwise $\omega_K(\lcm[m,n])=4$. Suppose $m = 2$; then $\pi_K(m)=3$ and $\alpha_K(m)=1$. Since $\alpha_K(n) \equiv 1 \pmod 2$, it follows that $2 \not \mid \alpha_K(\lcm[m,n])$ and $\omega_K(\lcm[m,n])=4$. Whereas if $m > 2$, then $2 \mid \pi_K(m)$ by Corollary \ref{period-even}. Since $\omega_K(m)=1$, then $2 \mid \alpha_K(m)$ and $\omega_K(\lcm[m,n])=2$.

    Finally, assume $\omega_K(m)=\omega_K(n)=4$, so $\alpha_K(m) \equiv \alpha_K(n) \equiv 1 \pmod 2$. Therefore, $\alpha_K(\lcm[m,n])=\lcm[\alpha_K(m),\alpha_K(n)]$ is odd as well, and applying Theorems \ref{omega-alpha} and \ref{omega-odd-K} again yields $\omega_K(\lcm[m,n])=4$.
\end{proof}

It is now possible to prove the main theorem not only for the Fibonacci sequence, but for all $K$-Fibonacci sequences where $K$ is odd.

\begin{proof}[Proof. (i)]
Suppose $p \mid F_{K,n}$ for $n\equiv\pm1\pmod{4}$. By Lemmas \ref{residue-subtraction} and \ref{period-divisibility}, this is true if and only if $\alpha_K(p) \mid n$. And for $n\equiv\pm1\pmod{4}$, this holds if and only if $\alpha_K(p)\equiv\pm1\pmod4$, which by Theorem \ref{K-wyler} is true if and only if $\omega_K(p)=4$. From Table \ref{K-multiplication}, $\omega_K(m)=4$ if and only if $m=2^j\cdot p_1^{e_1}\cdots p_r^{e_r}$ for $j\varleq1$ and where $\omega_K(p_i^{e_i})=4$ for $1\varleq i \varleq r$.
\end{proof}

\begin{proof}[Proof. (ii)]
Suppose that $p \mid F_{K,n}$ for $n\equiv2\pmod4$ and $p$ is not also a divisor of some $F_{K,t}$ for an odd index $t$. By Lemmas \ref{residue-subtraction} and \ref{period-divisibility}, this is true if and only if $\alpha_K(p) \mid n$ and $\alpha_K(p) \not\mid t$ for any odd $t$. Hence, $n\equiv2\pmod4$ if and only if $2\mid\alpha_K(p)$. Because $\alpha_K(p)\mid n$ and $4\not\mid n$, it follows that $4\not\mid \alpha_K(p)$, so $\alpha_K(p)\equiv2\pmod4$. By Theorem \ref{K-wyler}, $\alpha_K(p)\equiv2\pmod4$ if and only if $\omega_K(p)=1$. From Table \ref{K-multiplication}, $\omega_K(m)=1$ for $m=2^j\cdot p_1^{e_1}\cdots p_r^{e_r}$ whenever $j\vargeq3$ and every $p_i^{e_i}$ has order 1.
\end{proof}

\begin{proof}[Proof. (iii)]
This follows directly from the proof of (i) and (ii), but to illuminate the situation, consider $m=2^jp_1^{e_1}\cdots p_t^{e_t}$. By Theorems \ref{omegaK(2)^x-odd} and \ref{K-multiplication}, if $j\vargeq3$, then $\omega_K(m)=2$, and if even a single $p_i^{e_i}$ has order $2$, then $\omega_K(m)=2$. This absorbs all divisors of $F_{K,n}$ for indices $n\equiv0\pmod4$ as well as multiples of $2^x$ times any divisor of $F_{K,n}$ for $x\vargeq3$, among other numbers.
\end{proof}

\subsection{K-Fibonacci Sequences for Even K}

The properties of $K$-Fibonacci sequences for odd numbers hold for both odd and even $K$. However, when both $K$ and $n$ are even, different properties occur for $\omega_K(n)$.

\begin{theorem}\label{omegaK(2)^x-even}
    Let $\nu_2(n)$ be the 2-adic valuation. If $K$ is even and $x \vargeq 1$, then $\pi_K(2^x)=\alpha_K(2^x)=2^{x+1-\nu_2(\gcd(2^x,K))}$ and $\omega_K(2^x)=1$.
\end{theorem}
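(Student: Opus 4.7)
Write $K = 2^s k$ with $s \geq 1$ and $k$ odd, so that $\nu_2(\gcd(2^x,K)) = \min(x,s)$ and the desired formula reads
$\pi_K(2^x) = \alpha_K(2^x) = 2^{\max(1,\,x-s+1)}$.
The plan is to first compute the exact 2-adic valuation of every $F_{K,n}$, read off the rank, and then verify the residue $\beta_K(2^x)$ equals $1$, which by Proposition \ref{residue-order} yields $\omega_K(2^x)=1$.

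First I would establish a companion lemma on the $K$-Lucas sequence: $\nu_2(L_{K,n}) = 1$ if $n$ is even and $\nu_2(L_{K,n}) = s$ if $n$ is odd. This follows by induction on $n$, splitting into the two parities and applying $L_{K,n+1}=KL_{K,n}+L_{K,n-1}$: in each case one of the two summands on the right has strictly smaller 2-adic valuation than the other, so the valuation of the sum is forced. Combining this with the identity $F_{K,2m}=F_{K,m}L_{K,m}$ from Lemma \ref{lucas}, and the observation that reducing the recurrence modulo $2$ (using $K\equiv 0$) gives $F_{K,n}$ odd iff $n$ is odd, a second induction on $\nu_2(n)$ delivers the valuation formula
\[
\nu_2(F_{K,n}) \;=\; \begin{cases} 0 & n \text{ odd},\\ \nu_2(n)+s-1 & n \text{ even}.\end{cases}
\]
The rank $\alpha_K(2^x)$ is the least $n\geq 1$ with $\nu_2(F_{K,n})\geq x$; minimizing over even $n$ subject to $\nu_2(n)\geq x-s+1$ yields exactly $2^{\max(1,\,x-s+1)}$, which is the claimed value.

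It remains to prove $\omega_K(2^x)=1$, equivalently $F_{K,N+1}\equiv 1\pmod{2^x}$ with $N=\alpha_K(2^x)$. Since $\nu_2(KF_{K,N})\geq s+x\geq x$, the recursion gives $F_{K,N+1}\equiv F_{K,N-1}\pmod{2^x}$, so it suffices to check $F_{K,N-1}\equiv 1\pmod{2^x}$. When $x\leq s$ this is immediate because $N=2$ and $F_{K,1}=1$. When $x>s$, write $N=2M$ with $M=2^{x-s}\geq 2$, so $M$ is even and $\nu_2(F_{K,M})=x-1$. Squaring the matrix identity of Theorem \ref{k-fibonacci-matrix} gives
\[
F_{K,2M-1}\;=\;F_{K,M-1}^{2}+F_{K,M}^{2},
\]
and since $\nu_2(F_{K,M}^2)=2x-2\geq x$, we get $F_{K,N-1}\equiv F_{K,M-1}^{2}\pmod{2^x}$. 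To pin down $F_{K,M-1}^{2}$, I would take the determinant identity $F_{K,M+1}F_{K,M-1}-F_{K,M}^{2}=(-1)^M=1$ (Theorem \ref{log-fibonacci}) and substitute $F_{K,M+1}=KF_{K,M}+F_{K,M-1}$, producing
\[
F_{K,M-1}^{2} \;=\; 1 + F_{K,M}^{2} - KF_{K,M}F_{K,M-1}.
\]
The two error terms have $\nu_2$ equal to $2x-2$ and $s+x-1$ respectively, each at least $x$ (using $x\geq 2$ and $s\geq 1$), so $F_{K,M-1}^{2}\equiv 1\pmod{2^x}$, hence $F_{K,N-1}\equiv 1\pmod{2^x}$.

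The main obstacle is step four: proving $F_{K,N-1}\equiv 1\pmod{2^x}$ rather than merely $\pm 1$. Naively, knowing $\beta_K(2^x)^2\equiv 1\pmod{2^x}$ only constrains the residue to a coset of order $\leq 2$, and for $x\geq 3$ there are four square roots of $1$ modulo $2^x$; the determinant-plus-recursion trick above is what forces the particular root to be $1$ with no ambiguity. The rest of the argument is bookkeeping of 2-adic valuations on top of the structural identities from Section 4.
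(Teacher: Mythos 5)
Your proof is correct, but it takes a genuinely different route from the paper. The paper argues by induction on $x$: it verifies the base cases $x=1,2$ numerically (using the fact that the sequence mod $2^x$ depends only on $K$ mod $2^x$), then invokes Renault's lifting theorems ($\pi_K(2^{x+1})\in\{\pi_K(2^x),2\pi_K(2^x)\}$ and likewise for $\alpha_K$) and rules out the ``no growth'' branch by a contradiction built on the factorization $F_{K,2\rho}=F_{K,\rho}L_{K,\rho}$ and a parity count. You instead prove a closed-form $2$-adic valuation formula, $\nu_2(F_{K,n})=\nu_2(n)+s-1$ for even $n$ and $0$ for odd $n$ (where $s=\nu_2(K)$), driven by the companion computation $\nu_2(L_{K,n})\in\{1,s\}$ according to parity; the rank then drops out by minimization, and you pin down $\beta_K(2^x)=1$ via the matrix-squaring identity $F_{K,2M-1}=F_{K,M}^2+F_{K,M-1}^2$ combined with the determinant identity of Theorem \ref{log-fibonacci}. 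I checked the valuation bookkeeping ($2x-2\geq x$ and $s+x-1\geq x$ in the relevant range) and the case split at $x\leq s$ versus $x>s$; both are sound, and your formula $2^{\max(1,\,x-s+1)}$ agrees with $2^{x+1-\nu_2(\gcd(2^x,K))}$. What your approach buys is more information (the exact valuation of every term, not just the rank) and independence from the lifting theorems; what the paper's buys is brevity, at the cost of a contradiction argument whose key step (``$F_{K,\rho-1}=F_{K,\rho+1}$'') is stated rather loosely. Your explicit identification of the square root of $1$ modulo $2^x$ in the final step is a point the paper's proof handles only implicitly through its inductive structure, and is worth making explicit exactly as you do.
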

\begin{proof}
    The proof proceeds similarly to Theorem \ref{omegaK(2)^x-odd}; numerical computations yield the first two rows of the table below for all even $K$. The result can then be proven for higher powers by induction.

    Suppose $2^{x+1} \mid K$. Modulo $K$, this sequence is equivalent to the $K=0$ case, which is the rather uninteresting sequence $0, 1, 0, 1, \ldots$, with period 2, rank 2, and order 1. Since $x+2-\nu_2(\gcd(2^{x+1},K))=x+1-\nu_2(\gcd(2^{x},K))$, the theorem is satisfied.

    Now suppose $2^{x+1} \not \mid K$. By Theorems \ref{lifting-exponent-period} and \ref{lifting-exponent-rank}, $\pi_K(2^{x+1})=\pi_K(2^x)$ or $2\pi_K(2^x)$, and likewise for the rank. Assume towards a contradiction that $\alpha_K(2^{x+1})=\alpha_K(2^x)=2\rho$ for brevity. 

    First assume that $\rho = 1$. Then $\alpha_K(2^{x+1})=\alpha_K(2^x)=2$. The recurrence gives $F_{K,2}=K$; hence $2^{x+1} \mid K$, which contradicts the assumption that $2^{x+1} \not \mid K$. Applying the recurrence again gives $F_{K,3}=K^2+1 \equiv 1 \pmod{2^{x+1}}$, so $\pi_K(2^{x+1})=\alpha_K(2^{x+1})=2$ and $\omega_K(2^{x+1})=1$.
    
    If $\rho > 1$, then $F_{K,2\rho}=F_{K,\rho}L_{K,\rho}=F_{K,\rho}(F_{K,\rho+1}+F_{K,\rho-1}) \equiv 0 \pmod{2^{x+1}}$ by Lemma \ref{lucas}. By Lemma \ref{negative-fibonacci}, $F_{K,\rho-1}=F_{K,\rho+1}$; hence $2F_{K,\rho}F_{K,\rho+1} \equiv 0 \pmod{2^{x+1}}$. But $F_{K,\rho+1}$ is odd since $\rho+1$ is odd; therefore $2^x \mid F_{K,\rho}$, contradicting the fact that $\alpha_K(2^x)=2\rho$.

\end{proof}
    
\begin{table}[]
    \centering
    \begin{tabular}{c|c|c|c|c}
         $\pi_K(n)$&$K=2$&$4$&$6$&$8$  \\
         \hline
         $n=2^1$&2&2&2&2\\
         \hline
         $2^2$&4&2&4&2\\
         \hline
         $2^3$&8&4&8&2\\
         \hline
         $2^4$&16&8&16&4
    \end{tabular}
    \caption{Pisano periods for various even $K$}
    \label{table5}
\end{table}

\begin{theorem}\label{omega-even-K}
    Let $K$ be even and let $m > 1$ be such that $4 \not \mid m$. Then Theorem \ref{omega-alpha} holds:
    \begin{itemize}
        \item For $m>3$, $\omega_K(m)=4$ if and only if $\alpha_K(m)\equiv\pm1\pmod{4}$.\\
        \item $\omega_K(m)=2$ if and only if $4\mid\pi_K(m)$ and $2\mid\alpha_K(m)$.\\
        \item $\omega_K(m)=1$ if and only if $4\not\mid\pi_K(m)$.\\
    \end{itemize}
\end{theorem}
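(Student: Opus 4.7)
The plan is to mirror the structure of the proof of Theorem \ref{omega-odd-K} but with far fewer cases, since the hypothesis $4 \not\mid m$ restricts us to $m$ odd or $m = 2n$ with $n$ odd. When $m$ is odd, Theorem \ref{omega-alpha} was already stated for an arbitrary integer $K$ (the odd-ness of $K$ played no role there), so nothing new is required. Hence the heart of the matter is the case $m = 2n$ with $n$ odd and $K$ even.

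First I would compute the base data for the prime $2$. By Theorem \ref{omegaK(2)^x-even} applied with $x=1$ and $K$ even, we have $\nu_2(\gcd(2,K))=1$, so $\pi_K(2)=\alpha_K(2)=2$ and $\omega_K(2)=1$. Next, using Theorem \ref{period-lcm}, write
\[
\pi_K(m) = \lcm[\pi_K(2),\pi_K(n)] = \lcm[2,\pi_K(n)], \qquad \alpha_K(m) = \lcm[2,\alpha_K(n)].
\]
From here I would split into the three subcases $\omega_K(n)\in\{1,2,4\}$ (valid for odd $n$ by Theorem \ref{omega-alpha}) and in each case compute $\pi_K(m)$ and $\alpha_K(m)$ directly. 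When $\omega_K(n)=4$, $\alpha_K(n)$ is odd and $\pi_K(n)=4\alpha_K(n)$, giving $\alpha_K(m)=2\alpha_K(n)$, $\pi_K(m)=4\alpha_K(n)$, and $\omega_K(m)=2$. When $\omega_K(n)=2$, both $\pi_K(n)$ and $\alpha_K(n)$ are already even, so the $\lcm$ with $2$ changes nothing, yielding $\omega_K(m)=2$. When $\omega_K(n)=1$, Corollary \ref{alpha-odd} forces $\alpha_K(n)$ to be even, and combining with $4\not\mid\pi_K(n)=\alpha_K(n)$ gives $\alpha_K(n)\equiv 2\pmod 4$; then the $\lcm$ with $2$ again does nothing, so $\pi_K(m)=\alpha_K(m)=\alpha_K(n)$, $\omega_K(m)=1$, and $4\not\mid\pi_K(m)$.

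Having done this, each of the three cases in the theorem can be verified by checking both directions. For the ``$\omega_K(m)=4$'' clause: when $m$ is odd we appeal directly to Theorem \ref{omega-alpha}, and when $m=2n$ the computation above shows $\omega_K(m)$ never equals $4$, consistent with the fact that if $m=2n$ then $2\mid\alpha_K(m)$, so $\alpha_K(m)\not\equiv\pm1\pmod 4$ and the biconditional holds vacuously on both sides. The other two clauses follow from the case analysis, using that the three cases are mutually exclusive and exhaustive over positive integers $m$ with $4\not\mid m$.

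The main obstacle, such as it is, will be bookkeeping: making sure that in the $m=2n$ case the $\alpha_K(m)\equiv \pm 1\pmod 4$ side of (i) really is false so that the biconditional holds trivially, and checking that the divisibility conditions in (ii) and (iii) line up with the computed orders in each subcase. There is no new arithmetic identity needed beyond Theorem \ref{omegaK(2)^x-even}, Theorem \ref{period-lcm}, and Theorem \ref{omega-alpha}; the work is essentially combinatorial.
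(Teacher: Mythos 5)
Your proposal is correct, and it actually diverges from --- and quietly repairs --- the argument printed in the paper. The paper's own proof of this theorem asserts that for $m=2n$ with $n$ odd, $\pi_K(m)=\lcm[1,\pi_K(n)]=\pi_K(n)$ and $\alpha_K(m)=\lcm[1,\alpha_K(n)]=\alpha_K(n)$, and concludes that ``neither the period, rank, nor order change when an odd number is multiplied by $2$.'' That intermediate claim is wrong for even $K$: by Theorem \ref{omegaK(2)^x-even} one has $\pi_K(2)=\alpha_K(2)=2$, not $1$, and while $\lcm[2,\pi_K(n)]=\pi_K(n)$ because the period is even for every modulus exceeding $2$ (Theorem \ref{period-even}), the rank can double. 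In the Pell sequence, $\alpha_2(5)=3$ and $\omega_2(5)=4$, yet $\alpha_2(10)=6$ and $\omega_2(10)=2$, so the order does change. Your three-way case analysis on $\omega_K(n)$ is exactly what is needed to see that the \emph{conclusion} of the theorem nevertheless survives: in the $\omega_K(n)=4$ subcase you correctly get $\alpha_K(m)=2\alpha_K(n)\equiv 2\pmod 4$, $\pi_K(m)=4\alpha_K(n)$, and $\omega_K(m)=2$, so clause (i) holds with both sides false and clause (ii) with both sides true; the $\omega_K(n)=2$ and $\omega_K(n)=1$ subcases go through as you describe. The only microscopic gap is the edge case $n=1$ (that is, $m=2$), where your appeal to Corollary \ref{alpha-odd} to force $\alpha_K(n)$ even does not apply, since that corollary requires the modulus to exceed $2$ and $\alpha_K(1)=1$ is odd; but $m=2$ is dispatched directly by Theorem \ref{omegaK(2)^x-even}, which gives $\pi_K(2)=\alpha_K(2)=2$ and $\omega_K(2)=1$, consistent with all three clauses. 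With that one sentence added, your argument is complete, and it is the proof this theorem should have.
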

\begin{proof}
    It may be assumed that $m$ is even, since the theorem was proven for odd $m$ in Theorem \ref{omega-alpha}. Let $m = 2n$, where $n$ is odd as $4 \not \mid m$. Then $\pi_K(m) = \lcm[1,\pi_K(n)]=\pi_K(n)$ and $\alpha_K(m) = \lcm[1,\alpha_K(n)]=\alpha_K(n)$. Therefore, neither the period, rank, nor order change when an odd number is multiplied by 2, and the theorem is satisfied.
\end{proof}

If $m$ is restricted to odd numbers, the ``multiplication table'' for $K$-Fibonacci sequences is the same for even $K$ as for odd $K$. However, there are some slight differences when $m$ and $n$ are both even:

\begin{theorem}\label{table_even}
Let $m$, $n$, and $K$ be positive integers where $K$ is even, then:
\begin{table}[]
\centering
\begin{tabular}{cc|ccc}
&&&$\omega_K(m)$&\\
&&&&\\
&&1&2&4\\
\hline
&&&&\\
&1&1&1 or 2&1 or 2\\
&&&&\\
$\omega_K(n)$&2&1 or 2&2&2\\
&&&&\\
&4&1 or 2&2&4
\end{tabular}
\caption{Table of $\omega_K\left(\text{lcm}[m,n]\right)$ for even $K$.}
\end{table}
\end{theorem}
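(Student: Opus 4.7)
The plan is to proceed by case analysis on $(\omega_K(m),\omega_K(n))$, applying the identities $\pi_K(\lcm[m,n])=\lcm[\pi_K(m),\pi_K(n)]$ and $\alpha_K(\lcm[m,n])=\lcm[\alpha_K(m),\alpha_K(n)]$ from Theorem~\ref{period-lcm} in each cell, and then dividing to recover $\omega_K(\lcm[m,n])$. I would write $m=2^a m_0$ and $n=2^b n_0$ with $m_0,n_0$ odd, so that $\pi_K(2^a)=\alpha_K(2^a)$ are the explicit powers of $2$ given by Theorem~\ref{omegaK(2)^x-even}, while the odd-argument table Theorem~\ref{K-multiplication} governs $\lcm[m_0,n_0]$. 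Since the odd parts of $\pi_K$ and $\alpha_K$ of a given argument always agree, the calculation reduces to comparing $2$-adic valuations: letting $p_a:=\nu_2(\pi_K(2^a))$, the ratio $\omega_K(\lcm[m,n])$ is determined by the interaction of $p_a$, $p_b$, $\nu_2(\pi_K(m_0))$, $\nu_2(\alpha_K(m_0))$, $\nu_2(\pi_K(n_0))$, $\nu_2(\alpha_K(n_0))$.

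The diagonal and cross-order entries are handled first. For $(1,1)$ and $(2,2)$ the common ratio $k\in\{1,2\}$ pulls directly out of the LCM. For $(4,4)$, I would begin by showing that $\omega_K(\cdot)=4$ forces its argument to be odd when $K$ is even: Theorems~\ref{omega-alpha} and \ref{omega-even-K} require the rank to be $\equiv\pm1\pmod4$, but $2\mid\alpha_K(2^a)$ for $a\geq1$ by Theorem~\ref{omegaK(2)^x-even}, and a short LCM computation rules out $\omega_K(2^a m_0)=4$ for any $a\geq1$. Then $m,n$ are both odd with odd ranks, and the factor of $4$ pulls out cleanly. For $(2,4)$ and $(4,2)$, the order-$4$ side contributes an odd rank while the order-$2$ side contributes an even rank (with $\nu_2(\pi_K(\cdot))=\nu_2(\alpha_K(\cdot))+1\geq2$), so $\alpha_K(\lcm[m,n])$ is even and $4\mid\pi_K(\lcm[m,n])$; the characterizations of orders $1$ and $4$ are both excluded, forcing $\omega_K(\lcm[m,n])=2$.

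The heart of the argument is the four $(1,\cdot)$ and $(\cdot,1)$ entries. Treating $(1,4)$ as representative: $n$ must be odd with $\alpha_K(n)$ odd, so $\pi_K(n)=4\alpha_K(n)$ contributes $2$-part exactly $4$. I would show $\nu_2(\pi_K(\lcm[m,n]))=\max(p_a,\nu_2(\pi_K(m_0)),2)$ and $\nu_2(\alpha_K(\lcm[m,n]))=\max(p_a,\nu_2(\alpha_K(m_0)))$. A short subcase split on $\omega_K(m_0)\in\{1,2,4\}$, together with the constraint $\omega_K(m)=1$ (which forces $p_a\geq\nu_2(\pi_K(m_0))$ whenever $m_0\neq1$, via the same identity $\pi_K(m)=\alpha_K(m)$), produces a ratio of only $1$ or $2$ in every subcase and never $4$. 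Both outcomes arise for $K=2$: $m=2$ with $n$ an order-$4$ prime gives ratio $2$, while $m=4$ and the same $n$ gives ratio $1$. The cases $(1,2),(2,1),(4,1)$ yield to analogous $2$-adic bookkeeping.

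The main obstacle will be precisely this $(1,\cdot)$ analysis. In the odd-$K$ setting of Theorem~\ref{K-multiplication}, $\pi_K(2^a)$ always carries an odd factor of $3$ that does not interfere with the $2$-adic structure of odd-argument ranks, so powers of $2$ contribute predictably. For even $K$, $\pi_K(2^a)$ is a pure power of $2$ controlled by $\nu_2(K)$, and it may either dominate or be dominated by $\nu_2(\pi_K(m_0))$, generating genuinely different values of $\omega_K(\lcm[m,n])$. Ruling out the value $4$ in every $(1,\cdot)$ subcase is the most delicate step, as it has to be read off from the $2$-adic interaction between $p_a$ and the parameters of $m_0$ rather than from a single characterization theorem.
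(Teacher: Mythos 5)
Your proposal follows essentially the same route as the paper's proof: a case analysis on the pair $(\omega_K(m),\omega_K(n))$, using $\pi_K=\omega_K\alpha_K$ together with the lcm identities of Theorem \ref{period-lcm}, and then comparing the two lcms. The additional $m=2^a m_0$ decomposition and explicit $2$-adic bookkeeping is more machinery than the paper deploys (the paper simply writes $\pi_K(\lcm[m,n])=\lcm[\omega_K(m)\alpha_K(m),\omega_K(n)\alpha_K(n)]$ and reads the ratio against $\lcm[\alpha_K(m),\alpha_K(n)]$ directly), but it is sound, and your $(1,\cdot)$ analysis and your lcm argument ruling out $\omega_K=4$ for even arguments both check out.

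One step would fail as literally stated. In the $(2,4)$ and $(4,2)$ cells you conclude $\omega_K(\lcm[m,n])=2$ because $2\mid\alpha_K(\lcm[m,n])$ and $4\mid\pi_K(\lcm[m,n])$ ``exclude the characterizations of orders $1$ and $4$.'' For even $K$ those characterizations (Theorems \ref{omega-alpha} and \ref{omega-even-K}) are only established for moduli not divisible by $4$, and the order-$2$ argument can be divisible by $4$ (e.g.\ $\omega_2(12)=2$); indeed the paper's own example $\omega_2(8)=1$ with $4\mid\pi_2(8)$ shows that $4\mid\pi_K$ does \emph{not} exclude order $1$ in that regime. The repair is already in your framework: with $\alpha_K(n)$ odd and $\nu_2(\alpha_K(m))\geq 1$ (for odd $m$ by Theorem \ref{omega-alpha}, for even $m$ because $\alpha_K(2)=2$ divides $\alpha_K(m)$), the valuation comparison of $\lcm[2\alpha_K(m),4\alpha_K(n)]$ with $\lcm[\alpha_K(m),\alpha_K(n)]$ gives a ratio of exactly $2$, so compute the ratio rather than invoking the characterizations. (The paper's own treatment of this cell instead asserts that $m$ and $n$ must both be odd, which $\omega_2(12)=2$ contradicts, so your valuation route is in fact the more robust one here.)
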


\begin{proof}
    The diagonal cases are simple to prove. Suppose $\omega_K(m)=\omega_K(n)=t$; then $\pi_K(m)=t\alpha_K(m)$ and $\pi_K(n)=t\alpha_K(n)$. Hence $\pi_K(\lcm[m,n])=\lcm[\pi_K(m),\pi_K(n)]=\lcm[t\alpha_K(m),t\alpha_K(n)]=t\lcm[\alpha_K(m),\alpha_K(n)]=t\alpha_K(\lcm[m,n])$. Therefore, $\omega_K(\lcm[m,n])=t$.

    Next, suppose that $\omega_K(m)=2$ and $\omega_K(n)=4$. Note that $m$ and $n$ are both odd; by Theorem \ref{omegaK(2)^x-even}, if $m$ were even then $\omega_K(m)=1$. Hence Theorem \ref{K-multiplication} applies and $\omega_K(\lcm[m,n])=2$.
    
    For the other cases, it remains to prove that if $\omega_K(m)=1$, then $\omega_K(\lcm[m,n])\neq4$, as the table is symmetric. Consider the case where $\omega_K(n)=2$. Then $\pi_K(\lcm[m,n])=\lcm[\alpha_K(m),2\alpha_K(n)]$ and $\alpha_K(\lcm[m,n])=\lcm[\alpha_K(m),\alpha_K(n)]$. Hence, either $\pi_K(\lcm[m,n])=2\alpha_K(\lcm[m,n])$ or $\alpha_K(\lcm[m,n])$.
    
    Finally, suppose $\omega_K(n)=4$. This yields $\pi_K(\lcm[m,n])=\lcm[\alpha_K(m),4\alpha_K(n)]$ and $\alpha_K(\lcm[m,n])=\lcm[\alpha_K(m),\alpha_K(n)]$. As $\omega_K(n)\neq1$, Theorem \ref{omegaK(2)^x-even} can once again be applied to show that $n$ is odd. By Theorem \ref{omega-even-K}, $\alpha_K(n)$ is odd; by Corollary \ref{alpha-odd}, $\alpha_K(m)$ is even. Let $\alpha_K(m)=2a$, so that the system above becomes $\pi_K(\lcm[m,n])=\lcm[2a,4\alpha_K(n)]=2\lcm[a,2\alpha_K(n)]$ and $\alpha_K(\lcm[m,n])=\lcm[2a,\alpha_K(n)]$. These differ by at most a factor of 2, so $\omega_K([\lcm[m,n])\neq4$, as was to be proved.
\end{proof}

This allows a weaker version of the main theorem to be extended to $K$-Fibonacci sequences, including ones with even $K$.  The following portions of Theorem \ref{main_theorem} generalize to even $K$. Unfortunately, for even positive integers $m$ it may only be said that $\omega_K(m) \neq 4$, which differs slightly from Theorem $K$.

\begin{theorem}\label{mainthm-even-K}
    For an even integer $K$ and a positive odd integer $m$,
    \begin{itemize}
        \item $\omega_K(m) = 4$ for $m > 3$ if and only if $m$ has prime factorization $m = p_1^{e_1} \cdots p_r^{e_r}$, where for each $1 \varleq i \varleq r$, there is an odd index $n_i$ such that $p_i \mid F_{n_i}$. Additionally, no even numbers exist such that $\omega_K(m)=4$.\\
        \item $\omega_K(m) = 1$ for $m > 3$ if and only if $m$ has prime factorization $m = p_1^{e_1} \cdots p_r^{e_r}$, where all primes $p_i$ are odd and for each $1 \varleq i \varleq r$, there is an index $n_i \equiv 2 \pmod 4$ such that $p_i \mid F_{n_i}$, but $p_i \not \mid F_{\nu_i}$ for all odd $\nu_i$.\\
        \item $\omega_K(m)=2$ for all other odd positive integers $m$.
    \end{itemize}
\end{theorem}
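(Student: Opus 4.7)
The plan is to mirror the three-case proof of Theorem~\ref{main_theorem}, systematically swapping every odd-$K$ tool for its even-$K$ counterpart from Section~4. The core dictionary is: Theorem~\ref{K-wyler} relates $\omega_K(p)$ to $\alpha_K(p)\bmod 4$ for odd primes; Theorem~\ref{omega-prime-powers} collapses $\omega_K(p^e)$ to $\omega_K(p)$; Theorem~\ref{period-divisibility} translates $\alpha_K(p)\mid n$ into $p\mid F_{K,n}$; and the multiplication table of Theorem~\ref{K-multiplication} (which applies to even $K$ provided both arguments are odd) lets us assemble prime-power orders into the order of a composite $m$ via Theorem~\ref{period-lcm}.

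For part~(i), take odd $m>3$ and factor $m=p_1^{e_1}\cdots p_r^{e_r}$. By Theorems~\ref{omega-prime-powers} and~\ref{K-wyler}, $\omega_K(p_i^{e_i})=4$ iff $\alpha_K(p_i)\equiv\pm 1\pmod 4$, i.e.\ iff $\alpha_K(p_i)$ is odd; by Theorem~\ref{period-divisibility}, the latter is equivalent to the existence of an odd index $n_i$ with $p_i\mid F_{K,n_i}$ (take $n_i=\alpha_K(p_i)$ for the forward implication; conversely, $\alpha_K(p_i)\mid n_i$ odd forces $\alpha_K(p_i)$ odd). Iterating the $(4,4)\mapsto 4$ diagonal of Theorem~\ref{K-multiplication} then yields $\omega_K(m)=4$ iff every $\omega_K(p_i^{e_i})=4$. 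For the addendum that no even $m$ satisfies $\omega_K(m)=4$, observe that Theorem~\ref{omegaK(2)^x-even} gives $\omega_K(2^x)=1$ for every $x\geq 1$; the $\omega_K(n)=1$ row of Theorem~\ref{table_even} never contains the value $4$, so the decomposition $m=2^x\cdot(m/2^x)$ through Theorem~\ref{period-lcm} forbids $\omega_K(m)=4$.

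Part~(ii) is symmetric. Every prime in the factorization of an odd $m$ is odd, so Theorems~\ref{omega-prime-powers} and~\ref{K-wyler} give $\omega_K(p_i^{e_i})=1$ iff $\alpha_K(p_i)\equiv 2\pmod 4$. Via Theorem~\ref{period-divisibility}, this is equivalent to the conjunction: some $n_i\equiv 2\pmod 4$ has $p_i\mid F_{K,n_i}$, and no odd $\nu_i$ does. (Forward: take $n_i=\alpha_K(p_i)$; the odd non-divisibility is automatic because $\alpha_K(p_i)$ is even. Backward: the odd prohibition forces $\alpha_K(p_i)$ even, and $\alpha_K(p_i)\mid n_i$ with $4\nmid n_i$ forces $4\nmid\alpha_K(p_i)$.) The $(1,1)\mapsto 1$ entry of Theorem~\ref{K-multiplication} closes the assembly. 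Part~(iii) is then immediate: $\omega_K$ only takes values in $\{1,2,4\}$, so every odd $m$ not captured by (i) or (ii) has $\omega_K(m)=2$.

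The main obstacle I expect is the bookkeeping for the even-$m$ clause of~(i): the $\omega_K(n)=1$ row of Theorem~\ref{table_even} has several ``$1$ or $2$'' entries, so the exact order of $\lcm[2^x,m/2^x]$ is not pinned down by the table. However, the addendum only requires the weaker statement ``never $4$,'' which the table does supply, so this ambiguity is harmless; in fact it is precisely why the theorem restricts the three-way split to odd $m$ and settles for a single non-equality in the even case rather than a sharper classification.
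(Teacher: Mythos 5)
Your proposal is correct and follows essentially the same route as the paper's own proof: reduce to odd prime powers via Theorem~\ref{omega-prime-powers}, translate divisibility of $F_{K,n_i}$ into congruences on $\alpha_K(p_i)$ via Lemma~\ref{residue-subtraction} and Theorem~\ref{period-divisibility}, convert to orders via Theorem~\ref{K-wyler}, and assemble with the multiplication tables, using Theorem~\ref{omegaK(2)^x-even} together with the order-$1$ row of Theorem~\ref{table_even} to rule out $\omega_K(m)=4$ for even $m$. Your write-up of that last addendum is in fact more explicit than the paper's (whose opening sentence is truncated), but the underlying argument is the same.
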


\begin{proof}
It is first useful to note that by Theorem \ref{omegaK(2)^x-even} and Table \ref{table_even}. Let $p$ be an odd prime; since $m$ is odd all prime divisors of $m$ must be odd. Suppose $p \mid F_{K,n}$ for $n\equiv\pm1\pmod{4}$. By Lemmas \ref{residue-subtraction} and \ref{period-divisibility}, which are true for all $K$, $\alpha_K(p) \mid n$. Since $p \mid F_{K,n}$ for some odd $n$, then $\alpha_K(n)$ is odd; thus $\omega_K(p)=4$ by Theorem \ref{K-wyler}, and vice versa.

Now suppose $p \mid F_{K,n}$ for $n\equiv2\pmod4$ but $p$ does not divide $F_{K,t}$ for any odd index $t$. By Lemmas \ref{residue-subtraction} and \ref{period-divisibility}, this is true if and only if $\alpha_K(p) \mid n$ and $\alpha_K(p) \not\mid t$ for any odd $t$. Hence $2\mid\alpha_K(p)$ and $4\not\mid \alpha_K(p)$, so $\alpha_K(p)\equiv2\pmod4$. Theorem \ref{K-wyler} can now be applied to show that $\omega_K(p)=1$, and vice versa. Here it is necessary to assume that $m$ is odd; from Table \ref{K-multiplication} and Theorem \ref{omega-prime-powers}, $\omega_K(m)=1$ if and only if $m=p_1^{e_1}\cdots p_r^{e_r}$ whenever $j\vargeq3$ and every $p_i$ has order 1.

Once again, $\omega_K(m)=2$ for all other odd $m$, as seen by Table \ref{table_even}.
\end{proof}

The limitations of Theorem \ref{mainthm-even-K} (the analog of the main theorem for even $K$) stem from the differences between Theorem \ref{table_even} (the ``multiplication table" for even $K$) and Theorem \ref{K-multiplication} (the ``multiplication table" for odd $K$). For example, in the Pell (2-Fibonacci) sequence, $\omega_2(8)=1$, $\omega_2(10)=2$, but $\omega_2(\lcm[8,10])=1$. Similarly, $\omega_2(5)=4$, but $\omega_2(\lcm[8,5])=1$. There are also pairs of even numbers where the conditions in Theorem \ref{omegaK(2)^x-odd} are satisfied, such as $\omega_2(2)=1$, $\omega_2(12)=2$, and $\omega_2(\lcm[2,12])=2$.
\begin{question}
    For even $K$ and at least one of $m,n$ even, if $\omega_K(m)=1$ and $\omega_K(n)=2$ or 4, when does $\omega_K(\lcm[m,n])=1$?
\end{question}
Analogous questions can be raised about Theorem \ref{omega-odd-K} (the theorem relating the rank to the factors of Fibonacci numbers for odd $K$) and Theorem \ref{omega-even-K} (the theorem relating the rank to the factors of Fibonacci numbers for even $K$). Theorem \ref{omega-odd-K} holds for even $K$ for moduli that are not multiples of $4$, as proven in Theorem \ref{omega-even-K}. When the modulus is a multiple of 4, it is sometimes false, and sometimes true. Conjecture \ref{even-K-exceptions} speculates about these cases.

\begin{conjecture}\label{even-K-exceptions}
    The following portions of Theorem \ref{omega-odd-K} also apply to even $K$ for all moduli $m > 3$:
    \begin{itemize}
    \item $\omega_K(m)=4$ if and only if $\alpha_K(m)\equiv\pm1\pmod{4}$.\\
    \item If $\omega_K(m)=2$, then $4\mid\pi_K(m)$ and $2\mid\alpha_K(m)$.\\
    \item If $4\not\mid\pi_K(m)$, then $\omega_K(m)=1$.
    \end{itemize}
\end{conjecture}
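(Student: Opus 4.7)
The plan is to combine the preceding tools so that the conjecture reduces to a short numerical check on $2$-adic valuations. First, I would invoke Theorem \ref{omega-even-K} to dispose of the case $4 \nmid m$ (that is, $m$ odd or $m = 2 \cdot (\text{odd})$), which already gives the full biconditional versions. The remaining work is to handle $m = 2^\gamma n$ with $\gamma \geq 2$ and $n$ odd.

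For this case, I would use Theorem \ref{omegaK(2)^x-even} to write $\pi_K(2^\gamma) = \alpha_K(2^\gamma) = 2^s$, where $s = \gamma + 1 - \min(\gamma, \nu_2(K)) \geq 1$, and then apply Theorem \ref{period-lcm} to get $\pi_K(m) = \lcm[2^s, \pi_K(n)]$ and $\alpha_K(m) = \lcm[2^s, \alpha_K(n)]$. Setting $\alpha_K(n) = 2^a b$ and $\pi_K(n) = 2^{a'} b$ with $b$ odd, Theorem \ref{omega-alpha} applied to the odd $n$ forces $a' - a = \log_2 \omega_K(n) \in \{0, 1, 2\}$, with the additional constraint $a = 0$ when $\omega_K(n) = 4$ and $a \geq 1$ when $\omega_K(n) = 2$. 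A one-line lcm computation then yields $\omega_K(m) = 2^{\max(s, a') - \max(s, a)}$.

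From this formula the three bullets fall out. Since $s \geq 1$, the quantity $\alpha_K(m) = 2^{\max(s, a)} b$ is always even, so $\alpha_K(m) \not\equiv \pm 1 \pmod 4$; hence for the first biconditional it suffices to rule out $\omega_K(m) = 4$. This would require $\max(s, a') - \max(s, a) = 2$, but the bound $\max(s, a') - \max(s, a) \leq a' - a \leq 2$ attains equality only when $s \leq a$, and since $a' - a = 2$ forces $a = 0$, $a' = 2$, this would demand $s \leq 0$, contradicting $s \geq 1$. The second bullet follows because $\omega_K(m) = 2$ requires $\max(s, a') \geq 2$, so $4 \mid \pi_K(m)$, and $\alpha_K(m)$ is even from the outset. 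The third bullet is then the contrapositive of the first two combined: if $4 \nmid \pi_K(m)$, then neither $\omega_K(m) = 2$ nor $\omega_K(m) = 4$ is possible, forcing $\omega_K(m) = 1$.

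The main obstacle is essentially organizational rather than substantive: one must carefully track the $2$-adic valuations of $\alpha_K(n)$ and $\pi_K(n)$ across all three possible values of $\omega_K(n)$ and combine them via lcm with the $2^s$ contribution from $2^\gamma$. The payoff of threading the argument through the single parameter $s$ is that it avoids the case explosion on $\gamma \leq \nu_2(K)$ vs.\ $\gamma > \nu_2(K)$ and on small $\gamma$ that complicates the proof of Theorem \ref{omega-odd-K}; once the formula $\omega_K(m) = 2^{\max(s, a') - \max(s, a)}$ is in place, every remaining verification is a one-line inequality check.
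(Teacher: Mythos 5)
You should be aware that the paper offers \emph{no} proof of this statement: it is left as Conjecture \ref{even-K-exceptions}, with the surrounding discussion only observing that the biconditional form of Theorem \ref{omega-odd-K} ``is sometimes false, and sometimes true'' when $4 \mid m$ (e.g.\ $\omega_2(8)=1$ while $4 \mid \pi_2(8)$). So there is nothing in the paper to compare your argument against; instead your proposal, if sound, would settle the conjecture. Having checked it, I believe it is sound, granting the paper's supporting results (Theorem \ref{omegaK(2)^x-even}, Theorem \ref{period-lcm}, Theorem \ref{omega-alpha}, and $\pi_K = \alpha_K\omega_K$). The reduction to $m = 2^\gamma n$ with $\gamma \geq 2$, the identity $\pi_K(2^\gamma)=\alpha_K(2^\gamma)=2^s$ with $s \geq 1$ (which holds precisely because $K$ is even, so $\nu_2(\gcd(2^\gamma,K)) \geq 1$ forces $s \leq \gamma$ while the formula gives $s \geq 1$), and the resulting formula $\omega_K(m)=2^{\max(s,a')-\max(s,a)}$ are all correct, as is the observation that $\max(s,a')-\max(s,a)=a'-a$ only when $s \leq a$, which kills the $\omega_K(m)=4$ case since $a'-a=2$ forces $a=0<s$. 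The formula also correctly reproduces the paper's own counterexamples ($\omega_2(8)=1$ with $4\mid\pi_2(8)$ corresponds to $s=3$, $a=a'=0$), confirming that your argument proves exactly the weakened one-directional statements and not their false converses.

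Two small points deserve a sentence each in a final write-up. First, the claim that $\omega_K(n)=4$ forces $a=0$ is quoted from the first bullet of Theorem \ref{omega-alpha}, which is stated only for $m>3$; you need it for all odd $n$, including $n=3$. This is harmless -- $\omega_K(3)=4$ is impossible since $(\ZZ/3\ZZ)^\ast$ has no element of order $4$ (Proposition \ref{residue-order}), or alternatively the congruence $\beta_K(n)^2 \equiv (-1)^{\alpha_K(n)} \pmod n$ derived in the proof of Theorem \ref{omega-alpha} gives $\alpha_K(n)$ odd whenever $\omega_K(n)=4$ without any restriction on $n$ -- but it should be said. Second, the assertion that $\pi_K(n)$ and $\alpha_K(n)$ share the same odd part $b$ is used silently; it follows from $\pi_K(n)=\omega_K(n)\alpha_K(n)$ together with $\omega_K(n)\mid 4$, and is worth making explicit since the whole lcm computation rests on it. With those two remarks added, your argument upgrades the conjecture to a theorem, and in fact yields the stronger structural statement that $\omega_K(m) \in \{1,2\}$ with $\alpha_K(m)$ even whenever $K$ is even and $4 \mid m$.
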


However, if $4 \mid \pi_K(m)$ and $2 \mid \alpha_K(m)$, $\omega_K(m)$ is not necessarily 2. For example, $\omega_2(8)=1$, and $4 \mid \pi_K(8)=\alpha_K(8)=8$. This also serves as a counterexample to the converse of the third part of Conjecture \ref{even-K-exceptions}, since $\omega_2(8)=1$, but $4 \mid \pi_2(8)$.

\begin{question}
    If $K$ is even and $4 \mid m$, when do the conditions $4 \mid \pi_K(m)$ and $2 \mid \alpha_K(m)$ imply that $\omega_K(m)=2$, and what other values does $\omega_K(m)$ take? Likewise, when does the condition $\omega_K(m)=1$ imply that $4 \not\mid\pi_K(m)$?
\end{question}

\section{Final Remarks}

A general binary recurrence sequence $\mathcal{U}_{n}$ is determined by the four parameters $(a,b,c,d)$ where 
\[
\mathcal{U}_0=c, \qquad \mathcal{U}_1=d, \qquad \mathcal{U}_n=a\times\mathcal{U}_{n-1}+b\times\mathcal{U}_{n-2}.
\]
When $\mathcal{U}_0=0$ and $\mathcal{U}_1=1$, these sequences are often referred to as $(a,b)$-Fibonacci sequences. Unfortunately, the techniques used in this paper will have to be adjusted when $b\neq1$. 
\begin{theorem}[Renault \cite{Renault}]
    In an $(a,b)$-Fibonacci sequence, $\omega(m)\mid2\cdot\text{ord}_m(-b)$.
\end{theorem}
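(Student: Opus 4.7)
The plan is to generalize the proofs of Lemma \ref{residue-subtraction} (residue subtraction), Proposition \ref{residue-order} (order equals order of residue), and the key identity in Theorem \ref{omega-alpha} from the $K$-Fibonacci case to the full $(a,b)$-Fibonacci setting, then combine them. Since $\text{ord}_m(-b)$ is only defined when $\gcd(b,m)=1$, I may assume this throughout.

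First, I would establish that $\omega(m) = \text{ord}_m(\beta(m))$, where $\beta(m) := F_{\alpha(m)+1} \bmod m$. The induction from Lemma \ref{residue-subtraction} carries over verbatim to the $(a,b)$-setting, since it uses only the recurrence and the initial data $F_{\alpha(m)} \equiv 0$, $F_{\alpha(m)+1} \equiv \beta(m) \pmod m$; no use is made of $b=1$. This gives $F_{r\alpha(m)+n} \equiv \beta(m)^r F_n \pmod m$, and the argument of Proposition \ref{residue-order} then identifies $\omega(m)$ with the multiplicative order of $\beta(m)$ modulo $m$.

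Next, I would derive the key identity $\beta(m)^2 \equiv (-b)^{\alpha(m)} \pmod m$. The matrix parametrization of Theorem \ref{k-fibonacci-matrix} extends naturally via
\[
U = \begin{pmatrix} a & b \\ 1 & 0 \end{pmatrix}, \qquad U^n = \begin{pmatrix} F_{n+1} & bF_n \\ F_n & bF_{n-1} \end{pmatrix},
\]
and since $\det(U)=-b$, taking determinants produces the Cassini-like identity $F_n^2 - F_{n+1}F_{n-1} = (-b)^{n-1}$. Evaluating at $n = \alpha(m)$ and reducing modulo $m$ yields $\beta(m)\cdot F_{\alpha(m)-1} \equiv (-1)^{\alpha(m)} b^{\alpha(m)-1} \pmod m$. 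Simultaneously, the recurrence $F_{\alpha(m)+1} = aF_{\alpha(m)} + bF_{\alpha(m)-1}$ gives $\beta(m) \equiv bF_{\alpha(m)-1} \pmod m$. Eliminating $F_{\alpha(m)-1}$ between these two congruences yields $\beta(m)^2 \equiv (-b)^{\alpha(m)} \pmod m$, and as a byproduct confirms $\gcd(\beta(m),m)=1$ since $\gcd(b,m)=1$, which is what was needed to apply the residue-order step.

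With both ingredients in hand, the conclusion is immediate: letting $d = \text{ord}_m(-b)$ and raising the key identity to the $d$th power yields
\[
\beta(m)^{2d} \equiv \bigl((-b)^{d}\bigr)^{\alpha(m)} \equiv 1 \pmod m,
\]
so $\omega(m) = \text{ord}_m(\beta(m))$ divides $2d = 2\cdot\text{ord}_m(-b)$. The only real obstacle is the bookkeeping of the extra factors of $b$ when porting the preliminary lemmas across, in particular confirming that $\beta(m)$ is a unit modulo $m$ so that Proposition \ref{residue-order}'s argument is legitimate; once this is handled, the rest is routine algebraic manipulation. The degenerate moduli $m\in\{1,2\}$ can be verified directly.
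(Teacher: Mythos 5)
Your argument is correct: the paper itself imports this theorem from Renault without proof, and what you have written is the natural generalization of the paper's own machinery for the $b=1$ case --- Lemma \ref{residue-subtraction}, Proposition \ref{residue-order}, and the identity $\beta_K(m)^2\equiv(-1)^{\alpha_K(m)}$ from Theorem \ref{omega-alpha} --- carried over with the correct bookkeeping of powers of $b$ (the Cassini-type identity $F_n^2-F_{n+1}F_{n-1}=(-b)^{n-1}$ and the elimination step giving $\beta(m)^2\equiv(-b)^{\alpha(m)}$ both check out, and you rightly note that $\gcd(\beta(m),m)=1$ must be secured from this identity before the residue-order argument is applied). The only point worth making explicit is that $\gcd(b,m)=1$ is also what guarantees the $(a,b)$-sequence is purely periodic modulo $m$, so that $\alpha(m)$ exists and the zeros of the period are exactly the multiples of $\alpha(m)$; with that remark the proof is complete and matches the approach the paper relies on throughout.
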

\noindent In $K$-Fibonacci sequences, $b=1$ and $\omega(m)\mid4$. However, $\text{ord}_m(-b)$ takes infinitely many values as $b$ varies. 

There is one particular case where the behavior of $\text{ord}_m(-b)$ is well known, namely if $b=-1$. In this case, $\text{ord}_m(-b)=1$ and $\omega(m)\mid2$. There are five degenerate cases: $a=\pm2, \pm1,0$. These occur when the ratio of the roots of the characteristic polynomial is a root of unity or if $a^2+4b=0$. If $a=1$, then $a^2-4b=-3$, and the sequence is a repeating six-term cycle: $0, 1, 1, 0, -1, -1, \ldots$. For all $m>2$, $\omega(m)_{(1,-1)}=2$, and for $m=2$, $\omega(m)_{(1,-1)}=1$. Similarly, if $a=-1$, then $a^2-4b=-3$, and the sequence is a repeating three-term cycle: $0, 1, -1,\ldots$. For all $m>1$, $\omega_{(-1,-1)}=1$. If instead, $a=2$, then $a^2+4b=0$ and the sequence is exactly: $0, 1, 2, 3, 4, 5, \ldots$, hence for every positive integer $m>1$, $\omega_{(2,-1)}(m)=1$. Somewhat similarly, if $a=-2$, then $a^2+4b=0$ and the sequence is exactly: $0, 1, -2, 3, -4, 5, \ldots$, hence, $\omega_{(-2,-1)}(m)=1$ for all $m>1$. And if $a=0$, then $a^2-4b=-4$, and the sequence is a repeating four-term cycle: $0, 1, 0, -1, \ldots$, hence, $\omega_{(0,-1)}(m)=2$ for all $m>2$ and $\omega_{(0,-1)}(m)=1$ if $m=2$.

\begin{conjecture}\label{negativemult}
The multiplication table in Theorem \ref{K-multiplication} also applies to $(a,b)$-Fibonacci sequences when $b=-1$, in a slightly altered form:
\begin{table}[H]
\centering
\begin{tabular}{cc|cc}
&&&$\omega(m)$\\
&&&\\
&&1&2\\
\hline
&&&\\
&1&1&2\\
&&&\\
$\omega(n)$&2&2&2
\end{tabular}
\caption{Table of $\omega_{(a,-1)}\left(\text{lcm}[m,n]\right)$.}\label{VinsonTableNegative}
\end{table}
\end{conjecture}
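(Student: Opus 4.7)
The plan is to adapt the proof template of Theorem \ref{K-multiplication} to the $(a,-1)$ setting, using that $\omega(m) \mid 2 \cdot \text{ord}_m(1) = 2$ by Theorem \ref{(a,b)-order}, so $\omega(m) \in \{1,2\}$ always. Renault's extension of the lcm and product identities (Theorems \ref{period-product} and \ref{period-lcm}) to arbitrary $(a,b)$-Fibonacci sequences gives $\pi(m) = \alpha(m)\omega(m)$, $\pi(\lcm[m,n]) = \lcm[\pi(m),\pi(n)]$, and $\alpha(\lcm[m,n]) = \lcm[\alpha(m),\alpha(n)]$; these are the only passage-to-lcm tools I expect to need.

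The two diagonal entries of the table fall out immediately. When $\omega(m) = \omega(n) = 1$, both periods equal their ranks and $\pi(\lcm[m,n]) = \lcm[\alpha(m),\alpha(n)] = \alpha(\lcm[m,n])$, forcing $\omega(\lcm[m,n]) = 1$. When $\omega(m) = \omega(n) = 2$, the common factor of $2$ in both periods survives the lcm and forces $\omega(\lcm[m,n]) = 2$. The real content is the off-diagonal case $\omega(m) = 1$ and $\omega(n) = 2$: comparing $\lcm[\alpha(m),2\alpha(n)]$ with $2\,\lcm[\alpha(m),\alpha(n)]$ via $2$-adic valuations shows $\omega(\lcm[m,n]) = 2$ is equivalent to $\nu_2(\alpha(m)) \leq \nu_2(\alpha(n))$. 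The conjecture therefore reduces to the structural claim that whenever $\omega(m) = 1$, $\alpha(m)$ is odd.

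I would prove the structural claim prime by prime and glue via the Chinese Remainder Theorem. For an odd prime $p$, consider the matrix $M = \begin{pmatrix} a & -1 \\ 1 & 0 \end{pmatrix}$, whose powers satisfy $M^{\alpha(m)} = \beta(m)I$ and which has $\det M = 1$. If $\omega(p) = 1$ and $\alpha(p) = 2k$, then $M^k$ would be an involution in $GL_2(\mathbb{F}_p)$ with determinant $1$ and not equal to $I$ (the order of $M$ is $2k$) nor to $-I$ (that would force $U_k \equiv 0 \pmod p$, contradicting $\alpha(p) = 2k$); but over $\mathbb{F}_p$ for $p$ odd every such involution with determinant $1$ is forced to have eigenvalues $\{+1,-1\}$ and thus determinant $-1$, a contradiction. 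The bound then lifts from $p$ to $p^e$ via the analog of Theorem \ref{lifting-exponent-rank}, which preserves $\nu_2(\alpha)$. The case $p = 2$ with $a$ odd is explicit: $U_3 = a^2 - 1$ together with $8 \mid a^2 - 1$ forces $\alpha(2^k) = 3$ whenever $\omega(2^k) = 1$. A CRT decomposition $\beta(m) \equiv \beta(p^e)^{\alpha(m)/\alpha(p^e)} \pmod{p^e}$ then forces every prime power of $m$ achieving the maximum $\nu_2(\alpha(p^e))$ to satisfy $\omega(p^e) = 1$, so $\nu_2(\alpha(m)) = 0$.

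The main obstacle is the case $p = 2$, which is also where the conjecture's implicit hypothesis reveals itself. When $a$ is even the structural claim fails outright: in $(4,-1)$ one finds $\omega(4) = 1$ with $\nu_2(\alpha(4)) = 1$, producing $\omega(6) = 1$ even though $\omega(3) = 2$. Conjecture \ref{negativemult} should therefore be read with $a$ odd (paralleling the odd-$K$ hypothesis of Theorem \ref{K-multiplication}), and a companion analysis in the spirit of Theorem \ref{table_even}, presumably depending on $\nu_2(a)$, would be needed to describe the even-$a$ case.
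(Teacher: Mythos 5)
First, a point of comparison: the paper states this as Conjecture \ref{negativemult} in its Final Remarks and offers no proof, so there is no argument of the authors to measure yours against; what follows is an assessment of your proposal on its own terms.

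Your architecture is sound and, for odd $a$, nearly complete. The reduction is correct: $\det\begin{pmatrix} a&-1\\1&0\end{pmatrix}=1$ gives $\beta(m)^2\equiv1\pmod m$, the two diagonal entries follow from the lcm identities, and the off-diagonal entry reduces exactly to $\nu_2(\alpha(m))\le\nu_2(\alpha(n))$, for which it suffices that $\omega(m)=1$ force $\alpha(m)$ odd. Your $GL_2(\mathbb{F}_p)$ involution argument proves this cleanly for odd primes, and the CRT/lifting glue is fine. There are, however, two concrete problems. First, the case $p=2$ with $a$ odd is not actually handled: $8\mid a^2-1$ only gives $\alpha(2^k)=3$ for $k\le 3$. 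For larger $k$ the rank can be $3\cdot 2^j$ with $j\ge1$ (e.g.\ $a=3$ gives $\alpha(16)=6$ and $\alpha(32)=12$), and since $GL_2(\mathbb{Z}/2^k)$ contains determinant-one involutions other than $\pm I$, your matrix argument does not transfer; you must separately show that $\omega(2^k)=2$ whenever $2\mid\alpha(2^k)$, for instance via $U_{2\rho+1}=U_{\rho+1}^2-U_\rho^2$ and tracking $\nu_2(U_\rho)$ in the style of the paper's Theorem \ref{omegaK(2)^x-even}. Second, your even-$a$ counterexample is mis-stated: in $(4,-1)$ one has $U_2=4$ and $U_3=15\equiv3\pmod 4$, so $\omega(4)=2$, not $1$. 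The salvage is that $\omega(2)=1$ with $\alpha(2)=2$, which already yields $\omega(6)=1$ against $\omega(3)=2$; a counterexample avoiding the modulus $2$ entirely is $(6,-1)$, where $\omega(4)=1$ and $\omega(3)=2$ yet $\omega(12)=1$. So your central critique --- that the table as literally stated fails for even $a$ and must carry a parity hypothesis paralleling Theorem \ref{K-multiplication} --- is correct and is the most valuable part of the proposal; fix the supporting computation and close the $2$-power gap and you would have a genuine resolution of (a corrected form of) the conjecture.
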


There are three other degenerate cases in total for $(a,b)$-Fibonacci sequences, namely when $(a,b)$ equals $(0,1)$ or $(1,0)$ or $(-1,0)$. When $(a,b)=(0,1)$ and when $(a,b)=(1,0)$, the sequence becomes $0, 1, 1, 1, 1, \ldots$. Then, for all $m>1$, $\omega_{(1,0)}=1$. When $(a,b)=(-1,0)$, the sequence becomes $0, 1, 0, -1, 0, 1, 0, -1, \ldots$. Then, for all $m>2$, $\omega_{(-1,0)}=2$. If $m=2$, then $\omega_{(-1,0)}=1$.

Interestingly, the order is sometimes well-behaved for certain other Fibonacci sequences, indicating that more restrictions may be found on the order. For example, the order of the $(3,4)$-Fibonacci sequence modulo $m$ is always 0, 1, or 2 for $m < 20000$. If the sequence is eventually periodic but the period contains no multiples of $m$, some (such as Fiebig, Mbirika, and Spilker \cite{FMS}) define the order to not exist. For illustrative purposes, define the order to be zero if the sequence is eventually periodic modulo $m$ and this period contains no multiples of $m$; this appears to occur if and only if $m$ shares a common factor with either $a$ or $b$. For $m < 20000$, the sets of numbers with order 0 and 2, respectively, were each closed under multiplication (similar to OEIS Conjectures \ref{conj:oeis4} and \ref{conj:oeis1}), while the set of numbers with order 1 was not. However, in the $(3,5)$-Fibonacci sequence, the order takes on 20 distinct values for $m < 100$ (and 112 distinct values for $m < 1000$), indicating that the range of the $(3,5)$-order is likely infinite. Putting everything together, it is possible to speculate about the range of \textit{all} $(a,b)$-Fibonacci sequences.%These observations open the door to further inquiry.

\begin{conjecture}\label{finite-orders}
For an $(a,b)$-Fibonacci sequence, the range of $\omega_{(a,b)}(m)$ for $m>1$ is given by
\begin{enumerate}[label=(\roman*)]
    \item $\{1\}$ if $(a,b) \in \{(0,1),(1,0),(-1,-1),(\pm2,-1)\}$, \\
    \\
    \indent \ \ or if $m=2$ and $(a,b) \in \{(-1,0),(1,-1), (0,-1)\}$,\\
    \item \{2\} if $m>2$ and $(a,b) \in \{(-1,0),(1,-1), (0,-1)\}$,\\
    \item $\{1,2,4\}$ if $b=1$ and $a\neq0$,\\
    \item $\{1,2\}$ if $b=-1$ and $a\not\in\{-2,-1,0,1,2\}$,\\
    \item $\{0,1,2\}$ if $b \neq \pm1$ and $|a|-|b| = 1$,\\
    \item infinite in all other cases.
\end{enumerate}
\end{conjecture}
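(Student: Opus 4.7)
The plan is to dispose of the six clauses of Conjecture \ref{finite-orders} in roughly increasing difficulty. Clauses (i) and (ii) collect exceptional pairs $(a,b)$ all satisfying $|a^2+4b|\varleq 4$, so the characteristic polynomial is degenerate, and the sequence modulo $m$ has been written out as a short explicit cycle in the paragraph preceding the conjecture; for each pair, $\omega_{(a,b)}(m)$ is read off by direct inspection. Clause (iii) is Theorem \ref{main_theorem} together with Theorem \ref{mainthm-even-K}, which show $\omega_K(m)\in\{1,2,4\}$; all three values are attained via $\omega_K(2)=1$, $\omega_K(8)=2$ for odd $K$ (with an analogous witness for even $K$ from Table \ref{table_even}), and $\omega_K(p)=4$ for any prime $p$ with $\alpha_K(p)\equiv\pm1\pmod 4$, whose existence follows from the Bilu--Hanrot--Voutier primitive prime divisor theorem. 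Clause (iv) is immediate from Theorem \ref{(a,b)-order}: at $b=-1$, $\text{ord}_m(-b)=\text{ord}_m(1)=1$, forcing $\omega_{(a,-1)}(m)\mid 2$, and both values occur on small numerical examples.

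Clause (v) is the first substantive new case. The plan is to exploit the factorization of $x^2-ax-b$ forced by the hypothesis. In the principal subcase where $a^2+4b$ is a perfect square (e.g.\ $a>0$, $b=1-a$ gives discriminant $(a-2)^2$), the polynomial splits as $(x-(a-1))(x-1)$ and yields the closed form $F_n=((a-1)^n-1)/(a-2)$; divisibility of $F_n$ by $m$ then reduces to $(a-1)^n\equiv 1\pmod{d}$ for a modulus $d$ derived from $m$ and $a-2$, from which a short case analysis on $\gcd(a-1,m)$ and $\gcd(a-2,m)$ gives $\omega\in\{0,1,2\}$. The remaining subcases, in which $a^2+4b$ is not a perfect square, require separate treatment and may expose an implicit sign restriction in the conjecture.

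Clause (vi), asserting that the range is actually infinite in every remaining case, is the main obstacle. My plan is first to extend Proposition \ref{residue-order} to the general $(a,b)$-setting, obtaining a formula for $\omega(m)$ as the multiplicative order of a specific residue class encoding both $\beta(m)$ and $-b$. Combined with Theorem \ref{(a,b)-order}, this pins down $\omega(p)$ up to a divisor in a finite universal set, and the infinitude of the range then reduces to the classical fact that $\text{ord}_p(-b)$ is unbounded over primes $p$ whenever $-b$ has infinite order in $\QQ^\times$ (i.e.\ $|b|\vargeq 2$). The hardest step will be ruling out systematic collapse of $\omega(p)$ to a small divisor of $2\,\text{ord}_p(-b)$; this likely requires a Chebotarev-type density argument controlling the splitting of $x^2-ax-b$ in the cyclotomic tower attached to the prescribed orders of $-b$. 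A more elementary alternative, suggested by the author's numerical evidence on the $(3,5)$-sequence, is to exhibit an explicit infinite family of prime moduli realizing pairwise distinct orders.
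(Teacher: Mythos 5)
This statement is Conjecture \ref{finite-orders}: the paper offers no proof of it, only the explicit degenerate-case computations in the Final Remarks and numerical evidence for the $(3,4)$- and $(3,5)$-sequences, so there is no argument of the authors' to compare yours against. Judged on its own terms, your proposal genuinely closes only clauses (i)--(iv). For (i) and (ii) you are reading off the explicit cycles the paper already writes down, and for (iii) and (iv) the containments of the range in $\{1,2,4\}$ and $\{1,2\}$ follow from Theorem \ref{(a,b)-order} exactly as you say. The only care needed there is attainment: ``both values occur on small numerical examples'' is not a proof for \emph{every} admissible $a$, whereas your Bilu--Hanrot--Voutier route (a primitive prime divisor $p$ of $F_{K,n}$ has $\alpha_K(p)=n$, so choosing $n$ odd or $n\equiv 0\pmod{4}$ and invoking Theorem \ref{K-wyler} realizes the values $4$ and $2$) does work and should be applied uniformly in (iii) and adapted to (iv).

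The genuine gaps are in (v) and (vi), which are the actual content of the conjecture, and your own text concedes both. In (v) your closed form $F_n=((a-1)^n-1)/(a-2)$ exists only when $x^2-ax-b$ splits over $\ZZ$, i.e.\ when $\pm1$ is a root ($b=1-a$ or $b=a+1$); the hypothesis $|a|-|b|=1$ also admits non-split pairs such as $(3,2)$ and $(-3,2)$, for which you have no argument, while it \emph{excludes} the paper's own motivating example $(3,4)$, where $|a|-|b|=-1$. So either the clause is misstated or your method cannot cover it; ``may expose an implicit sign restriction'' is an observation, not a resolution. In (vi) the entire burden is the non-collapse step: Theorem \ref{(a,b)-order} only bounds $\omega(p)$ by a divisor of $2\,\text{ord}_p(-b)$, and unboundedness of $\text{ord}_p(-b)$ over primes does not by itself prevent $\omega(p)$ from always landing on a bounded divisor. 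You name a Chebotarev-type argument or an explicit family of prime moduli as possible remedies but prove neither. In short, the proposal is a reasonable research plan that formalizes what the paper already establishes for (i)--(iv), but it does not prove the conjecture, and as literally stated clause (v) is likely false.
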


\begin{conjecture}
    Choose an $(a,b)$-Fibonacci sequence that falls under (vi) of Conjecture \ref{finite-orders}. For all natural numbers $n$, there exists infinitely many positive integers $m$ such that $\omega_{(a,b)}(m)=n$.
\end{conjecture}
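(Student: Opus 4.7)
The plan is to reduce the conjecture to constructing, for each natural number $n$, infinitely many primes $p$ with $\omega_{(a,b)}(p)=n$, and then bootstrap to composite moduli via a product construction. The case $n=0$ is handled immediately: any $m$ sharing a common factor with $ab$ makes the sequence eventually periodic with no multiples of $m$, so $\omega_{(a,b)}(m)=0$ for infinitely many $m$ whenever $|a|,|b|>1$.

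The reduction to primes relies on a Binet-style description. For a prime $p\nmid b(a^2+4b)$, let $\alpha,\beta\in\overline{\mathbb{F}_p}$ be the roots of $x^2-ax-b$. One checks that $\pi_{(a,b)}(p)=\mathrm{lcm}(\mathrm{ord}(\alpha),\mathrm{ord}(\beta))$ and $\alpha_{(a,b)}(p)=\mathrm{ord}(\alpha/\beta)$, so
\[
\omega_{(a,b)}(p)=\frac{\mathrm{lcm}(\mathrm{ord}(\alpha),\mathrm{ord}(\beta))}{\mathrm{ord}(\alpha/\beta)}
\]
is an explicit function of multiplicative orders in $\overline{\mathbb{F}_p}^\times$. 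Since $\alpha\beta=-b$, this yields the divisibility $\omega_{(a,b)}(p)\mid 2\,\mathrm{ord}_p(-b)$ already observed in the paper.

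Given $n$, I would set a target exponent $t=\mathrm{lcm}(n,2)$ and apply Chebotarev's density theorem to the splitting field of $\Phi_t(-b)$, viewed inside $\mathbb{Q}(\zeta_t,\sqrt{a^2+4b})$, to obtain infinitely many primes $p$ for which $-b$ has multiplicative order exactly $t$ modulo $p$. A refined Frobenius condition in $\mathrm{Gal}(\mathbb{Q}(\zeta_{nt},\alpha)/\mathbb{Q})$ would then force the ratio defining $\omega_{(a,b)}(p)$ to equal exactly $n$. The hypotheses of case (vi)---namely $b\neq\pm1$ together with the exclusion of the degenerate slope $|a|-|b|=1$---guarantee that these Galois extensions are nontrivial and that $-b$ has unbounded multiplicative order as $p$ varies, so Chebotarev furnishes a positive density of primes with the desired property. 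To pass from primes to infinitely many composite $m$, I would invoke the expected $(a,b)$-generalization of Theorem~\ref{K-multiplication}: multiplying a chosen $p$ by any coprime prime $q$ with $\omega_{(a,b)}(q)=1$ preserves $\omega_{(a,b)}(pq)=n$, and infinitely many such $q$ exist by Dirichlet applied in the appropriate residue classes.

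The main obstacle is establishing \emph{equality} $\omega_{(a,b)}(p)=n$ rather than mere divisibility, since the elementary bound only controls $\omega(p)\mid 2\,\mathrm{ord}_p(-b)$. For each proper divisor $d$ of $n$, the set of primes with $\omega_{(a,b)}(p)=d$ has a computable Chebotarev density, and the proof reduces to verifying that the sum of these densities over all $d\mid n$, $d<n$, is strictly less than the density of the candidate set defined by $\omega(p)\mid n$. This is analogous to Hasse-style problems on the density of primes with prescribed multiplicative order of a fixed algebraic integer, and is likely where the deepest input---possibly a conditional GRH assumption or a direct inclusion-exclusion on cyclotomic subfields---must enter.
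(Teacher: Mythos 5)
First, note that the paper offers no proof of this statement: it is the closing conjecture and is left open, so there is nothing in the paper to compare your argument against; it must stand on its own. It does not.

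The central step of your plan is unworkable as stated. You fix a target $t=\lcm(n,2)$ and propose to use Chebotarev to produce infinitely many primes $p$ with $\mathrm{ord}_p(-b)$ \emph{exactly equal} to $t$. But for a fixed integer $g=-b$ with $|b|\vargeq 2$ (which is exactly the situation in case (vi), where $b\neq\pm1$) and a fixed exponent $t\vargeq1$, any prime $p$ with $\mathrm{ord}_p(g)=t$ must divide $g^t-1$, which is a fixed nonzero integer; hence there are only \emph{finitely} many such primes. Your own remark that ``$-b$ has unbounded multiplicative order as $p$ varies'' is precisely the reason this fails: as $p\to\infty$ the order $\mathrm{ord}_p(-b)$ escapes every fixed bound, so no fixed value of it is attained infinitely often. (Relatedly, ``the splitting field of $\Phi_t(-b)$'' is not a well-formed object, since $\Phi_t(-b)$ is an integer; the Kummer fields $\QQ(\zeta_t,(-b)^{1/t})$ that appear in Artin-type problems control the \emph{index} $[\mathbb{F}_p^\times:\langle -b\rangle]$, not the order itself.) To have any hope, the argument would need to prescribe the ratio $\lcm(\mathrm{ord}(\alpha),\mathrm{ord}(\beta))/\mathrm{ord}(\alpha/\beta)$ while letting the individual orders grow with $p$ --- a genuinely Artin-flavored index problem, which is presumably why the authors leave the statement as a conjecture.

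There are secondary gaps as well. The bootstrapping step from primes to composites invokes an ``expected $(a,b)$-generalization'' of the multiplication table of Theorem \ref{K-multiplication}; the paper establishes that table only for $b=1$ (and conjectures a version for $b=-1$ in Conjecture \ref{negativemult}), and its own Table for even $K$ already shows that combining a modulus of order $1$ with one of order $4$ can drop the order to $1$ or $2$, so ``multiplying by a prime of order $1$ preserves the order'' is not something you may assume. Finally, your disposal of the case $n=0$ rests on the paper's empirical observation about when the order vanishes, which is itself not proved and in any case is not obviously relevant to sequences in case (vi). The overall shape of the reduction (Binet description over $\overline{\mathbb{F}_p}$, then a density argument) is a reasonable way to think about the problem, but as written the proposal does not constitute a proof and its key step produces only finitely many primes.
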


%\begin{question}
%    For which recurrence sequences does the order range over finitely many values? What can be said about the behavior of the order in general recurrence sequences?
%\end{question}

\section*{Acknowledgments}
The authors thank aBa Mbirika for thoughtful discussion at the 21\textsuperscript{th} International Fibonacci Conference that contributed to the proof of Theorem \ref{omegaK(2)^x-even}, and to the anonymous referee, whose insightful comments greatly improved the manuscript.

\bibliographystyle{plain}
\bibliography{fiborder}{}

\begin{thebibliography}{10}

\bibitem{Bouazzaoui}
Z.~Bouazzaoui.
\newblock {F}ibonacci numbers and real quadratic p-rational fields.
\newblock {\em Periodica Math. Hungarica}, 81(1):123--133, 2020.

\bibitem{Bouazzaoui_2}
Z.~Bouazzaoui.
\newblock On periods of {F}ibonacci sequences and real quadratic p-rational fields.
\newblock {\em Fibonacci Quart.}, 58(5):103--110, 2020.

\bibitem{Carmichael}
R.~D. Carmichael.
\newblock On the numerical factors of the arithmetic forms $\alpha$ n$\pm$$\beta$ n.
\newblock {\em Ann. Math.}, 15(1/4):49--70, 1913.

\bibitem{cerda-morales}
G.~Cerda-Morales.
\newblock On generalized {F}ibonacci and {L}ucas numbers by matrix methods.
\newblock {\em Hacettepe J. Math.}, 42(2):173--179, 2013.

\bibitem{Daykin}
D.~E. Daykin and L.~A.~G. Dresel.
\newblock Factorization of {F}ibonacci numbers.
\newblock {\em Fibonacci Quart.}, 8(1):23--30, 1970.

\bibitem{[Everest]}
G.~Everest, A.~van~der Poorten, I.~Sharplinski, and T.~Ward.
\newblock {\em Recurrence Sequences}.
\newblock Rhode Island, USA: American Mathematical Society, 2004.

\bibitem{FMS}
M.~Fiebig, a.~Mbirika, and J.~Spilker.
\newblock Period patterns, entry points, and orders in the {L}ucas sequences: theory and applications.
\newblock {\em preprint}, 2024.

\bibitem{Gupta}
S.~Gupta, P.~Rockstroh, and F.~E. Su.
\newblock Splitting fields and periods of {F}ibonacci sequences modulo primes.
\newblock {\em Math. Magazine}, 85(2):130--135, 2012.

\bibitem{Harrington}
J.~Harrington and L.~Jones.
\newblock A note on generalised {W}all--{S}un--{S}un primes.
\newblock {\em Bull. Australian Math. Soc.}, 108(3):373--378, 2023.

\bibitem{Jones2023}
L.~Jones.
\newblock Generalized {W}all-{S}un-{S}un primes and monogenic power-compositional trinomials.
\newblock {\em Albanian J. Math.}, 17(2), 2023.

\bibitem{Jones2024}
L.~Jones.
\newblock A new condition for $ k $-{W}all--{S}un--{S}un primes.
\newblock {\em Taiwanese J. Math.}, 28(1):17--28, 2024.

\bibitem{Lagrange}
J.~L. Lagrange.
\newblock {\OE}uvres de {L}agrange. {G}authier-{V}illars, {P}aris. vol. 1--14, publi{\'e}es par les soins de m. j.
\newblock {\em A. Serret, sous les auspices de Son Excellence le ministre de l’instruction publique}, 1867.

\bibitem{Renault}
M.~Renault.
\newblock The {F}ibonacci sequence under various moduli.
\newblock {\em A Masters Thesis, Wake Forest Univ.}, 1996.

\bibitem{Renault2}
M.~Renault.
\newblock The period, rank, and order of the (a, b)-{F}ibonacci sequence mod $m$.
\newblock {\em Math. Magazine}, 86(5):372--380, 2013.

\bibitem{[A053029]}
N.~J.~A. Sloane and The OEIS~Foundation Inc.
\newblock The on-line encyclopedia of integer sequences {A}053029.
\newblock http://oeis.org/A053029, 2024.

\bibitem{[A053030]}
N.~J.~A. Sloane and The OEIS~Foundation Inc.
\newblock The on-line encyclopedia of integer sequences {A}053030.
\newblock http://oeis.org/A053030, 2024.

\bibitem{[A053031]}
N.~J.~A. Sloane and The OEIS~Foundation Inc.
\newblock The on-line encyclopedia of integer sequences {A}053031.
\newblock http://oeis.org/A053031, 2024.

\bibitem{Sun}
Z.-W. Sun.
\newblock Fibonacci numbers and {F}ermat's last theorem.
\newblock {\em Acta Arith.}, 60(4):371--388, 1992.

\bibitem{Vinson}
J.~Vinson.
\newblock The relation of the period modulo $m$ to the rank of apparition of $m$ in the {F}ibonacci sequence.
\newblock {\em Fibonacci Quart.}, 1(2):37--45, 1963.

\bibitem{Wall}
D.~D. Wall.
\newblock Fibonacci series modulo $m$.
\newblock {\em Amer. Math. Monthly}, 67(6):525--532, 1960.

\bibitem{Williams}
H.C. Williams.
\newblock A note on the {F}ibonacci quotient {F}p-$\varepsilon$/p.
\newblock {\em Canad. Math. Bull.}, 25(3):366--370, 1982.

\bibitem{Wyler}
O.~Wyler.
\newblock On second-order recurrences.
\newblock {\em Amer. Math. Monthly}, 72(5):500--506, 1965.

\end{thebibliography}

\end{document}